\author{Fosco Loregian$^\ddag$}
\address{ 
  \noindent $^\ddag$Tallinn University of Technology,\newline %
  Institute of Cybernetics, Akadeemia tee 15/2, \newline %
  12618 Tallinn, Estonia \newline
  \url{fosco.loregian@taltech.ee}
}
\author{Théo de Oliveira Santos$^\S$}
\address{%
    \noindent $^\S$Universidade de São Paulo,\newline
    Instituto de Ciências Matemáticas e de Computação,\newline
    Av.\ Trab.\ São Carlense, 400,\newline
    13566-590 São Carlos, Brasil\newline
    \url{theo.de.oliveira.santos@usp.br}
}
\thanks{This research was supported by the ESF funded Estonian IT Academy research measure (project 2014-2020.4.05.19-0001). The first author would like to thank A. Santamaria and his delightful talk at ItaCa \cite{alessiotalk}, without which this paper would probably not exist, the entire Italian community of category theorists that has made ItaCa possible, and \emph{sensei} Inoue Takehiko.}
\thanks{%
    The second author is greatly indebted to many people for their immense generosity and kindness, including their parents, friends, as well as
    Jonathan Beardsley,
    Lennart  Meier,
    Igor     Mencattini,
    Oziride  Manzoli Neto,
    and
    Eric     Peterson,
    among so many others who I've had the pleasure of knowing. The second author was supported by grant \#2020/02861-7, São Paulo Research Foundation (FAPESP).
}
\title{Coends of Higher Arity}
\tikzset{double line with arrow/.style args={#1,#2}{decorate,decoration={markings,%
					mark=at position 0 with {\coordinate (ta-base-1) at (0,1pt);
							\coordinate (ta-base-2) at (0,-1pt);},
					mark=at position 1 with {\draw[#1] (ta-base-1) -- (0,1pt);
							\draw[#2] (ta-base-2) -- (0,-1pt);
						}}}}
\tikzset{ Equals/.style={-,double line with arrow={-,-}}
        , hypercube/.style={inner sep=.25pt,font=\scriptsize}}
\newcommand{\tk}[1]{\begin{tikzcd}[ampersand replacement=\&]
		#1
	\end{tikzcd}}
\newcommand{\xlongdashrightarrow}[1]{%
	\mathrel{%
		\mkern-15.25mu
		\begin{tikzcd}[row sep=2.7em, column sep=1.8em, ampersand replacement=\&]
			{}
			\arrow[r, dashed, "#1"] \&
			{}
		\end{tikzcd}
		\mkern-16.25mu
	}
}
\tikzset{mid vert/.style={/utils/exec=\tikzset{every node/.append style={outer sep=0.8ex}},
			postaction=decorate,decoration={markings,
					mark=at position 0.5 with {\draw[-] (0,#1) -- (0,-#1);}}},
	mid vert/.default=0.75ex}
\let\pto\proarrow
\let\longrightrightarrows\relax
\newcommand{\longrightrightarrows}{%
	\begin{tikzcd}[row sep=2.7em, column sep=1.35em, ampersand replacement=\&,cramped,{>={Stealth[round,length=4pt,width=4.95pt,inset=2.75pt]}}]
		{}
		\arrow[r, shift left=0.625]
		\arrow[r, shift right=0.625] \&
		{}
	\end{tikzcd}
}
\let\xto\xlongrightarrow
\newcommand{\xloongtwoheadsrightarrow}[1]{%
	\mathrel{%
		\mkern-15.25mu
		\begin{tikzcd}[row sep=3.6em, column sep=2.7em, ampersand replacement=\&]
			{}
			\arrow[r, "#1",two heads]
			\&
			{}
		\end{tikzcd}
		\mkern-16.25mu
	}
}
\newcommand{\xloongrightarrow}[1]{%
	\mathrel{%
		\mkern-15.25mu
		\begin{tikzcd}[row sep=3.6em, column sep=2.7em, ampersand replacement=\&]
			{}
			\arrow[r, "#1"] \&
			{}
		\end{tikzcd}
		\mkern-16.25mu
	}
}
\newcommand{\longtwoheadsrightarrow}{%
	\mathrel{%
		\mkern-15.25mu
		\begin{tikzcd}[row sep=2.7em, column sep=1.35em, ampersand replacement=\&]
			{}
			\arrow[r, two heads] \&
			{}
		\end{tikzcd}
		\mkern-16.25mu
	}
}
\newcommand{\longhookrightarrow}{%
	\mathrel{%
		\mkern-15.25mu
		\begin{tikzcd}[row sep=2.7em, column sep=1.35em, ampersand replacement=\&]
			{}
			\arrow[r, hook] \&
			{}
		\end{tikzcd}
		\mkern-16.25mu
	}
}
\newcommand{\din}{%
	\mathrel{%
		\begin{tikzpicture}[baseline=-0.58ex]
			\draw[line width=.425pt] (0,.035) -- ++(.45,0);
			\draw[line width=.425pt] (0,-.035) -- ++(.45,0);
			\draw[fill=white] (.45,.0) circle (2.25pt);
		\end{tikzpicture}
	}
}
\tikzset{
	din/.style={
			double distance=0.2em,
			decoration={
					markings,
					mark={
							at position #1
							with {
									\draw[fill=backgroundColor] circle [radius=2pt];
								}
						}
				},
			postaction=decorate
		},
	double,
	din/.default=0em
}
\newcommand{\xlongertwoheadrightarrow}[1]{%
	\mathrel{%
		\mkern-15.25mu
		\begin{tikzcd}[row sep=3.6em, column sep=1.8em, ampersand replacement=\&]
			{}
			\arrow[r, "#1", two heads] \&
			{}
		\end{tikzcd}
		\mkern-16.25mu
	}
}
\let\xlongertwoheadsrightarrow\xlongertwoheadrightarrow
\DeclareRobustCommand{\longrightleftrightarrows}{%
	\mathrel{%
		\mkern-15.25mu
		\begin{tikzcd}[row sep=2.7em, column sep=1.35em, ampersand replacement=\&]
			{}
			\arrow[r, shift left=1.5]
			\arrow[r, leftarrow]
			\arrow[r, shift right=1.5]
			\&
			{}
		\end{tikzcd}
		\mkern-16.25mu
	}
}
\definecolor{FancyRed}{RGB}{199,10,41}
\definecolor{FancyGreen}{RGB}{36,135,48}
\definecolor{FancyBlue}{RGB}{0,22,166}
\definecolor{FancyDarkBlue}{RGB}{95,30,196}
\definecolor{veryLightGray}{RGB}{240,240,240}
\definecolor{LightRed}{rgb}{0.980,0.407,0.400}
\definecolor{backgroundColor}{rgb}{1,1,1}
\definecolor{linkBrownRed}{rgb}{0.6,0.12156862745,0}
\definecolor{lightGray}{rgb}{0.8,0.8,0.8}
\definecolor{grayY}{rgb}{0.95,0.95,0.95}
\definecolor{grayN}{rgb}{1,1,1}
\definecolor{OIvermillion}{RGB}{213,94,0}
\definecolor{OIblue}{RGB}{0,114,178}
\let\mathrm\relax
\newcommand{\mathrm}[1]{\text{#1}}
\let\lim\relax
\DeclareMathOperator*{\lim}{\mathrm{lim}}
\DeclareMathOperator*{\colim}{\mathrm{colim}}
\renewcommand{\cref}[1]{\Cref{#1}}
\def\defthm#1#2{%
	\newtheorem{#1}{#2}[section]%
	\expandafter\def\csname #1autorefname\endcsname{#2}%
	\expandafter\let\csname c@#1\endcsname\c@theorem}
\theoremstyle{definition}
\numberwithin{equation}{section}
\newtheorem{theorem}{Theorem}[section]
\newtheorem*{definition*}{Definition}
\newtheorem*{question*}{Question}
\NewDocumentCommand{\makeabbrev}{mmm}
{
	\yoruk_makeabbrev:nnn { #1 } { #2 } { #3 }
}
\makeabbrev{\textbf}{bf#1}{
	a,b,c,d,e,f,g,h,i,j,k,l,m,n,o,p,q,r,t,u,v,w,x,y,z,%
	A,B,C,D,E,F,G,H,I,J,K,L,M,N,O,P,Q,R,T,U,V,W,X,Y,Z }
\makeabbrev{\boldsymbol}{bs#1}{%
	a,b,c,d,e,f,g,h,i,j,k,l,m,n,o,p,q,r,s,t,u,v,w,x,y,z,%
	A,B,C,D,E,F,G,H,I,J,K,L,M,N,O,P,Q,R,S,T,U,V,W,X,Y,Z }
\makeabbrev{\mathsf}{sf#1}{
	a,b,c,d,e,f,g,h,i,j,k,l,m,n,o,p,q,r,s,t,u,v,w,x,y,z,%
	A,B,C,D,E,F,G,H,I,J,K,L,M,N,O,P,Q,R,S,T,U,V,W,X,Y,Z }
\makeabbrev{\mathfrak}{fk#1}{
	a,b,c,d,e,f,g,h,j,k,i,l,m,n,o,p,q,r,s,t,u,v,w,x,y,z,%
	A,B,C,D,E,F,G,H,I,J,K,L,M,N,O,P,Q,R,S,T,U,V,W,X,Y,Z }
\makeabbrev{\mathcal}{cl#1}{
	A,B,C,D,E,F,G,H,I,J,K,L,M,N,O,P,Q,R,S,T,U,V,W,X,Y,Z }
\makeabbrev{\mathbb}{bb#1}{
	A,B,C,D,E,F,G,H,I,J,K,L,M,N,O,P,Q,R,S,T,U,V,W,X,Y,Z }
\makeabbrev{\underline}{u#1}{
	a,b,c,d,e,f,g,h,j,k,i,l,m,n,o,p,q,r,s,t,u,v,w,x,y,z,%
	A,B,C,D,E,F,G,H,I,J,K,L,M,N,O,P,Q,R,S,T,U,V,W,X,Y,Z }
\tikzset{
	partial ellipse/.style args={#1:#2:#3}{
			insert path={+ (#1:#3) arc (#1:#2:#3)}
		}
}
\definecolor{kusarigamaBrown}{RGB}{140,117,59}
\let\kg\kusarigama
\newcommand{\kgb}[1]{\Gamma\big(#1\big)}
\let\ckg\coKusarigama
\newcommand{\ckgb}[1]{
	\mathchoice%
	{\rotatebox[origin=c]{180}{$\Gamma$}\big(#1\big)}
	{\rotatebox[origin=c]{180}{$\Gamma$}\big(#1\big)}
	{\rotatebox[origin=c]{180}{$\scriptstyle\Gamma$}\big(#1\big)}
	{\rotatebox[origin=c]{180}{$\scriptscriptstyle\Gamma$}\big(#1\big)}
}
\newcommand{\kgpq}[3]{\Gamma^{#1,#2}(#3)}
\newcommand{\ckgpq}[3]{
	\mathchoice%
	{\rotatebox[origin=c]{180}{$\Gamma$}^{#1,#2}(#3)}
	{\rotatebox[origin=c]{180}{$\Gamma$}^{#1,#2}(#3)}
	{\rotatebox[origin=c]{180}{$\scriptstyle\Gamma$}^{#1,#2}(#3)}
	{\rotatebox[origin=c]{180}{$\scriptscriptstyle\Gamma$}^{#1,#2}(#3)}
}
\newcommand{\kgfpq}[2]{\Gamma^{#1,#2}}
\newcommand{\ckgfpq}[2]{\rotatebox[origin=c]{180}{$\Gamma$}^{#1,#2}}
\newcommand{\kgf}{\Gamma}
\newcommand{\ckgf}{\rotatebox[origin=c]{180}{$\Gamma$}}
\newcommand{\wlim}[1]{\mathrm{lim}^{#1}}
\newcommand{\wcolim}[1]{\mathrm{colim}^{#1}}
\newcommand{\ph}{\mathsf{h}}
\newcommand{\pqdiag}{\Delta_{p,q}}
\def\jap#1{\begin{CJK*}{UTF8}{bsmi}#1\end{CJK*}}
\newcommand{\CatFont}[1]{\mathcal{#1}}
\newcommand{\Hom}{\mathrm{hom}}%
\newcommand{\SloganFont}[1]{{\textit{#1. }}}
\newcommand{\Wedges}[2]{\mathsf{Wd}_{#1}\big(#2\big)}
\newcommand{\Cowedges}[2]{\mathsf{CWd}_{#1}\big(#2\big)}
\newcommand{\Mor}{\mathrm{Mor}}%
\newcommand{\otimesDay}{\mathbin{\circledast}}
\DeclareRobustCommand{\otimesV}{\mathbin{\boxtimes_{\CatFont{V}}}}
\newcommand{\nin}{\not{\in}}
\newcommand{\Open}{\mathsf{Op}}
\newcommand{\Tot}{\mathrm{Tot}}
\newcommand{\Cech}{\textsf{Č}_{\bullet}}
\newcommand{\pr}{\bst}
\newcommand{\projection}{\mathrm{pr}}
\newcommand{\inj}{[\boldsymbol{0}]}
\let\ceil\ceiling
\newcommand{\ConstantPNo}[2]{\boldsymbol{#1}}
\definecolor{darkGreen}{RGB}{2,127,27}
\newcommand{\mrp}[1]{\mathrlap{#1}}
\DeclareMathOperator*{\prodbi}{\sfW}
\newcommand{\negphantom}[1]{\settowidth{\dimen0}{#1}\hspace*{-\dimen0}}
\newcommand{\pt}{\mathrm{pt}}
\newcommand{\Tw}[1]{\mathsf{Tw}(#1)}
\newcommand{\Twckg}[3]{\mathsf{Tw}^{#1,#2}_{\rotatebox[origin=c]{180}{$\scriptstyle\Gamma$}}(#3)}
\newcommand{\Forgetful}{\Sigma}
\newcommand{\otimesDayN}[1]{\mathbin{\circledast}_{#1}}
\newcommand{\SimplexCategory}{\boldsymbol{\Delta}}
\newcommand{\ev}{\mathsf{ev}}
\newcommand{\Lan}{\mathsf{Lan}}
\newcommand{\Ran}{\mathsf{Ran}}
\newcommand{\M}{\mathsf{M}}
\newcommand{\W}{\mathsf{W}}
\newcommand{\DiLan}{\mathsf{DiLan}}
\newcommand{\DiRan}{\mathsf{DiRan}}
\newcommand{\pqdashv}[2]{\dashv}
\newcommand{\Unit}{\mathbf{1}}
\DeclareMathAlphabet{\dutchcal}{U}{dutchcal}{m}{n}
\SetMathAlphabet{\dutchcal}{bold}{U}{dutchcal}{b}{n}
\DeclareMathAlphabet{\dutchbcal}{U}{dutchcal}{b}{n}
\newcommand{\SheafFont}[1]{\dutchcal{#1}}
\newcommand{\say}[1]{``#1''}
\newcommand{\DayOperad}{\mathsf{Day}}
\newcommand{\id}{\mathrm{id}}%
\newcommand{\defeq}{\overset{\scriptscriptstyle\mathrm{def}}=}
\newcommand{\eHom}{\textbf{hom}}
\newcommand{\op}{\mathsf{op}}
\newcommand{\N}{\mathbb{N}}
\newcommand{\VNat}[1]{\mathbf{Nat}_{#1}}
\newcommand{\catpt}{\mathsf{pt}}
\newcommand{\typepq}[2]{\left[\pqMat{#1\\#2}\right]}
\NewDocumentCommand{\dummy}{O{r} O{s}}{
	\text{ð}^{#1}_{\kern-.1em #2}
}
\newcommand{\dummyF}{\text{ð}}
\newcommand{\Fun}{\Cats}
\newcommand{\PSh}[1]{\textsf{\upshape PSh}(#1)}
\newcommand{\PShf}{\textsf{\upshape PSh}}
\newcommand{\Shv}[1]{\textsf{\upshape Shv}(#1)}
\DeclareRobustCommand{\evdots}{
	\vbox{\baselineskip4\p@\lineskiplimit\z@\kern0\p@\hbox{.}\hbox{.}\hbox{.}}}
\newcommand{\cate}[1]{\mathsf{#1}}
\newcommand{\Sets}{\cate{Set}}
\newcommand{\Cat}{\cate{Cat}}
\let\Cats\Cat
\newcommand{\Prof}{\cate{Prof}}
\let\Set\Sets
\newlength{\OneCm}
\newlength{\OneCmAndAHalf}
\newcommand{\WeightedEnd}[2]{\int_{#1}^{#2}}
\newcommand{\WeightedCoend}[2]{\int^{#1}_{#2}}
\newcommand{\DiNat}{\mathrm{DiNat}}
\newcommand{\one}{\mathrm{pt}}
\newcommand{\VDiNat}[1]{{\bf \DiNat}_{#1}}
\newcommand{\pqDiNat}[2]{\DiNat^{(#1,#2)}}
\newcommand{\Nat}{\textup{Nat}}
\def\Wd{\mathsf{Wd}}
\def\CWd{\mathsf{CWd}}
\def\catWd{\mathsf{Wd}}
\newcommand{\pqWedges}[4]{\Wd^{(#1,#2)}_{#3}(#4)}
\newcommand{\pqCoWedges}[4]{\CWd^{(#1,#2)}_{#3}(#4)}
\newcommand{\pqWedgesFunctor}[3]{\Wd^{(#1,#2)}_{#3}}
\newcommand{\pqCoWedgesFunctor}[3]{\CWd^{(#1,#2)}_{#3}}
\newcommand{\pqEnd}[3]{\mathop{\prescript{}{\Scale[.75]{\raisebox{.25em}{$(#1,#2)$}}}{\int_{#3}}}}
\newcommand{\pqTw}[3]{\mathsf{Tw}^{(#1,#2)}(#3)}
\newcommand{\pqCoend}[3]{%
	\mathchoice
	{\mathop{\prescript{\Scale[.75]{\raisebox{-.25em}{$(#1,#2)$}}\kern-.625em}{}{\int^{#3}}}}
	{\mathop{\prescript{\Scale[.75]{\raisebox{-.25em}{$(#1,#2)$}}\kern-.25em}{}{\int^{#3}}}}
	{\mathop{\prescript{\Scale[.75]{\raisebox{-.25em}{$(#1,#2)$}}\kern-.25em}{}{\int^{#3}}}}
	{\mathop{\prescript{\Scale[.75]{\raisebox{-.25em}{$(#1,#2)$}}\kern-.25em}{}{\int^{#3}}}}
}
\NewDocumentCommand{\tpl}{m O{1} O{n}}{
	#1_{#2},\dots,#1_{#3}
}
\NewDocumentCommand{\pq}{m O{p} O{q}}{
	{#1}^{(#2,#3)}
}
\newcommand*{\Scale}[2][4]{\scalebox{#1}{\ensuremath{#2}}}%
\newcommand{\pqMat}[1]{
	{\Scale[.925]{\begin{smallmatrix}
					#1 
				\end{smallmatrix}}}
}
\newcommand{\oo}[1]{{#1}^\op\times {#1}}
\newenvironment{xsmallmatrix}[1]
{\renewcommand\thickspace{\kern#1}\smallmatrix}
{\endsmallmatrix}
\NewDocumentCommand{\var}{o m m}{
	\IfNoValueTF{#1}{
		\left[
			\begin{smallmatrix}
				#2 \\
				\downarrow \\
				#3
			\end{smallmatrix}\right]}
	{
		\left[
			\begin{xsmallmatrix}{0em}
				& #2 \\
				#1 & \downarrow \\
				& #3
			\end{xsmallmatrix}\right]}}
\newcommand{\subst}[3]{{{\boldsymbol{#1}}[#2/#3}]} 
\newcommand{\substMV}[4]{{{\boldsymbol{#1}}[#2,#3 / #4]}} %
\renewcommand{\ell}[1]{\lvert #1 \rvert}
\patchcmd\start@gather{$$}{%
	$$%
	\displaywidth=\textwidth
	\displayindent=-\leftskip
}{}{\errmessage{Cannot patch \string\start@gather}}
\patchcmd\start@align{$$}{%
	$$%
	\displaywidth=\textwidth
	\displayindent=-\leftskip
}{}{\errmessage{Cannot patch \string\start@align}}
\patchcmd\start@multline{$$}{%
	$$%
	\displaywidth=\textwidth
	\displayindent=-\leftskip
}{}{\errmessage{Cannot patch \string\start@multline}}
\patchcmd\mathdisplay{$$}{%
	$$%
	\displaywidth=\textwidth
	\displayindent=-\leftskip
}{}{\errmessage{Cannot patch \string\mathdisplay}}
\NewDocumentCommand{\wEnd}{o m}{
	\IfNoValueTF{#1}
	{\int_{#2}}
	{\int^{[#1]}_{#2}}
}
\NewDocumentCommand{\wCoend}{o m}{
	\IfNoValueTF{#1}
	{\int^{#2}}
	{\int_{[#1]}^{#2}}
}
\newcommand{\wLan}[1]{\mathsf{Lan}^{\left[#1\right]}}
\newcommand{\wRan}[1]{\mathsf{Ran}^{\left[#1\right]}}
\newcommand{\wDiLan}[1]{\mathsf{DiLan}^{\left[#1\right]}}
\newcommand{\wDiRan}[1]{\mathsf{DiRan}^{\left[#1\right]}}
\newcommand{\wNat}[1]{\mathsf{Nat}^{[#1]}}
\newcommand{\wVDiNat}[2]{\mathsf{DiNat}^{[#1]}_{#2}}
\newcommand{\wVNat}[2]{\mathsf{Nat}^{[#1]}_{#2}}
\begin{document}
\maketitle
\begin{abstract}
    We specialise a recently introduced notion of generalised dinaturality for functors $T : (\clC^\op)^p \times \clC^q \to \clD$ to the case where the domain (resp., codomain) is constant, obtaining notions of ends (resp., coends) of higher arity, dubbed herein $(p,q)$\hyp{}\emph{ends} (resp., $(p,q)$\hyp{}\emph{coends}). While higher arity co/ends are particular instances of ``totally symmetrised'' (ordinary) co/ends, they serve an important technical role in the study of a number of new categorical phenomena, which may be broadly classified as two new variants of category theory.

    The first of these, \emph{weighted category theory}, consists of the study of weighted variants of the classical notions and construction found in ordinary category theory, \emph{besides that of a limit}. This leads to a host of varied and rich notions, such as \emph{weighted Kan extensions}, \emph{weighted adjunctions}, and \emph{weighted ends}.

    The second, \emph{diagonal category theory}, proceeds in a different (albeit related) direction, in which one replaces universality with respect to natural transformations with universality with respect to \emph{dinatural} transformations, mimicking the passage from limits to ends. In doing so, one again encounters a number of new interesting notions, among which one similarly finds \emph{diagonal Kan extensions}, \emph{diagonal adjunctions}, and \emph{diagonal ends}.
\end{abstract}

\tableofcontents

\section{Introduction}
A functor $T : \oo{\clC} \to \clD$ can be thought as a generalised form of pairing defined on objects of $\clC$. Such a $T$ depends on two variables $C,C'$ running over a ``generalised space'' $\clC$, and given its action on morphisms, once covariant and once contravariant, the `generalised quantity' $T(C,C')$ can be `integrated', yielding two objects with dual universal properties:
\begin{enumtag}{c}
	\item A \emph{coend}, resulting by the symmetrisation along the diagonal of $T$, i.e. modding out the coproduct $\coprod_{C\in \clC}T(C,C)$ by the equivalence relation generated by the arrow functions $T(-,C') : \clC^\op(X,Y) \to \clD(TX, TY)$ and $T(C,-) : \clC(X,Y) \to \clD(TX, TY)$;
    \item An \emph{end}, i.e.\ an object $\int_C T(C,C)$ of $\clC$ arising as an `object of invariants' of `fixed points' for the same action of $T$ on arrows. By dualisation, if a coend is a quotient of $\coprod_{C\in \clC}T(C,C)$, an end is a \emph{subobject} of the product $\prod_{C\in \clC} T(C,C)$.
\end{enumtag}
It is the fact that, given $T : \oo{\clB}\times\oo{\clC} \to\clD$, the coend operation satisfies the commutativity rule
\[\int^A\!\int^B T(B,B;C,C)\cong \int^B\!\int^A T(B,B;C,C),\]
that motivated N.\ Yoneda \cite{Yoneda} to adopt for them an integral-like notation and terminology. For obvious reasons, this is called the \emph{Fubini rule} for coends. Equally obviously, an analogous result holds for ends.

Since, given a functor $T$ as above, the (co)end of $T$ can be computed as a certain (co)equaliser, (co)ends can be regarded as just particular (co)limits, associated to functors of a particular type of variance. Central to this reduction rule of (co)ends to (co)limits is the \emph{twisted arrow category} of $\clC$, i.e.\ the category of elements of the hom functor $\hom_\clC : \oo{\clC} \to \Set$: it is the case that 
\[ \int_A T(A,A)\cong \lim \Big(\Tw{\clC} \xto{\Sigma} \oo{\clC} \xto{T} \clD\Big) \]
and similarly, coends are colimits over $\Tw{\clC^\op}^\op$.




Throughout the years, \emph{coend calculus}, i.e.\ the set of rules allowing one to formally manipulate integrals of the above kind in order to prove statements in category theory, has found applications in many different fields of mathematics and even of computer science; see \cite{cofriend} for a comprehensive account of the topic.
\begin{question*}
	The particular variance $T$ is forced to have begs the question of whether there is an analogue of the above picture (a universal property, a Fubini rule, a mass of examples and applications), for more general functors
	\[ T : \underbrace{\clC^\op \times\dots\times \clC^\op}_{p \text{ times}} \times \underbrace{\clC\times\dots\times\clC}_{q \text{ times}} \to \clD \]
	taking $p\geq1$ contravariant arguments, and $q\geq1$ covariant arguments, admitting the possibility that $p\neq q$.\footnote{In \cite{kelly1972many}, the author says that a functor $T : \clC^n \to \clD$ has ``type $n$''; mimicking this nomenclature, we shortly refer to our functors as having ``type $\typepq{p}{q}$''.} The present paper aims to answer this question in the positive.
\end{question*}
Such generalised (or ``$(p,q)$''-)co/ends exist and can be characterised as suitable particular cases of classical co/ends, called $(1,1)$-co/ends. Moreover, they yield a similarly rich calculus.

Knowing that all $(p,q)$\hyp{}coends are suitable $(1,1)$-coends, one might expect that they do not provide new examples. This is not the case, but there is a price to pay, and that is inventing new category theory: the resulting theory is rich, having a great variety of examples (see \cref{sec:examples-of-higher-arity-ends}). Among these, we focus on two natural frameworks harbouring ``higher arity'' co/ends:
\begin{enumtag}{or}
	\item The possibility to define a higher arity analogue of the Day convolution monoidal structure of \cite{day:thesis,day:report,imkelly} on the presheaf category $\Cat(\clC^\op,\Set)$ over a monoidal category $(\clC,\otimes)$. Classically, the Day convolution of two presheaves $\SheafFont{F},\SheafFont{G}:\clC^\op\to\Set$ is defined by
	\[\SheafFont{F}\otimesDay \SheafFont{G} : \int^{X,Y\in\clC} \SheafFont{F}(X)\times \SheafFont{G}(Y) \times \clC(-,X\otimes Y).\]
    We generalise this notion in \cref{nn_day_convolution}: given $n$ presheaves $\tpl{\SheafFont{F}}$, their \emph{$n$-ary Day convolution} is the $(n,n)$-coend
	\[(\SheafFont{F}_{1}\otimesDayN{n}\cdots\otimesDayN{n}\SheafFont{F}_{n})(A)\defeq\pqCoend{n}{n}{A\in\clC}\SheafFont{F}_{1}(A)\times\cdots\times\SheafFont{F}_{n}(A)\times\clC\left(-,A^{\otimes n}\right).\]
	The sets of various $n$-ary Day convolutions, together with the convolution of order $k\le n$, organise into an operad that we dub the \emph{Day operad} in \cref{the-day-higher-arity-convolution-operad}.
    \item The object of higher arity dinatural transformations from a functor $F : (\clC^\op)^p \times \clC^q \to \clD$ to a functor $G\colon (\clC^{\op})^{q}\times\clC^{p}\longrightarrow\CatFont{D}$ turns out to be a natural example of a $(p,q)$\hyp{}end. Firstly noticed by Street and Dubuc in \cite[Theorem 1]{dinatural-transformations} for (ordinary, i.e. $(1,1)$-) dinatural transformations, this result holds in the higher arity context and provides one with an important tool for the study of higher arity co/ends. 
    
    Extending Street and Dubuc's work, we provide a canonical way to appropriately resolve a functor $F$ of type $\typepq{p}{q}$ into a functor $\ckgpq{p}{q}{F}$ of type $\typepq{q}{p}$ satisfying
    \[\Nat\left(\ckgpq{p}{q}{F},G\right) \cong \pqDiNat{p}{q}(F,G).\]
    That is, $\ckgpq{p}{q}{F}$ is characterised by the universal property that every \emph{dinatural} transformation from $F$ to $G$ amounts exactly to a \emph{natural} transformation from $\ckgpq{p}{q}{F}$ to $G$. Dually, one can ``resolve'' $G$ into a functor $\kgpq{q}{p}{G}$ satisfying
	\[\Nat\left(F,\kgpq{p}{q}{G}\right)  \cong \pqDiNat{p}{q}(F,G)\]
    We study these functors in detail in \cref{sec:higher-arity-yoneda}, where, as an application, we construct an analogue of the twisted arrow category $\Tw{\clC}$ of $\clC$ in the higher arity setting, dubbed the \emph{higher arity twisted arrow category} of $\clC$ in \cref{higher-arity-twisted-arrow-categories}.
\end{enumtag}
Higher arity co/ends also pave the way to a number of future research directions:
\begin{enumerate}
	\item\emph{Weighted category theory (besides co/limits)}.
	      The notion of weighted co/limit arises as the solution to a representability problem. Computing the `conical' colimit of a diagram $D : \clC \to \clD$, we aim to find an object $\colim(D)$ that represents the functor sending $X\in \clD$ to the set of cocones for $D$, i.e.\ natural transformations between the `constant at the point' functor $*$ and the functor $\clD(D-,X)$. In a similar fashion, when we want to compute the \emph{weighted} colimit of $D$ we try to represent the functor
	      \[X\mapsto \Cat(\clC^\op,\Set)(W,\clD(D-,X)).\]
	      Now, what if we try to do the same for the other usual categorical notions apart from that of co/limits, such as adjunctions, Kan extensions, monads, or co/ends? For example, what if, instead of trying to represent the functor that sends $T : \oo{\clC} \to \clD$ to the set of its co/wedges, we try to represent the functor that sends $X$ to $\DiNat(W,\clD(T,X))$?

	      We dub \emph{weighted category theory} the piece of technology that addresses this problem.
	\item\emph{Diagonal category theory}.
          In diagonal category theory, a different (albeit related) path is taken. In analogy to the passage from limits (universal cones) to ends (universal wedges), we seek a general framework, for other categorical constructions, in which the passage from cones to wedges is meaningful. Thus, we aim to replace naturality requests with dinaturality in categorical concepts besides that of a limit, obtaining, as a result, a very rich theory with notions such as diagonal Kan extensions, adjunctions and monads, all standing to their classical counterparts as co/ends stand to co/limits.

	      Despite the intrinsic obstruction to compose dinatural transformations, the theory so obtained is non-trivial and sheds light on a number of aspects of classical category theory, such as the disparity between limits and colimits assorting themselves into a triple adjunction
	      \[
		      (\colim\dashv\Delta_{(-)}\dashv\lim)\colon\Fun(\clC,\CatFont{D})\longrightleftrightarrows\CatFont{D}
	      \]
          with no such result existing for ends and coends: rather, we have \emph{diagonally adjoint functors}, of which ends and coends are a fundamental example, assembling into a \say{triple diagonal adjunction} $\int^{A}\dashv_{\delta}\Delta_{(-)}\dashv_{\delta}\int_{A}$.
\end{enumerate}
\subsection{Structure of the paper}
In \cref{sec:dinatural-transformations-of-higher-arity-from-the-geometry-of-hypercubes}, we motivate higher arity dinaturality, showing how such a notion arises geometrically as a \say{diagonal} version of natural transformations between functors with domain a product category. All the material there contained is very well-known, and this is only meant to fix notation at the outset. We borrow from \cite[pp.~48--50]{baez2010physics} an intuitive explanation of how dinaturality arises from elementary considerations\footnote{%
    See also \cite{dinaturality-from-naturality} for a similar presentation.
}, %
first recalling it in \cref{rem:from-naturality-cubes-to-dinaturality} and then generalising that classical argument to the higher arity case in \cref{rem:from-naturality-hypercubes-to-p-q-dinaturality}.

In \cref{sec:higher-arity-wedges}, we review and specialise the notion of dinaturality introduced in \cite{AlessioThesis,2007.07576} to the appropriate setting for considering ``universal higher arity dinaturality''. In detail, we first recall the notion of a \emph{$(p,q)$\hyp{}dinatural transformation} and study its properties, generalising results of Street--Dubuc (\cite{dinatural-transformations}) to functors of arbitrary arity. We then proceed to discuss $(p,q)$\hyp{}dinatural transformations from constant functors, which we dub \emph{$(p,q)$\hyp{}wedges}, in analogy with the classical case.

In \cref{sec:higher-arity-ends}, we formulate the notion of a \emph{higher arity co/end}. Just as ends are universal wedges, higher arity ends are universal $(p,q)$\hyp{}wedges. After introducing them in \cref{def:p-q-ends}, we discuss some of their basic properties (\cref{prop:properties-of-p-q-ends}). The intuition behind the definition is that a $(p,q)$-end of a functor $T$ is an end of a version of $T$ that has been ``completely symmetrised''. Then, in \cref{adjoints-and-the-fubini-rule}, we state and prove a Fubini rule for higher arity co/ends, generalising the classical Fubini rule for co/ends.

In \cref{sec:examples-of-higher-arity-ends}, we illustrate the theory developed so far by working out a large number of examples. We study naturally-appearing instances of higher arity co/ends in category theory as well as in related areas. The machinery employed here is elementary, but provides insightful examples when applied to laying down the rules of a `calculus of weighted ends', and to `diagonal category theory' (see \cref{glance_at_weightends,glance_at_extradiag} for reference).

In \cref{sec:higher-arity-yoneda}, we introduce the notion of \emph{co/kusarigama}. These are fundamental constructions in higher arity co/end calculus allowing us to reduce the study of $(p,q)$\hyp{}dinaturality to that of (ordinary) naturality. Co/kusarigama also provide us with a way to express higher arity co/ends as weighted co/limits, as well as with a higher arity version of the twisted arrow category (\cref{higher-arity-twisted-arrow-categories}).
\subsection{Geometric motivation for higher arity dinaturality}\label{sec:dinatural-transformations-of-higher-arity-from-the-geometry-of-hypercubes}
\begin{notation}[$(p,q)$\hyp{}products, tensor calculus notation]\label{fst_notations}
    The entire paper deals with categories that are the product of $q$ copies of a (small) category $\clC$, and $p$ copies of the opposite category $\clC^\op$, for $p$, $q$ two non-negative integers: throughout the entire discussion we will adopt the notation
	\[\pq{\clC} \defeq \underbrace{\clC^\op \times\dots\times \clC^\op}_{p \text{ times}} \times \underbrace{\clC\times\dots\times\clC}_{q \text{ times}}.\]
	An alternative, short notation for the category $(\clC^\op)^p=(\clC^p)^\op$ is $\clC^{-p}\times \clC^q$, but this has to be used \emph{cum grano salis}, as some of the usual sign conventions do not apply (for example, exponents of discordant signs do not add: $\clC^{-1}\times\clC^1 = \oo{\clC} = \pq{\clC}[1][1]$ is `irreducible').

	\medskip A functor having domain $\pq{\clC}$ will be called a functor of \emph{type} $\typepq{p}{q}$.

	\medskip To denote the action of a functor of type $\typepq{p}{q}$ on objects we write
	\[F_{\uA'}^{\uA''} \defeq F_{\tpl{A'}[1][q]}^{\tpl{A''}[1][p]} \defeq F\left(A''_{1},\ldots,A''_{p},A'_{1},\ldots,A'_{q}\right)\]
    for (tuples of) objects $\uA'\defeq (\tpl{A'}[1][q])\in\clC^q$ and $\uA''\defeq(\tpl{A''}[1][p])\in\clC^{-p}$. (See \cref{sec:notation} below for variations and specialisations of this notation.)

	This is somewhat reminding of the way in which one writes the coordinates of a tensor of type $\typepq{p}{q}$: contravariant indices go up.
\end{notation}
With notation now fixed, we start by recalling the notion of dinatural transformation. We borrow an argument from \cite[pp.~48--50]{baez2010physics} that shows how dinaturality is, if not unavoidable, at least motivated by elementary geometric considerations.

Informally, we may summarise the discussion below as follows: to arrive at the notion of dinaturality, one performs a symmetrisation process, first considering a ``naturality cube'' and then removing its ``non-diagonal'' pieces.

Baez--Stay's argument can be adapted to motivate our notion of \emph{$(p,q)$\hyp{}dinaturality} (\cref{def:p-q-dinatural-transformation}) as similarly unavoidable (see \cref{rem:from-naturality-hypercubes-to-p-q-dinaturality} below).
\begin{remark}[From naturality cubes to dinaturality hexagons]\label{rem:from-naturality-cubes-to-dinaturality}
	Let $\clC$ be a category, and consider the product category $\clC^{\op}\times\clC$. This is the category whose
	\begin{enumtag}{cc}
		\item\label{product-category-1}Objects are pairs $(A,B)$ of objects of $\clC$ (usually, the set/class of objects of a category will be denoted as $\clC_o$);
		\item\label{product-category-2}Morphisms $(A,B) \to (A',B')$ are pairs $\Big(\var[g]{A'}{A}, \var[f]{B}{B'}\Big)$ of morphisms of $\clC$.
	\end{enumtag}
	Now, each morphism $f\colon A\to  B$ of $\clC$, gives rise to a commutative square in $\clC^{\op}\times\clC$ of the form
	\begin{center}
		\begin{tikzcd}
			{(B,A)}
			\arrow[d, "{(f,A)}"']
			\arrow[r, "{(B,f)}"]
			&
			{(B,B)}
			\arrow[d, "{(f,B)}"]
			\\
			{(A,A)}
			\arrow[r, "{(A,f)}"']
			&
			{(A,B)\mathrlap{,}}
		\end{tikzcd}
	\end{center}
	to which we can apply functors $F,G\colon\oo{\clC} \longrightrightarrows \clD$, obtaining two naturality squares
	\[
		\begin{tikzcd}
			F^{B}_{A}
			\arrow[r, "F^{B}_{f}"]
			\arrow[d, "F^{f}_{A}"']
			&
			F^{B}_{B}
			\arrow[d, "F^{f}_{B}"]
			\\
			F^{A}_{A}
			\arrow[r, "F^{A}_{f}"']
			&
			F^{A}_{B}
		\end{tikzcd}
		\quad
		\quad
		\begin{tikzcd}
			G^{B}_{A}
			\arrow[r, "G^{B}_{f}"]
			\arrow[d, "G^{f}_{A}"']
			&
			G^{B}_{B}
			\arrow[d, "G^{f}_{B}"]
			\\
			G^{A}_{A}
			\arrow[r, "G^{A}_{f}"']
			&
			G^{A}_{B}\mathrlap{,}
		\end{tikzcd}
	\]
	from which we derive that $F^f_B\circ F^B_f = F_f^A\circ F^f_A$, and similarly for $G$.

	A natural transformation from $F$ to $G$ is then a collection
	\[
		\{\alpha^{A}_{B}\colon F^{A}_{B}\to  G^{A}_{B} \mid (A,B)\in(\clC^{\op}\times\clC)_o\}
	\]
	of morphisms of $\clD$ making the diagram
	\begin{center}
		\begin{tikzcd}
			F^{A}_{B}
			\arrow[r, "F^{g}_{f}"]
			\arrow[d, "\alpha^{A}_{B}"']
			&
			F^{A'}_{B'}
			\arrow[d, "\alpha^{A'}_{B'}"]
			\\
			G^{A}_{B}
			\arrow[r, "G^{g}_{f}"']
			&
			G^{A'}_{B'}
		\end{tikzcd}
	\end{center}
	commute for every $\var[g]{A'}{A}$ and $\var[f]{B}{B'}$. When $g=f$, this reduces to
	\begin{center}
		\begin{tikzcd}
			F^{B}_{A}
			\arrow[r, "F^{f}_{f}"]
			\arrow[d, "\alpha^{B}_{A}"']
			&
			F^{A}_{B}
			\arrow[d, "\alpha^{A}_{B}"]
			\\
			G^{B}_{A}
			\arrow[r, "G^{f}_{f}"']
			&
			G^{A}_{B},
		\end{tikzcd}
	\end{center}
	which we may rewrite as the following commutative cube, using that $(f,f)=(\id_{B},f)\circ(f,\id_{A})$ in $\oo{\clC}$:
	\[\footnotesize
		\begin{tikzcd}[row sep=.25cm, column sep=.25cm]
			F^{B}_{A}
			\arrow[rr]
			\arrow[rd]
			\ar[dd]
			&&
			F^{B}_{B}
			\ar[dr]
			&
			\\
			&
			F^{A}_{A}
			\ar[rr,crossing over]
			&&
			F^{A}_{B}
			\ar[dd]
			\\
			G^{B}_{A}
			\ar[dr]
			&&
			{}
			&
			\\
			&
			G^{A}_{A}
			\ar[from=uu]
			\arrow[rr]
			&&
			G^{A}_{B}
		\end{tikzcd}
		\hspace{0.5cm}{\fontsize{20}{20}\selectfont =}\hspace{0.5cm}
		\begin{tikzcd}[row sep=.25cm, column sep=.25cm]
			F^{B}_{A}
			\arrow[rr]
			\ar[dd]
			&&
			F^{B}_{B}
			\arrow[dd]
			\ar[dr]
			&
			\\
			&
			{}
			&&
			F^{A}_{B}
			\ar[dd]
			\\
			G^{B}_{A}
			\ar[dr]
			\ar[rr]
			&&
			G^{B}_{B}
			\arrow[rd]
			&
			\\
			&
			G^{A}_{A}
			\arrow[rr]
			&&
			G^{A}_{B}
		\end{tikzcd}
	\]
	(in all arrows the action of $F$ on its covariant or contravariant component is taken into account).

	A notion of \say{diagonal transformation} between $F$ and $G$ is then a collection of morphisms of $\clD$ from $F^{A}_{A}$ to $G^{A}_{A}$. To obtain it, we should remove the \say{non-diagonal pieces} $\alpha^{B}_{A}$ and $\alpha^{A}_{B}$ from this cube, arriving at the diagram
	\[\scriptscriptstyle
		\begin{tikzcd}[ampersand replacement=\&, row sep=.5cm, column sep=.5cm]
			F^{B}_{A}
			\arrow[rr, "F^{B}_{f}"]
			\arrow[rd, "F^{f}_{A}"']
			\ar[dd, gray!40]
			\&\&
			F^{B}_{B}
			\arrow[dd, "\alpha^{B}_{B}",pos=0.675]
			\ar[dr, gray!40]
			\&
			\\
			\&
			F^{A}_{A}
			\ar[rr, gray!40,crossing over]
			\&\&
			\textcolor{gray!40}{F^{A}_{B}}
			\ar[dd, gray!40]
			\\
			\textcolor{gray!40}{G^{B}_{A}}
			\ar[dr, gray!40]
			\ar[rr, gray!40]
			\&\&
			G^{B}_{B}
			\arrow[rd, "G^{f}_{B}"]
			\&
			\\
			\&
			G^{A}_{A}
			\ar[from=uu, "\alpha_A^A"', crossing over,pos=0.325]
			\arrow[rr, "G^{A}_{f}"']
			\&\&
			G^{A}_{B}\mathrlap{.}
		\end{tikzcd}
	\]
	Writing $\alpha_{A}$ (resp.\ $\alpha_{B}$) for $\alpha^{A}_{A}$ (resp.\ $\alpha^{B}_{B}$) and `flattening' the resulting diagram, we get the dinaturality hexagon
	\[\small
		\begin{tikzcd}[row sep={0.0cm,between origins}, column sep={0.0cm,between origins}, ampersand replacement=\&]
			\&[0.43301270189\OneCmAndAHalf]
			F^{A}_{A}
			\arrow[r,"\alpha_{A}"]
			\&[1.0\OneCmAndAHalf]
			G^{A}_{A}
			\arrow[rd,"G^{A}_{f}"]
			\&[0.43301270189\OneCmAndAHalf]
			\\[0.86602540378\OneCmAndAHalf]
			F^{B}_{A}
			\arrow[ru,"F^{f}_{A}"]
			\arrow[rd,"F^{B}_{f}"']
			\&[0.43301270189\OneCmAndAHalf]
			\&[1.0\OneCmAndAHalf]
			\&[0.43301270189\OneCmAndAHalf]
			G^{A}_{B}
			\\[0.86602540378\OneCmAndAHalf]
			\&[0.43301270189\OneCmAndAHalf]
			F^{B}_{B}
			\arrow[r,"\alpha_{B}"']
			\&[1.0\OneCmAndAHalf]
			G^{B}_{B}
			\arrow[ru,"G^{f}_{B}"']
			\&[0.43301270189\OneCmAndAHalf]
		\end{tikzcd}
	\]
	for $\alpha : F\din G$.
\end{remark}
Given functors $F : \pq{\clC} \to \clD$ and $G\colon\clC^{(q,p)}\longrightarrow\CatFont{D}$, we seek a similar intuitive explanation for an analogous notion of $(p,q)$\hyp{}dinaturality. Of course, as the sum $p+q$ grows bigger, it is harder and harder to visualise the underlying geometry, since we have to work in dimension $p+q+1\ge 4$. For this reason, we content ourselves with the case $(p,q)=(2,1)$.

Recall that we write $\pq{\clC}[2][1]$ for the category $\clC^{\op}\times\clC^{\op}\times\clC$.
\begin{remark}[From naturality hypercubes to $(2,1)$-Dinaturality]\label{rem:from-naturality-hypercubes-to-p-q-dinaturality}
    In a similar fashion to \cref{rem:from-naturality-cubes-to-dinaturality}, a morphism $f: A\to B$ induces a commutative cube in $\pq{\clC}[2][1]$ which, under the action of $F$ and $G$, yields two commutative cubes
	\[\footnotesize
		\begin{tikzcd}[row sep={1.0cm,between origins}, column sep={1.0cm,between origins}, baseline=(current bounding box.center)]
			F^{BB}_{A}
			\arrow[rr]
			\arrow[dd]
			\arrow[rd]
			&&
			F^{BB}_{B}
			\arrow[rd]
			\arrow[dd,dashed]
			&
			\\
			&
			F^{BA}_{A}
			\arrow[rr,crossing over]
			&&
			F^{BA}_{B}
			\arrow[dd]
			\\
			F^{AB}_{A}
			\arrow[rd]
			\arrow[rr,dashed]
			&&
			F^{AB}_{B}
			\arrow[rd,dashed]
			&
			\\
			&
			F^{AA}_{A}
			\arrow[rr]
			\arrow[from=uu,crossing over]
			&&
			F^{AA}_{B}
		\end{tikzcd}
		\hspace{1cm}
		\begin{tikzcd}[row sep={1.0cm,between origins}, column sep={1.0cm,between origins}, baseline=(current bounding box.center)]
			G^{BB}_{A}
			\arrow[rr]
			\arrow[dd]
			\arrow[rd]
			&&
			G^{BB}_{B}
			\arrow[rd]
			\arrow[dd,dashed]
			&
			\\
			&
			G^{BA}_{A}
			\arrow[rr,crossing over]
			&&
			G^{BA}_{B}
			\arrow[dd]
			\\
			G^{AB}_{A}
			\arrow[rd]
			\arrow[rr,dashed]
			&&
			G^{AB}_{B}
			\arrow[rd,dashed]
			&
			\\
			&
			G^{AA}_{A}
			\arrow[rr]
			\arrow[from=uu,crossing over]
			&&
			G^{AA}_{B}
		\end{tikzcd}
	\]
	in $\clD$. Now, a natural transformation $\alpha$ from $F$ to $G$ is a collection
	\[
        \Big\{\alpha^{AB}_{C}\colon F^{AB}_{C}\to  G^{AB}_{C}\mid (A,B,C)\in \pq{\clC}[2][1]_o \Big\}
	\]
	of morphisms of $\clD$ making the hypercube diagram below-left commute:
	\begin{center}
        \begin{codi}[baseline=(current bounding box.center),scale=.625]
            \node[inner sep=.25pt] (0-inner) at (0*45:2) {$\scriptscriptstyle G^{BB}_{B}$};
            \node[inner sep=.25pt] (0-outer) at (0*45:4) {$\scriptscriptstyle F^{AB}_{B}$};
            \node[inner sep=.25pt] (1-inner) at (1*45:2) {$\scriptscriptstyle F^{AB}_{A}$};
            \node[inner sep=.25pt] (1-outer) at (1*45:4) {$\scriptscriptstyle F^{BB}_{B}$};
            \node[inner sep=.3pt] (2-inner) at (2*45:2) {$\scriptscriptstyle F^{BA}_{B}$};
            \node[inner sep=.25pt] (2-outer) at (2*45:4) {$\scriptscriptstyle F^{BB}_{A}$};
            \node[inner sep=.25pt] (3-inner) at (3*45:2) {$\scriptscriptstyle G^{BB}_{A}$};
            \node[inner sep=.25pt] (3-outer) at (3*45:4) {$\scriptscriptstyle F^{BA}_{A}$};
            \node[inner sep=.25pt] (4-inner) at (4*45:2) {$\scriptscriptstyle F^{AA}_{A}$};
            \node[inner sep=.25pt] (4-outer) at (4*45:4) {$\scriptscriptstyle G^{BA}_{A}$};
            \node[inner sep=.25pt] (5-inner) at (5*45:2) {$\scriptscriptstyle G^{BA}_{B}$};
            \node[inner sep=.25pt] (5-outer) at (5*45:4) {$\scriptscriptstyle G^{AA}_{A}$};
            \node[inner sep=.25pt] (6-inner) at (6*45:2) {$\scriptscriptstyle G^{AB}_{A}$};
            \node[inner sep=.25pt] (6-outer) at (6*45:4) {$\scriptscriptstyle G^{AA}_{B}$};
            \node[inner sep=.25pt] (7-inner) at (7*45:2) {$\scriptscriptstyle F^{AA}_{B}$};
            \node[inner sep=.25pt] (7-outer) at (7*45:4) {$\scriptscriptstyle G^{AB}_{B}$};
            \mor (1-inner) [OIvermillion,->] (4-inner) [OIvermillion,->] (7-inner) [OIvermillion,<-] (2-inner) [dashed, ->] (5-inner) [OIblue,<-] (0-inner) [OIblue,<-] (3-inner) [OIblue,->] (6-inner) [dashed, <-] (1-inner);
            \mor (0-outer) [OIvermillion,<-] (1-outer) [OIvermillion,<-] (2-outer) [OIvermillion,->] (3-outer) [dashed, ->] (4-outer) [OIblue,->] (5-outer) [OIblue,->] (6-outer) [OIblue,<-] (7-outer) [dashed, <-] (0-outer);
            \mor (1-inner) [OIvermillion,->] (0-outer) [OIvermillion,->] (7-inner);
            \mor (2-inner) [OIvermillion,<-] (1-outer) [dashed,->] (0-inner);
            \mor (3-inner) [dashed,<-] (2-outer) [OIvermillion,->] (1-inner);
            \mor (4-inner) [OIvermillion,<-] (3-outer) [OIvermillion,->] (2-inner);
            \mor (5-inner) [OIblue,<-] (4-outer) [OIblue,<-] (3-inner);
            \mor (6-inner) [OIblue,->] (5-outer) [dashed,<-] (4-inner);
            \mor (7-inner) [dashed,->] (6-outer) [OIblue,<-] (5-inner);
            \mor (0-inner) [OIblue,->] (7-outer) [OIblue,<-] (6-inner);
        \end{codi}
        \hspace{-0.375em}
        \begin{tikzcd}[row sep={6.0em,between origins}, column sep={6.0em,between origins}, ampersand replacement=\&,cramped]
            {}
            \arrow[r, "\scalebox{0.75}{%
                    \begin{tabular}{c}%
                        Remove ``non- \\%
                        diagonal'' pieces%
                    \end{tabular}%
                }",mapsto]
            \&
            {}
        \end{tikzcd}
        \hspace{-0.375em}
        \begin{codi}[baseline=(current bounding box.center),scale=.625]
            \node[inner sep=.25pt] (0-inner) at (0*45:2) {$\scriptscriptstyle G^{BB}_{B}$};
            \node[inner sep=.25pt] (0-outer) at (0*45:4) {\textcolor{gray!40}{$\scriptscriptstyle F^{AB}_{B}$}};
            \node[inner sep=.25pt] (1-inner) at (1*45:2) {$\scriptscriptstyle F^{AB}_{A}$};
            \node[inner sep=.25pt] (1-outer) at (1*45:4) {$\scriptscriptstyle F^{BB}_{B}$};
            \node[inner sep=.25pt] (2-inner) at (2*45:2) {\textcolor{gray!40}{$\scriptscriptstyle F^{BA}_{B}$}};
            \node[inner sep=.25pt] (2-outer) at (2*45:4) {$\scriptscriptstyle F^{BB}_{A}$};
            \node[inner sep=.25pt] (3-inner) at (3*45:2) {\textcolor{gray!40}{$\scriptscriptstyle G^{BB}_{A}$}};
            \node[inner sep=.25pt] (3-outer) at (3*45:4) {\textcolor{gray!40}{$\scriptscriptstyle F^{BA}_{A}$}};
            \node[inner sep=.25pt] (4-inner) at (4*45:2) {$\scriptscriptstyle F^{AA}_{A}$};
            \node[inner sep=.25pt] (4-outer) at (4*45:4) {\textcolor{gray!40}{$\scriptscriptstyle G^{BA}_{A}$}};
            \node[inner sep=.25pt] (5-inner) at (5*45:2) {\textcolor{gray!40}{$\scriptscriptstyle G^{BA}_{B}$}};
            \node[inner sep=.25pt] (5-outer) at (5*45:4) {$\scriptscriptstyle G^{AA}_{A}$};
            \node[inner sep=.25pt] (6-inner) at (6*45:2) {\textcolor{gray!40}{$\scriptscriptstyle G^{AB}_{A}$}};
            \node[inner sep=.25pt] (6-outer) at (6*45:4) {$\scriptscriptstyle G^{AA}_{B}$};
            \node[inner sep=.25pt] (7-inner) at (7*45:2) {\textcolor{gray!40}{$\scriptscriptstyle F^{AA}_{B}$}};
            \node[inner sep=.25pt] (7-outer) at (7*45:4) {$\scriptscriptstyle G^{AB}_{B}$};
            \mor (1-inner) [->] (4-inner) [gray!40,->] (7-inner) [gray!40,<-] (2-inner) [dashed, gray!40, ->] (5-inner) [gray!40,<-] (0-inner) [gray!40,<-] (3-inner) [gray!40,->] (6-inner) [dashed, gray!40, <-] (1-inner);
            \mor (0-outer) [gray!40,<-] (1-outer) [<-] (2-outer) [gray!40,->] (3-outer) [dashed, gray!40, ->] (4-outer) [gray!40,->] (5-outer) [->] (6-outer) [<-] (7-outer) [dashed, gray!40, <-] (0-outer);
            \mor (1-inner) [gray!40,->] (0-outer) [gray!40,->] (7-inner);
            \mor (2-inner) [gray!40,<-] (1-outer) [dashed,->] (0-inner);
            \mor (3-inner) [dashed, gray!40,<-] (2-outer) [->] (1-inner);
            \mor (4-inner) [gray!40,<-] (3-outer) [gray!40,->] (2-inner);
            \mor (5-inner) [gray!40,<-] (4-outer) [gray!40,<-] (3-inner);
            \mor (6-inner) [gray!40,->] (5-outer) [dashed,<-] (4-inner);
            \mor (7-inner) [dashed, gray!40,->] (6-outer) [gray!40,<-] (5-inner);
            \mor (0-inner) [->] (7-outer) [gray!40,<-] (6-inner);
        \end{codi}
	\end{center}
    (\textcolor{OIvermillion}{vermillion}: the $F$ cube; \textcolor{OIblue}{blue}: the $G$ cube). Again deleting the nodes $F_X^{Y,Z}$, $G_X^{Y,Z}$ for which $X,Y,Z$ are not all equal, we get the above-right diagram. Flattening the result, this gives us the octagonal diagram below-left, which becomes the \say{$(2,1)$-dinaturality hexagon} below-right upon using that $F^{f,f}_{A} = F^{A,f}_{A}\circ F^{f,B}_{A}$ and $G^{f,f}_{B} = G^{A,f}_{B}\circ G^{f,B}_{B}$:
	\begin{center}
		\begin{codi}[baseline=(current bounding box.center)]
			\foreach [count=\i from 0] \l in
			{ G_B^{BB} 
			, F_B^{BB} 
			, F_A^{BB} 
			, F_A^{AB} 
			, F_A^{AA} 
			, G_A^{AA} 
			, G_B^{AA} 
			, G_B^{AB} 
			}
			\node (\i) at (\i*45:1.5) {$\scriptstyle\l$};
			\mor (2) -> (1) -> (0) -> (7) -> (6);
			\mor * -> (3) -> (4) -> (5) -> *;
			\mor (2) [gray!40,->] (4);
			\mor (0) [gray!40,->] (6);
		\end{codi}
		\begin{tikzcd}[row sep={6.0em,between origins}, column sep={6.0em,between origins}, ampersand replacement=\&,cramped]
			{}
			\arrow[r, mapsto]
			\&
			{}
		\end{tikzcd}
		\begin{codi}[baseline=(current bounding box.center)]
			\foreach [count=\i from 0] \l in
			{ F_A^{BB} 
			, F_A^{AA} 
			, G_A^{AA} 
			, G_B^{AA} 
			, G_B^{BB} 
			, F_B^{BB} 
			}
			\node (\i) at (\i*60+90:1.5) {$\scriptstyle\l$};
			\mor (0) -> (1) -> (2) -> (3);
			\mor * -> (5) -> (4) -> *;
		\end{codi}
	\end{center}
\end{remark}
The reader might have noticed, at this point, that in order not to clutter the page with too many unwanted apices and pedices, we have to establish appropriate notation to represent how functors act on tuples.
\subsection{Notation and preliminaries}\label{sec:notation}
All the basic notation for categories and functors used in this paper follows standard practice. Apart from this, and apart from what we already introduced in \cref{fst_notations}, we need notation for:
\begin{enumtag}{n}
	\item\label{n_1} A generic tuple of objects,
	\[\uA \defeq (\tpl{A})\]
	often split as the juxtaposition $\uA';\uA''$ of two subtuples of length $p,q$,
	\[ \uA'\defeq (\tpl{A}[1][q]), \qquad \uA'' \defeq(\tpl{A}[p+1][p+q])\]
	\item \label{n_2}  As already said, the image of a split tuple $\uA';\uA''$ under a functor of type $\typepq{p}{q}$, $F : \pq{\clC} \to \clD$ is denoted $F^{\uA'}_{\uA''}$: the contravariant components come first/top, and the covariant component come second/bottom. So: contravariant components are always \emph{left} in the typing \[F : \pq{\clC} \to \clD\]
	of a functor, and \emph{up} in its action on objects.
    \item \label{n_3}  Denoting a functor $F$ of type $\typepq{p}{q}$ evaluated at a diagonal tuple $\bsA\defeq(A,\ldots,A)$ with $A\in\CatFont{C}_{o}$: we write
	\[F_{\bsA}^{\bsA} \defeq F_{A,\ldots,A}^{A,\ldots,A},\]
	where the superscript has $p$ elements, and the subscript has $q$ elements.
\end{enumtag}
\begin{definition}\label{def:p-q-diagonal-functor}%
	The \emph{$(p,q)$\hyp{}diagonal functor} is the functor $\pqdiag\colon\clC^{\op}\times\clC\to \pq{\clC}$ defined by
    \[\pqdiag\defeq\underbrace{\Delta^{\op}\times\cdots\times\Delta^{\op}}_{\text{$p$ times}}\times\underbrace{\Delta\times\cdots\times\Delta}_{\text{$q$ times}}.\]
	More explicitly, $\pqdiag\colon\clC^{\op}\times\clC\to \pq{\clC}$ is the functor sending
		\begin{enumtag}{ud}
			\item An object $(A,B)$ of $\clC^{\op}\times\clC$ to the object $(\bsA,\bsB)\defeq(A,\ldots,A,B,\ldots,B)$ of $\pq{\clC}$, and
			\item A morphism $(f,g)\colon(A,B)\to (A',B')$ of $\clC^{\op}\times\clC$ to the morphism $(\bsf,\bsg)\defeq(f,\ldots,f,g,\ldots,g)$ of $\pq{\clC}$,
		\end{enumtag}
		where in the expression $(\bsA,\bsB)$ we have $p$ repeated copies of $A$ and $q$ repeated copies of $B$, and similarly for $(\bsf,\bsg)$.
\end{definition}
The notational choices and lemmas below will be used only from \cref{adjoints-and-the-fubini-rule} on, so we advise the reader to skip them at this point, referring to them later as needed.
\begin{notation}[Mixed Products and Coproduts]\label{biprod-p-q}
    Let $\clC$ be a category with finite products and coproducts. Given tuples $(\tpl{A}[1][p])$ and $(\tpl{B}[1][q])$, we write
	\begin{align*}
		\W_{p,q}(\uA,\uB) & \defeq \left(\prod_{i=1}^{p}A_{i},\coprod_{j=1}^{q}B_{j}\right), \\
		\M_{p,q}(\uA,\uB) & \defeq \left(\coprod_{i=1}^{p}A_{i},\prod_{j=1}^{q}B_{j}\right).
	\end{align*}
\end{notation}
\begin{lemma}[Adjoints to the $(p,q)$\hyp{}Diagonal Functor]\label{adjoints-to-the-p-q-diagonal-functor}
	If $\clC$ has products and coproducts, then we have a triple adjunction
	\begin{center}
		$\left(\W_{p,q}\dashv\pqdiag\dashv\M_{p,q}\right)\colon$
		\begin{tikzcd}[row sep=4.5em, column sep=4.5em, background color=backgroundColor]
			\pq{\clC}
			\arrow[r, "\W_{p,q}"{name=Lan}, shift left=1.5em]
			\arrow[r, "\M_{p,q}"'{name=Ran}, shift right=1.5em]
			&
			\phantom{\pq{\clC}}\mathrlap{\negphantom{$\pq{\clC}$}\clC^{\op}\times\clC.}
			\arrow[l, "\pqdiag"{name=K,description}]
			\arrow[phantom,from=K, to=Lan,"\dashv" rotate=-90]
			\arrow[phantom,from=K, to=Ran,"\dashv" rotate=-90]
		\end{tikzcd}
		$\phantom{\times\clC.}$
	\end{center}%
\end{lemma}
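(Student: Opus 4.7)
The strategy is to recognise that $\pqdiag$ is a componentwise diagonal functor, and that on each factor the claimed adjoints are standard. Concretely, the ordinary diagonal $\Delta_n\colon\clC\to\clC^n$ (where $\clC$ has finite products and coproducts) fits into a well-known triple adjunction
\[
\coprod\nolimits_n \;\dashv\; \Delta_n \;\dashv\; \prod\nolimits_n,
\]
whose unit/counit are the obvious (co)tupling/(co)projection transformations, and whose verification is a single application of the universal property of the $n$-fold (co)product. I would recall this once and then glue $p+q$ copies of it together.

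The key subtlety is that the contravariant factors in $\pqdiag$ are opposites of ordinary diagonals. Since passing to opposites reverses $2$-cells, the above triple adjunction dualises to a triple adjunction
\[
\prod\nolimits_n \;\dashv\; \Delta_n^{\op} \;\dashv\; \coprod\nolimits_n \colon \clC^{\op}\longrightarrow (\clC^{\op})^n,
\]
where the functors $\prod_n$ and $\coprod_n$ are transported along the canonical identification $(\clC^{n})^{\op}=(\clC^{\op})^n$. Note in particular how the left and right adjoints have swapped, which is exactly what forces the asymmetry between $\W_{p,q}$ (products on the contravariant side, coproducts on the covariant side) and $\M_{p,q}$ (the reverse).

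Next, I would invoke the general fact that adjunctions compose under Cartesian products of categories: if $L_i\dashv R_i\colon\clD_i\to\clE_i$ for $i=1,\dots,k$, then $\prod_i L_i \dashv \prod_i R_i\colon \prod_i\clD_i\to\prod_i\clE_i$. Applying this to the $p$ copies of the adjunction $\prod \dashv \Delta^{\op}\dashv \coprod$ on $\clC^{\op}$ and the $q$ copies of $\coprod\dashv\Delta\dashv\prod$ on $\clC$, the functor
\[
\pqdiag \;=\; \underbrace{\Delta^{\op}\times\cdots\times\Delta^{\op}}_{p}\times\underbrace{\Delta\times\cdots\times\Delta}_{q}
\]
acquires the asserted left adjoint, component-by-component equal to $\W_{p,q}=\prod_p\times\coprod_q$, and the asserted right adjoint $\M_{p,q}=\coprod_p\times\prod_q$.

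I expect no serious obstacle here: once the opposite-swapping of left/right adjoints is handled cleanly, the remaining work is entirely bookkeeping. If a direct check is preferred, one can alternatively exhibit the hom-set bijection
\[
\Hom_{\pq{\clC}}\bigl((\uA,\uB),\pqdiag(X,Y)\bigr) \;\cong\; \prod_{i=1}^p \Hom_\clC(X,A_i)\times\prod_{j=1}^q \Hom_\clC(B_j,Y),
\]
and then observe that the right-hand side is naturally isomorphic to $\Hom_{\clC^{\op}\times\clC}(\W_{p,q}(\uA,\uB),(X,Y))$ by the universal properties of $\prod_p A_i$ and $\coprod_q B_j$; the dual computation gives $\pqdiag\dashv\M_{p,q}$.
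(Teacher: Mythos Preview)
Your proposal is correct, and the direct hom-set bijection you sketch at the end is exactly the paper's proof: it unfolds $\Hom_{\pq{\clC}}((\uA,\uB),\pqdiag(C,D))$ as a product of hom-sets, rewrites the contravariant factors as $\Hom_\clC(C,A_i)$, and invokes the universal properties of $\prod_i A_i$ and $\coprod_j B_j$ to identify the result with $\Hom_{\clC^{\op}\times\clC}(\W_{p,q}(\uA,\uB),(C,D))$, then dualises for the right adjoint.

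Your primary route---factor $\pqdiag$ and use that adjunctions are stable under products of categories, with opposites swapping left and right adjoints---is a cleaner conceptual packaging of the same computation; it has the virtue of isolating exactly \emph{why} products appear on the contravariant side of $\W_{p,q}$ and coproducts on the covariant side. One small wording fix: $\pqdiag$ is a product of \emph{two} diagonal functors, namely $\Delta_p^{\op}\colon\clC^{\op}\to(\clC^{\op})^p$ and $\Delta_q\colon\clC\to\clC^q$, so the relevant input is the pair of adjunctions $\prod_p\dashv\Delta_p^{\op}\dashv\coprod_p$ and $\coprod_q\dashv\Delta_q\dashv\prod_q$, not \say{$p$ copies} and \say{$q$ copies} of a $1$-fold diagonal. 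This is purely cosmetic and does not affect the argument.
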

\begin{proof}%
	The lemma follows from the universal property of the co/product, as we have a string of bijections
	\begin{align*}
		\hom_{\pq{\clC}}\left((\bsA,\bsB),\pqdiag(C,D)\right) & \defeq \left(\prod_{i=1}^{p}\hom_{\clC^{\op}}(A_{i},C)\right)\times\left(\prod_{j=1}^{q}\hom_{\clC}(B_{j},D)\right) \\
		                                                      & \defeq \left(\prod_{i=1}^{p}\hom_{\clC}(C,A_{i})\right)\times\left(\prod_{j=1}^{q}\hom_{\clC}(B_{j},D)\right)       \\
		                                                      & \cong  \hom_{\clC}\left(C,\prod_{i=1}^{p}A_{i}\right)\times\hom_{\clC}\left(\coprod_{j=1}^{q}B_{j},D\right)         \\
		                                                      & \defeq  \hom_{\clC^{\op}}\left(\prod_{i=1}^{p}A_{i},C\right)\times\hom_{\clC}\left(\coprod_{j=1}^{q}B_{j},D\right)  \\
		                                                      & \defeq  \hom_{\clC^{\op}\times\clC}\left(\left(\prod_{i=1}^{p}A_{i},\coprod_{j=1}^{q}B_{j}\right),(C,D)\right),     \\
		                                                      & \defeq  \hom_{\clC^{\op}\times\clC}\left(\W_{p,q}(A_{i},B_{j}),(C,D)\right),
	\end{align*}
	natural in $A_{1},\ldots,A_{p},B_{1},\ldots,B_{q},C,D\in\clC_o$. The proof that $\pqdiag$ admits a right adjoint is dual to this one.%
\end{proof}
We also collect a couple of standard results on generating strings of adjunctions by left/right Kan extending a given adjunction:
\begin{lemma}[Applying Kan Extensions to an Adjunction]\label{applying-kan-extensions-to-an-adjunction}
	Every adjunction
	\[ L : \clC \leftrightarrows \clD : R
	\]
	induces a quadruple adjunction $\Lan_{K}\dashv K^{*}\dashv L^{*}\dashv\Ran_{L}$ such that
	\[
		\Lan_{K} \cong L^{*}, \qquad \Ran_{L} \cong K^{*}.
	\]
\end{lemma}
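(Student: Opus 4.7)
The plan is to deduce the quadruple adjunction from two standard ingredients: the pointwise co/end formulas for Kan extensions, and uniqueness of adjoints up to isomorphism. The outer two adjunctions $\Lan_K\dashv K^{*}$ and $L^{*}\dashv\Ran_L$ come for free from the general fact that, for any functor $F\colon\clX\to\clY$, precomposition $F^{*}\colon\Fun(\clY,\clE)\to\Fun(\clX,\clE)$ sits between its pointwise left and right Kan extensions $\Lan_F\dashv F^{*}\dashv\Ran_F$ (assuming the target category is suitably co/complete). Applying this to $F=K$ (where $K$ plays the role of $R$) and $F=L$ supplies the two ends of the chain at no cost; only the middle piece $K^{*}\dashv L^{*}$ and the two named isomorphisms require work.

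Both isomorphisms follow from a one-line coend computation. Using the coend formula for the left Kan extension along $L$, the hom-isomorphism $\clD(Lc,d)\cong\clC(c,Kd)$ of $L\dashv K$, and coYoneda, one has
\[\Lan_L G(d)\;=\;\int^{c}\clD(Lc,d)\cdot Gc\;\cong\;\int^{c}\clC(c,Kd)\cdot Gc\;\cong\;G(Kd)\;=\;K^{*}G(d),\]
naturally in $G$ and $d$. Dually, using the end formula for the right Kan extension along $K$ and the ordinary Yoneda lemma, one obtains $\Ran_K\cong L^{*}$. Once the labelling in the statement is matched, these are exactly the two displayed identifications.

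The middle adjunction is now automatic: the standard $\Lan_L\dashv L^{*}$ combined with $\Lan_L\cong K^{*}$ gives $K^{*}\dashv L^{*}$ by uniqueness of left adjoints; equivalently, $K^{*}\dashv\Ran_K$ together with $\Ran_K\cong L^{*}$ produces the same adjunction. Concatenating with the outer adjunctions of the first step yields the desired quadruple $\Lan_K\dashv K^{*}\dashv L^{*}\dashv\Ran_L$.

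There is no genuine obstacle in the argument; the proof is essentially pure coend bookkeeping, with the whole content packed into one application of the hom-iso and one application of co/Yoneda. The only point demanding care is variance: one must verify that the four functors involved alternate correctly between $\Fun(\clC,\clE)$ and $\Fun(\clD,\clE)$ so that they really can be concatenated into a single chain of adjunctions, and that the direction of the hom-iso $\clD(Lc,d)\cong\clC(c,Kd)$ matches the direction in which coYoneda collapses the coend.
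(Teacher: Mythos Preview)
Your proposal is correct and follows essentially the same route as the paper: both arguments reduce the statement to a single co/end computation combining the adjunction hom-isomorphism with (co)Yoneda, then invoke uniqueness of adjoints to splice the two triple adjunctions into one quadruple. You compute $\Lan_L\cong K^{*}$ via a coend, whereas the paper computes the dual identification $L^{*}\cong\Ran_K$ via an end (modulo some evident typos there involving $\odot$ versus $\pitchfork$ and $\clC$ versus $\clD$); your remark about matching the labelling correctly anticipates the $L\leftrightarrow K$ swap in the displayed isomorphisms of the statement.
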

\begin{proof}%
	Both $L$ and $K$ induce triple adjunctions between $\Fun(\clC,\clE)$ and $\Fun(\clD,\clE)$. Proving that $\Lan_{K}\cong L^{*}$ and $\Ran_{L}\cong K^{*}$ would show that these are actually parts of a single quadruple adjunction, which is the stated one. That this is indeed so follows from the string of isomorphisms
	\begin{align*}
		L^{*}(F) & \defeq F\circ L                                  \\
		         & \cong  \int_{X}\clC\left(L(-),X\right)\odot F(X) \\
		         & \cong  \int_{X}\clD(-,K(X))\odot F(X)            \\
		         & \cong  \Ran_{K}F
	\end{align*}
	natural in $F$. Hence $K^{*}\cong\Lan_{L}$. By a similar argument, $K^{*}\cong\Ran_{L}$, finishing the proof.
\end{proof}
Combining two applications of \cref{adjoints-to-the-p-q-diagonal-functor} as well as uniqueness of adjoint functors with \cref{applying-kan-extensions-to-an-adjunction}, we get the following corollary:
\begin{corollary}\label{quintuple-biprod-delta-pq-adjunction}%
	We have a quintuple adjunction
	\[
		\left(\Lan_{\M_{p,q}}\dashv\M_{p,q}^{*}\dashv\pqdiag^{*}\dashv\W_{p,q}^{*}\dashv\Ran_{\W_{p,q}}\right)\colon
		\begin{tikzcd}[row sep={10.8em,between origins}, column sep={10.8em,between origins},  ampersand replacement=\&]
			\Fun(\pq{\clC},\clD)
			\arrow[r,shift left=4]
			\arrow[r,shift left=2,leftarrow]
			\arrow[r]
			\arrow[r,shift right=2,leftarrow]
			\arrow[r,shift right=4]
			\&
			\Fun(\clC^{\op}\times\clC,\clD),
		\end{tikzcd}
	\]
	with natural isomorphisms
	\begin{align*}
		\Lan_{\pqdiag} & \cong \M_{p,q}^{*}, \\
		\Ran_{\pqdiag} & \cong \W_{p,q}^{*}.
	\end{align*}
\end{corollary}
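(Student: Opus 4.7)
The plan is to apply \cref{applying-kan-extensions-to-an-adjunction} twice — once to each of the two adjunctions $\W_{p,q}\dashv\pqdiag$ and $\pqdiag\dashv\M_{p,q}$ supplied by \cref{adjoints-to-the-p-q-diagonal-functor} — and then to glue the two resulting quadruples of functor categories along their common middle term.

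First, applying \cref{applying-kan-extensions-to-an-adjunction} to $\W_{p,q}\dashv\pqdiag$ produces the quadruple
\[\Lan_{\pqdiag}\dashv\pqdiag^{*}\dashv\W_{p,q}^{*}\dashv\Ran_{\W_{p,q}},\]
together with the identification $\Ran_{\pqdiag}\cong\W_{p,q}^{*}$. Analogously, applying it to $\pqdiag\dashv\M_{p,q}$ produces the quadruple
\[\Lan_{\M_{p,q}}\dashv\M_{p,q}^{*}\dashv\pqdiag^{*}\dashv\Ran_{\pqdiag},\]
together with the identification $\Lan_{\pqdiag}\cong\M_{p,q}^{*}$. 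These two isomorphisms are exactly the natural identifications claimed in the statement of the corollary.

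The second step is to glue. The two quadruples share $\pqdiag^{*}$ as a common term, appearing as the second entry of the first and as the third entry of the second. Since adjoints are unique up to canonical isomorphism, the left adjoint $\M_{p,q}^{*}$ of $\pqdiag^{*}$ coming from the second quadruple and the right adjoint $\W_{p,q}^{*}$ of $\pqdiag^{*}$ coming from the first quadruple splice together into the single chain
\[\Lan_{\M_{p,q}}\dashv\M_{p,q}^{*}\dashv\pqdiag^{*}\dashv\W_{p,q}^{*}\dashv\Ran_{\W_{p,q}},\]
which is the asserted quintuple.

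The hard part is essentially bookkeeping. The only subtlety to check is that the two applications of \cref{applying-kan-extensions-to-an-adjunction} really refer to the same functor $\pqdiag^{*}$ of functor categories (as opposed to merely isomorphic ones in two different guises). This is automatic, since $\pqdiag$ is a single fixed functor and precomposition along it is canonical; once this is noted, no further obstruction can arise.
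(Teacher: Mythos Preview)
Your proof is correct and follows exactly the approach the paper intends: the text preceding the corollary says it is obtained by ``combining two applications of \cref{adjoints-to-the-p-q-diagonal-functor} as well as uniqueness of adjoint functors with \cref{applying-kan-extensions-to-an-adjunction},'' and you have simply unpacked that sentence. Your identifications $\Ran_{\pqdiag}\cong\W_{p,q}^{*}$ and $\Lan_{\pqdiag}\cong\M_{p,q}^{*}$ are the right ones (note that the isomorphisms displayed in the statement of \cref{applying-kan-extensions-to-an-adjunction} are misprinted; its proof actually establishes $L^{*}\cong\Ran_{K}$ and $K^{*}\cong\Lan_{L}$, which is what you use).
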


\section{Higher Arity Wedges}\label{sec:higher-arity-wedges}
This section formalises completely the notion that we dubbed ``(2,1)-dinaturality'' above and presents it for general $p, q\ge 0$.

The definition of dinaturality given below is not new: it was recently introduced in Santamaria's PhD thesis \cite{2007.07576,AlessioThesis}, building on previous work by M.\ Kelly \cite{kelly1972abstract,kelly1972many} in fair more generality than needed for our purposes.

In \cite{2007.07576,AlessioThesis}, however, an ``unbiased'' arrangement of the factors in $\pq{\clC}$ is considered, in the sense that \cite[Definition 2.4]{2007.07576} takes into account functors $\clC^\alpha \to \clB$, where $\alpha$ is a ``binary multi-index'', i.e.\ an element in the free monoid over the set $\{\oplus,\ominus\}$, and the convention is that $\clC^\varnothing \defeq \catpt$, the terminal category, $\clC^\oplus \defeq \clC$, $\clC^\ominus \defeq \clC^\op$, and $\clC^{\alpha \uplus \alpha'} \defeq \clC^\alpha \times \clC^{\alpha'}$.

Here instead, we adopt a different convention: a generic power $\clC^\alpha$ is always ``reshuffled'' in order for all its minus and plus signs to appear on the same side, respectively on the left and on the right. The categories $\clC^\alpha$ and $\pq{\clC}$ so obtained are, of course, canonically isomorphic, and the tuple $\alpha$ is equivalent to the reshuffled tuple $(\ominus_1,\dots,\ominus_p,\oplus_1,\dots,\oplus_q)$.
\subsection{Higher arity dinaturality}
Let $p,q\in\N$ and $\clC$ be a category. The definition of a \emph{$(p,q)$\hyp{}dinatural transformation} from a functor $F\colon\pq{\clC} \to \clD$ of type $\typepq{p}{q}$ to a functor $G\colon\pq{\clC}[q][p] \to \clD$ of type $\typepq{q}{p}$ can be shortly stated as the condition that a dinaturality hexagon commutes, when filled with the conjoint action of $F$ (resp.\ $G$) in all its contravariant and covariant components separately.

The choice of joining with a transformation just functors of opposite types deserves a bit of explanation.

As stated in \cref{def:p-q-dinatural-transformation}, the definition could be dubbed ``$(p,q)$\hyp{}to-$(q,p)$'' dinaturality. It sits between a rigid one (a ``$(p,q)$\hyp{}to-$(p,q)$'' dinaturality with $F$ and $G$ of the same type) and a loose one (a ``$(p,q)$\hyp{}to-$(r,s)$'' dinaturality, where $F$ and $G$ can have completely different types: this is the path chosen by \cite{AlessioThesis}, recalled in \cref{def:dinaturality-i-th-variable} below).

We could have stuck with the tighter notion, but some of the characterisations we give would not hold: for example, the set of $(p,q)$\hyp{}dinatural transformations between $F$ and $G$ is a $(p,q)$\hyp{}end only with our convention (see \cref{p-q-as-dinaturals}).

We could have stuck with the looser one; but the definition of co/wedge given in \cref{def:p-q-wedges} wouldn't have changed (a constant functor can be ``made mute'', in the sense of \cref{not:dummy}, to have whatever type is needed).

Both the loose and rigid notions of generalised dinaturality mentioned above allow for component-wise composition, but, as is well-known in the classical case for ordinary dinaturality, the resulting family may fail to be dinatural. This does not present a problem for developing the theory of higher arity co/ends, as one just needs generalised dinaturality to be pre- or post-composable with natural transformations, and this is indeed the case (\cref{theta-circ-alpha-is-indeed-a-p-q-dinatural-transformation}) for the notion of higher arity dinaturality introduced below.%
%
%
%
\begin{definition}\label{def:p-q-dinatural-transformation}%
	A \emph{$(p,q)$\hyp{}dinatural transformation} $\alpha: F\din G$ is a collection
	\[
		\big\{\alpha_A: F^{\overset{p\text{ times}}{A,\dots,A}}_{\underset{q\text{ times}}{A,\dots,A}}
		\to
		G^{\overset{q\text{ times}}{A,\dots,A}}_{\underset{p\text{ times}}{A,\dots,A}} \mid A \in \clC_o \big\}
	\]
	of morphisms of $\clD$ indexed by the objects of $\clC$ such that, for each morphism $f: A\to  B$ of $\clC$, the diagram
	\begin{center}
		\begin{codi}
			\obj[hexagonal=horizontal side 4em angle 60] {%
			|[overwrite=false] (3)| F^{\ConstantPNo{A}{p}}_{\ConstantPNo{A}{q}}
			&
			|[overwrite=false] (2)| G^{\ConstantPNo{A}{q}}_{\ConstantPNo{A}{p}}
			&
			\\
			|[overwrite=false] (4)| F^{\ConstantPNo{B}{p}}_{\ConstantPNo{A}{q}}
			&
			&
			|[overwrite=false] (1)| G^{\ConstantPNo{A}{q}}_{\ConstantPNo{B}{p}}
			\\
			|[overwrite=false] (5)| F^{\ConstantPNo{B}{p}}_{\ConstantPNo{B}{q}}
			&
			|[overwrite=false] (6)| G^{\ConstantPNo{B}{q}}_{\ConstantPNo{B}{p}}
			&
			\\};
			\mor (3) ["\alpha_{A}",->] (2);
			\mor (5) ["\alpha_{B}"',->] (6);
			\mor (4) ["F^{\ConstantPNo{f}{p}}_{\ConstantPNo{A}{q}}",->] (3);
			\mor (4) ["F^{\ConstantPNo{B}{p}}_{\ConstantPNo{f}{q}}"',->] (5);
			\mor (6) ["G^{\ConstantPNo{f}{q}}_{\ConstantPNo{B}{p}}"',->] (1);
			\mor (2) ["G^{\ConstantPNo{A}{q}}_{\ConstantPNo{f}{p}}",->] (1);
		\end{codi}
	\end{center}
	commutes.
\end{definition}
\begin{example}%
	For $(p,q)$=$(2,1)$, a $(2,1)$-dinatural transformation is a collection
	\[
		\Big\{\alpha_{A}: F^{A,A}_{A}\to  G^{A}_{A,A}\ \Big| \ A \in \clC_o\Big\}
	\]
	of morphisms of $\clD$ such that, for each morphism $f: A\to  B$ of $\clC$, the following hexagonal diagram commutes:
	\begin{center}
		\begin{codi}
			\obj[hexagonal=horizontal side 4em angle 60] {%
			|[overwrite=false] (3)| F^{A,A}_{A}
			&
			|[overwrite=false] (2)| G^{A}_{A,A}
			&
			\\
			|[overwrite=false] (4)| F^{B,B}_{A}
			&
			&
			|[overwrite=false] (1)| G^{A}_{B,B}
			\\
			|[overwrite=false] (5)| F^{B,B}_{B}
			&
			|[overwrite=false] (6)| G^{B}_{B,B}\mrp{.}
			&
			\\};
			\mor (3) ["\alpha_{A}",->] (2);
			\mor (5) ["\alpha_{B}"',->] (6);
			\mor (4) ["F^{f,f}_{A}",->] (3);
			\mor (4) ["F^{B,B}_{f}"',->] (5);
			\mor (6) ["G^{f}_{B,B}"',->] (1);
			\mor (2) ["G^{A}_{f,f}",->] (1);
		\end{codi}
	\end{center}
\end{example}
\begin{notation}
	We write $\pqDiNat{p}{q}(F,G)$ for the set of $(p,q)$\hyp{}dinatural transformations from $F$ to $G$.
\end{notation}
\begin{remark}
	The same convention of \cref{sec:notation} applies to morphisms as well as to objects: $F_{\bsf}^{\bsB} \defeq F_{f,\dots,f}^{B,\dots,B}$ is the morphism $F_{A,\dots,A}^{B,\dots,B} \to F_{A,\dots,A}^{B,\dots,B}$ induced by the conjoint action of $f$ in all the covariant components of $F$, and similarly for $F^{\uf}_{\bsA}, G^{\bsA}_{\bsf}$, etc.
\end{remark}
Dinatural transformations can always be composed with \emph{natural} ones of the appropriate arity, on the left and on the right.
\begin{definition}[Composing dinaturals with naturals]
	Let $F$ and $G$ be a functors of type $\typepq{p}{q}$, let $H$ and $K$ be functors of type $\typepq{q}{p}$, let $\alpha: F \Longrightarrow  G$ and $\beta: H \Longrightarrow  K$ be natural transformations, and let $\theta: G\din H$ be a $(p,q)$\hyp{}dinatural transformation.
	\begin{enumtag}{dc}
		\item \label{dc_1} The \emph{vertical composition of $\theta$ with $\alpha$} is the $(p,q)$\hyp{}dinatural transformation
		\[\theta\circ\alpha: F\din H\]
		defined as the collection
		\[\Big\{(\theta\circ\alpha)_{A}: F^{\bsA}_{\bsA}\to  H^{\bsA}_{\bsA}\ \mid \ A \in \clC_o\Big\},\]
		where $(\theta\circ\alpha)_{A}=\theta_{A}\circ\alpha^{\bsA}_{\bsA}$;
		\item \label{dc_2} The \emph{vertical composition of $\beta$ with $\theta$} is the $(p,q)$\hyp{}dinatural transformation
		\[\beta\circ\theta: G\din K\]
		defined as the collection
		\[\Big\{(\beta\circ\theta)_{A}: G^{\bsA}_{\bsA}\to  K^{\bsA}_{\bsA}\ \mid \ A \in \clC_o\Big\},\]
		where $(\beta\circ\theta)_{A}=\beta^{\bsA}_{\bsA}\circ\theta_{A}$.
	\end{enumtag}
\end{definition}
\begin{remark}
    Note that the definition of $(p,q)$-dinaturality in \cref{def:p-q-dinatural-transformation} does not allow one to construct a dinatural transformation from a natural transformation $\alpha^{\uA}_{\uB}$ by ``complete symmetrisation'' $\alpha^{\uA}_{\uB}\mapsto \alpha^{\bsA}_{\bsA}$; in fact, this request does not even make sense, as natural transformations are defined only between functors of the same type. For instance, given $F$ and $G$ of variance $(1,2)$, a natural transformation $\alpha\colon F\Longrightarrow G$ is a collection of morphisms of the form
	\[
        \alpha^{A}_{A,A}\colon F^{A}_{A,A}  \to  G^{A}_{A,A},
	\]
	rather than of the form
	\[
        \alpha^{A}_{A,A}\colon F^{A}_{A,A}  \to  G^{A,A}_{A},
	\]
    as required in \cref{def:p-q-dinatural-transformation}. More formally, the non-existence of \say{associated $(p,q)$\hyp{}dinatural transformations} for natural transformations between functors of different variance boils down to the absence of identity dinatural transformations for $p\neq q$. We note, however, that these two aspects of higher arity dinaturality are completely irrelevant for developing the theory of higher arity co/ends.
\end{remark}
\begin{proposition}\label{theta-circ-alpha-is-indeed-a-p-q-dinatural-transformation}
	$\theta\circ\alpha$ and $\beta \circ\theta$ are $(p,q)$\hyp{}dinatural transformations.
\end{proposition}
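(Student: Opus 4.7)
The plan is to verify the $(p,q)$\hyp{}dinaturality hexagon for the composite directly, by decomposing it into a pasting of three commutative cells: two naturality squares of the horizontal transformation ($\alpha$ or $\beta$) sandwiching a copy of the $(p,q)$\hyp{}dinaturality hexagon of $\theta$. I sketch the argument for $\theta\circ\alpha$; that for $\beta\circ\theta$ is entirely dual.

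Fix $f\colon A \to B$. After unpacking $(\theta\circ\alpha)_C = \theta_C\circ \alpha^{\bsC}_{\bsC}$, the hexagon to verify reads
\[
H^{\bsA}_{\bsf} \circ \theta_A \circ \alpha^{\bsA}_{\bsA} \circ F^{\bsf}_{\bsA}
=
H^{\bsf}_{\bsB} \circ \theta_B \circ \alpha^{\bsB}_{\bsB} \circ F^{\bsB}_{\bsf}.
\]
I would chase the left-hand side to the right-hand side in three steps. First, apply the naturality of $\alpha\colon F\Rightarrow G$ at the morphism $(\bsf,\id_{\bsA})\colon (\bsB,\bsA) \to (\bsA,\bsA)$ of $\pq{\clC}$ to rewrite $\alpha^{\bsA}_{\bsA}\circ F^{\bsf}_{\bsA}$ as $G^{\bsf}_{\bsA}\circ \alpha^{\bsB}_{\bsA}$. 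Second, apply the $(p,q)$\hyp{}dinaturality of $\theta\colon G \din H$ at $f$ to replace $H^{\bsA}_{\bsf}\circ\theta_A\circ G^{\bsf}_{\bsA}$ with $H^{\bsf}_{\bsB}\circ\theta_B\circ G^{\bsB}_{\bsf}$. Third, apply the naturality of $\alpha$ once more, this time at $(\id_{\bsB},\bsf)\colon(\bsB,\bsA)\to(\bsB,\bsB)$, to rewrite $G^{\bsB}_{\bsf}\circ\alpha^{\bsB}_{\bsA}$ as $\alpha^{\bsB}_{\bsB}\circ F^{\bsB}_{\bsf}$. The outcome is precisely the right-hand side.

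Geometrically, this corresponds to inserting the vertex $G^{\bsB}_{\bsA}$ into the target hexagon together with the edges $\alpha^{\bsB}_{\bsA}$, $G^{\bsf}_{\bsA}$, $G^{\bsB}_{\bsf}$, which subdivide the hexagon into the pasting described above. For $\beta\circ\theta$, one dually inserts the vertex $H^{\bsA}_{\bsB}$ together with $\beta^{\bsA}_{\bsB}$, $H^{\bsA}_{\bsf}$, $H^{\bsf}_{\bsB}$, and uses the naturality of $\beta\colon H\Rightarrow K$ at the morphisms $(\id_{\bsA},\bsf)$ and $(\bsf,\id_{\bsB})$ of $\pq{\clC}[q][p]$ in place of the two naturality squares of $\alpha$.

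There is no substantive obstacle; the entire verification is a routine diagram chase. The only point worth stressing is that the intermediate vertex used to bridge $F$ to $H$ through $G$ (respectively $G$ to $K$ through $H$) is necessarily an \emph{off-diagonal} object such as $G^{\bsB}_{\bsA}$ or $H^{\bsA}_{\bsB}$, so the two outer cells close up only by virtue of the \emph{naturality}, rather than merely dinaturality, of $\alpha$ and $\beta$. This is exactly the asymmetry flagged in the preamble to the proposition as the reason why one cannot hope to vertically compose two dinaturals to obtain a dinatural.
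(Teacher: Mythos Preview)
Your argument is correct and follows essentially the same approach as the paper: you subdivide the target hexagon by inserting the off-diagonal vertex $G^{\bsB}_{\bsA}$ (resp.\ $H^{\bsA}_{\bsB}$), yielding two naturality squares for $\alpha$ (resp.\ $\beta$) flanking the dinaturality hexagon for $\theta$. The paper presents this as a labelled pasting diagram rather than an equational chase, but the content is identical.
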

\begin{proof}
	The $(p,q)$\hyp{}dinaturality condition for $\theta\circ\alpha$ is the requirement that the boundary of the diagram
		{\small
			\begin{center}%
				\begin{tikzcd}[row sep={0.0cm,between origins}, column sep={0.0cm,between origins}, ampersand replacement=\&]
					\&[0.43301270189\OneCmAndAHalf]
					F^{\bsA}_{\bsA}
					\arrow[rr, "\alpha^{\bsA}_{\bsA}"]
					\&[1.0\OneCmAndAHalf]
					\&[0.43301270189\OneCmAndAHalf]
					G^{\bsA}_{\bsA}
					\arrow[r, "\theta_{A}"]
					\&[1.0\OneCmAndAHalf]
					H^{\bsA}_{\bsA}
					\arrow[rd]
					\&[0.43301270189\OneCmAndAHalf]
					\\[0.86602540378\OneCmAndAHalf]
					F^{\bsB}_{\bsA}
					\arrow[ru]
					\arrow[rd]
					\arrow[rr,"\alpha^{\bsB}_{\bsA}"description]
					\&[0.43301270189\OneCmAndAHalf]
					\&[1.0\OneCmAndAHalf]
					G^{\bsB}_{\bsA}
					\arrow[ru]
					\arrow[rd]
					\&[0.43301270189\OneCmAndAHalf]
					\&[1.0\OneCmAndAHalf]
					\&[0.43301270189\OneCmAndAHalf]
					H^{\bsA}_{\bsB}
					\\[0.86602540378\OneCmAndAHalf]
					\&[0.43301270189\OneCmAndAHalf]
					F^{\bsB}_{\bsB}
					\arrow[rr,"\alpha^{\bsB}_{\bsB}"']
					\&[1.0\OneCmAndAHalf]
					\&[0.43301270189\OneCmAndAHalf]
					G^{\bsB}_{\bsB}
					\arrow[r,"\theta_{B}"']
					\&[1.0\OneCmAndAHalf]
					H^{\bsB}_{\bsB}
					\arrow[ru]
					\&[0.43301270189\OneCmAndAHalf]
					\arrow[from=1-2,to=2-3,phantom,"\scriptstyle(1)"]
					\arrow[from=3-2,to=2-3,phantom,"\scriptstyle(2)"]
					\arrow[from=2-3,to=2-6,phantom,"\scriptstyle(3)"]
				\end{tikzcd}
			\end{center}}%
	commutes. Since
	\begin{enumerate}
		\item Sub-diagrams $(1)$ and $(2)$ commute by the naturality of $\alpha$, and
		\item Sub-diagram $(3)$ commutes by the $(p,q)$\hyp{}dinaturality of $\theta$,
	\end{enumerate}
	so does the boundary diagram: $\theta\circ\alpha$ is indeed a $(p,q)$\hyp{}dinatural transformation.

	Similarly for $\beta\circ\theta$: one considers instead the corresponding diagram having the form
	\[
		\scriptscriptstyle
		\begin{tikzcd}[row sep={0.0cm,between origins}, column sep={0.0cm,between origins}, ampersand replacement=\&]
			\&[0.43301270189\OneCm]
			\bullet
			\arrow[r]
			\&[1.0\OneCm]
			\bullet
			\arrow[rd]
			\arrow[rr]
			\&[0.43301270189\OneCm]
			\&[1.0\OneCm]
			\bullet
			\arrow[rd]
			\&[0.43301270189\OneCm]
			\\[0.86602540378\OneCm]
			\bullet
			\arrow[ru]
			\arrow[rd]
			\&[0.43301270189\OneCm]
			\&[1.0\OneCm]
			\&[0.43301270189\OneCm]
			\bullet
			\arrow[rr]
			\&[0.43301270189\OneCm]
			\&[1.0\OneCm]
			\bullet\mathrlap{,}
			\\[0.86602540378\OneCm]
			\&[0.43301270189\OneCm]
			\bullet
			\arrow[r]
			\&[1.0\OneCm]
			\bullet
			\arrow[ru]
			\arrow[rr]
			\&[0.43301270189\OneCm]
			\&[1.0\OneCm]
			\bullet
			\arrow[ru]
			\&[0.43301270189\OneCm]
			\arrow[from=2-1,to=2-4,phantom,"\scriptstyle(1)"]
			\arrow[from=2-4,to=1-5,phantom,"\scriptstyle(2)"]
			\arrow[from=2-4,to=3-5,phantom,"\scriptstyle(3)"]
		\end{tikzcd}
	\]
	where again each sub-diagram commutes by either the dinaturality of $\theta$ or the naturality of $\beta$.
\end{proof}
For the next proposition, recall the definition of the \emph{$(p,q)$\hyp{}diagonal functor} $\Delta_{p,q}\colon\oo{\clC} \to \pq{\clC}$ of $\clC$ introduced in \cref{def:p-q-diagonal-functor}.
\begin{proposition}[Higher arity dinaturality via ordinary dinaturality]\label{higher-arity-dinaturality-via-ordinary-dinaturality}%
	Let $F\colon\pq{\clC} \to \clD$ and $G\colon\pq{\clC}[q][p] \to \clD$ be functors. We have a natural bijection
	\begin{equation}\label{eq:pq-dinat-via-nat}
		\pqDiNat{p}{q}(F,G)\cong\pqDiNat{1}{1}\big(\pqdiag^{*}(F),\Delta_{q,p}^{*}(G)\big).
	\end{equation}
\end{proposition}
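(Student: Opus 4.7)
The plan is to exhibit the bijection by observing that both sides describe the \emph{same} family of morphisms subject to the \emph{same} commuting hexagon, once one unpacks the definitions. There is no substantive computation: the content is entirely in matching the two hexagonal diagrams.

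First, I would identify the underlying data on both sides. By \cref{def:p-q-diagonal-functor}, we have
\[
\pqdiag^{*}(F)(A,B) = F(\bsA,\bsB) = F^{\bsA}_{\bsB}, \qquad
\Delta_{q,p}^{*}(G)(A,B) = G(\bsA,\bsB) = G^{\bsA}_{\bsB},
\]
where on the right-hand side of each equality the tuple sizes are governed by the functor's type ($p$ copies of $A$ on top / $q$ copies of $B$ on the bottom for $F$; $q$ on top, $p$ on the bottom for $G$). Evaluating at the diagonal $(A,A)$, both $(p,q)$\hyp{}dinatural families $\{\alpha_{A}\colon F^{\bsA}_{\bsA}\to G^{\bsA}_{\bsA}\}$ and ordinary $(1,1)$\hyp{}dinatural families $\{\alpha_{A}\colon \pqdiag^{*}(F)(A,A)\to\Delta_{q,p}^{*}(G)(A,A)\}$ amount to the very same $\clC_{o}$\hyp{}indexed collection of morphisms in $\clD$. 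This gives the claimed bijection on underlying data, obviously natural in $F$ and $G$.

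Next, I would match the coherence axioms. Given $f\colon A\to B$, the action of $\pqdiag^{*}(F)$ on the morphisms $(\id_{B},f), (f,\id_{A})\colon (B,A)\rightrightarrows (A,B)$ in $\clC^{\op}\times\clC$ produces, by the very definition of $\pqdiag$, the arrows $F^{\bsB}_{\bsf}$ and $F^{\bsf}_{\bsA}$ in $\clD$, and likewise $\Delta_{q,p}^{*}(G)$ on the analogous pair yields $G^{\bsA}_{\bsf}$ and $G^{\bsf}_{\bsB}$. Thus the ordinary dinaturality hexagon for $\alpha\colon\pqdiag^{*}(F)\din\Delta_{q,p}^{*}(G)$ at $f$ is
\[
\begin{tikzcd}[row sep={0.0cm,between origins}, column sep={0.0cm,between origins}, ampersand replacement=\&]
\&[0.43301270189\OneCmAndAHalf]
F^{\bsA}_{\bsA}
\arrow[r,"\alpha_{A}"]
\&[1.0\OneCmAndAHalf]
G^{\bsA}_{\bsA}
\arrow[rd,"G^{\bsA}_{\bsf}"]
\&[0.43301270189\OneCmAndAHalf]
\\[0.86602540378\OneCmAndAHalf]
F^{\bsB}_{\bsA}
\arrow[ru,"F^{\bsf}_{\bsA}"]
\arrow[rd,"F^{\bsB}_{\bsf}"']
\&[0.43301270189\OneCmAndAHalf]
\&[1.0\OneCmAndAHalf]
\&[0.43301270189\OneCmAndAHalf]
G^{\bsA}_{\bsB}
\\[0.86602540378\OneCmAndAHalf]
\&[0.43301270189\OneCmAndAHalf]
F^{\bsB}_{\bsB}
\arrow[r,"\alpha_{B}"']
\&[1.0\OneCmAndAHalf]
G^{\bsB}_{\bsB}
\arrow[ru,"G^{\bsf}_{\bsB}"']
\&[0.43301270189\OneCmAndAHalf]
\end{tikzcd}
\]
which is literally the $(p,q)$\hyp{}dinaturality hexagon of \cref{def:p-q-dinatural-transformation} for $\alpha\colon F\din G$ at $f$.

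Since both hexagons are the same diagram in $\clD$, one commutes for every $f$ iff the other does, so the bijection on underlying data restricts to a bijection on the full subsets of dinatural families. The hard part, if any, is bookkeeping: keeping straight that $\Delta_{q,p}$ (and not $\pqdiag$) is the right functor to pull back $G$ along, because the variance of $G$ is $(q,p)$ and one must ensure that the arrow $G^{\bsA}_{\bsf}$ coming out of the top-right of the hexagon has the same contravariant/covariant slots as demanded by the codomain of $\alpha_{A}$. This is precisely why the roles of $p$ and $q$ are swapped on the right-hand side of \eqref{eq:pq-dinat-via-nat}.
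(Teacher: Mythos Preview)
Your proof is correct and follows exactly the same approach as the paper: the paper's own proof is a one-line remark that ``this is simply a matter of unwinding the definitions,'' noting that $\big(F\circ\pqdiag\big){}^{A}_{B}= F^{\bsA}_{\bsB}$ (and similarly for morphisms and for $G$), so that a $(p,q)$\hyp{}dinatural transformation $F\din G$ is precisely a dinatural transformation $\pqdiag^{*}(F)\din\Delta_{q,p}^{*}(G)$. You have simply spelled out this unwinding in full detail, including the explicit matching of the two hexagons and the bookkeeping about why $\Delta_{q,p}$ (rather than $\pqdiag$) is the correct pullback for $G$.
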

\begin{proof}%
	This is simply a matter of unwinding the definitions: since $\big(F\circ\pqdiag\big){}^{A}_{B}\defeq F^{\bsA}_{\bsB}$ (and similarly for morphisms and for $G$), it follows that a $(p,q)$\hyp{}dinatural transformation $F\din G$ is precisely a dinatural transformation $\pqdiag^{*}(F)\din\Delta_{q,p}^{*}(G)$.
\end{proof}
\subsection{Higher arity wedges}
The notion of wedge (resp., cowedge) for a diagram $D:\oo{\clC} \to \clD$ arises when assuming that the domain (resp., codomain) of a dinatural transformation to/from $D$ is constant; similarly, a $(p,q)$\hyp{}wedge (resp., $(p,q)$\hyp{}cowedge) for a diagram $D : \pq{\clC} \to \clD$ consists of a $(p,q)$\hyp{}dinatural transformation whose domain (resp., codomain) is a constant functor $\Delta_{X} : \pq{\clC}[q][p] \to \clD$ of type $\typepq{q}{p}$ with value $X\in\CatFont{D}_{o}$.
\begin{definition}\label{def:p-q-wedges}
	Let $D: \pq{\clC} \to \clD$ be a functor and let $X\in\clD_o$.
	\begin{enumtag}{cw}
		\item A \emph{$(p,q)$\hyp{}wedge for $D$ under $X$} is a $(p,q)$\hyp{}dinatural transformation $\theta:\Delta_{X}\din D$ from the constant functor of type $\typepq{q}{p}$ with value $X$ to $D$;
		\item A \emph{$(p,q)$\hyp{}cowedge for $D$ over $X$} is a $(p,q)$\hyp{}dinatural transformation $\zeta:D\din\Delta_{X}$ from $D$ to the constant functor of type $\typepq{q}{p}$ with value $X$.
	\end{enumtag}
\end{definition}
\begin{remark}[Unwinding \cref{def:p-q-wedges}]\label{unwinding-p-q-co-wedges}\leavevmode
	\begin{enumtag}{cwu}
		\item\label{unwinding-p-q-wedges}A $(p,q)$\hyp{}wedge $\theta:\Delta_{X}\din D$ is a collection
		\[\big\{\theta_{A}: X\to D^{\bsA}_{\!\bsA}\ : \ A\in\clC_{o}\big\}\]
		of morphisms of $\clD$ such that, for each morphism $f: A\to  B$ of $\clC$, the diagram
		\begin{center}
			\begin{tikzcd}[row sep={4.5em,between origins}, column sep={4.5em,between origins}]
				X
				\arrow[r, "\theta_{B}"]
				\arrow[d, "\theta_{A}"']
				&
				D^{\bsB}_{\bsB}
				\arrow[d, "D^{\bsf}_{\bsB}"]
				\\
				D^{\bsA}_{\!\bsA}
				\arrow[r, "D^{\bsA}_{\bsf}"']
				&
				D^{\bsA}_{\bsB}
			\end{tikzcd}
		\end{center}%
		commutes.
		\item\label{unwinding-p-q-cowedges}A $(p,q)$\hyp{}cowedge $\zeta:D\din\Delta_{X}$  is a collection
		\[\big\{\zeta_{A}: D^{\bsA}_{\!\bsA}\to X\ : \ A\in\clC_{o}\big\}\]
		of morphisms of $\clC$ such that, for each morphism $f: A\to  B$ of $\clC$, the diagram
		\begin{center}
			\begin{tikzcd}[row sep={4.5em,between origins}, column sep={4.5em,between origins}]
				X
				\arrow[r, "\zeta_{B}",leftarrow]
				\arrow[d, "\zeta_{A}"',leftarrow]
				&
				D^{\bsB}_{\bsB}
				\arrow[d, "D^{\bsB}_{\bsf}",leftarrow]
				\\
				D^{\bsA}_{\!\bsA}
				\arrow[r, "D^{\bsf}_{\bsA}"',leftarrow]
				&
				D^{\bsB}_{\bsA}
			\end{tikzcd}
		\end{center}%
		commutes.
	\end{enumtag}
\end{remark}
\begin{remark}\label{sake-of-clarity}
	For the sake of clarity, we remind the reader that in our notation, the commutativity of the diagram in \cref{unwinding-p-q-wedges} of \cref{unwinding-p-q-co-wedges} above means that, for every $f \in \Mor(\clC)$,
	\[ D^{\tpl{f}[1][p]}_{\tpl{B}[1][q]} \circ \theta_B = \theta_A \circ D^{\tpl{A}[1][p]}_{\tpl{f}[1][q]} \]
	where $f_i \equiv f$, $A_i \equiv \text{src } f, B_i\equiv \text{trg } f$ are the domain and codomain of $f$, for every index in the relevant range, and 
	$\theta_A : X \to D_{A,\dots,A}^{A,\dots,A}$ is a morphism in $\clD$.
\end{remark}
\begin{notation}%
	We write $\pqWedges{p}{q}{X}{D}$ for the set of $(p,q)$\hyp{}wedges of $X$ under $D$, and similarly, $\pqCoWedges{p}{q}{X}{D}$ for $(p,q)$\hyp{}cowedges.
\end{notation}
\begin{proposition}%
	Let $D:\pq{\clC}\to\clD$ be a functor.%
	\begin{enumtag}{wdf}
		\item\label{wdf-1}The assignment $X\mapsto\pqWedges{p}{q}{X}{D}$ defines a presheaf
		\[\pqWedges{p}{q}{(-)}{D}:\clC^{\op}\to \Sets.\]
		\item\label{wdf-2}The assignment $X\mapsto\pqCoWedges{p}{q}{X}{D}$ defines a functor
		\[\pqCoWedges{p}{q}{(-)}{D}:\clC\to \Sets.\]
	\end{enumtag}
\end{proposition}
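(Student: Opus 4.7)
The plan is to construct the action on morphisms using the vertical composition of dinaturals with naturals established in \cref{theta-circ-alpha-is-indeed-a-p-q-dinatural-transformation}, and then verify functoriality from that composition's associativity and unitality, which are inherited from composition in $\clD$.

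For part \ref{wdf-1}, given a morphism $f\colon X'\to X$ in $\clD$, I observe that it induces a natural transformation $\Delta_{f}\colon\Delta_{X'}\Longrightarrow\Delta_{X}$ between the corresponding constant functors of type $\typepq{q}{p}$, whose components are all equal to $f$. Given a $(p,q)$\hyp{}wedge $\theta\colon\Delta_{X}\din D$, I define $f^{*}(\theta)\defeq\theta\circ\Delta_{f}\colon\Delta_{X'}\din D$ using the vertical composition of \ref{dc_1}. By \cref{theta-circ-alpha-is-indeed-a-p-q-dinatural-transformation}, this composite is again a $(p,q)$\hyp{}dinatural transformation, hence a $(p,q)$\hyp{}wedge under $X'$. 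Unwinding definitions, its component at $A\in\clC_{o}$ is simply $\theta_{A}\circ f\colon X'\to D^{\bsA}_{\bsA}$. Functoriality — namely, $\id_{X}^{*}=\id$ and $(g\circ f)^{*}=f^{*}\circ g^{*}$ — then follows immediately from the associativity and unitality of composition in $\clD$ applied component-wise.

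Part \ref{wdf-2} is entirely dual: given $f\colon X\to X'$ in $\clD$, the induced natural transformation $\Delta_{f}\colon\Delta_{X}\Longrightarrow\Delta_{X'}$ combines with a $(p,q)$\hyp{}cowedge $\zeta\colon D\din\Delta_{X}$ via the composition of \ref{dc_2}, producing $f_{*}(\zeta)\defeq\Delta_{f}\circ\zeta\colon D\din\Delta_{X'}$, whose component at $A$ is $f\circ\zeta_{A}$. The same reasoning yields covariant functoriality in $X$.

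No step here is particularly delicate; the only thing one must check carefully is that the commutativity diagrams of \cref{unwinding-p-q-co-wedges} are preserved under pre/post-composition with $f$, which is automatic once one invokes \cref{theta-circ-alpha}. I would therefore present the argument concisely by reducing both statements to a single invocation of that proposition and remarking that functoriality is inherited from $\clD$. (I note in passing that the codomain of the presheaf in \ref{wdf-1} and the domain of the functor in \ref{wdf-2} should naturally read $\clD^{\op}$ and $\clD$ respectively, since $X$ ranges over objects of $\clD$.)
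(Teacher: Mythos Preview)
Your proof is correct and follows essentially the same approach as the paper's own: define the action on morphisms by vertical composition with the constant natural transformation $\Delta_{f}$, invoke \cref{theta-circ-alpha-is-indeed-a-p-q-dinatural-transformation} to see the result is again a $(p,q)$-co/wedge, and observe that functoriality is inherited from composition in $\clD$. Your parenthetical observation that the domain should read $\clD^{\op}$ (resp.\ $\clD$) rather than $\clC^{\op}$ (resp.\ $\clC$) is also correct---this is a typo in the statement.
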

\begin{proof}%
	\cref{wdf-1}: Let $f: X\to Y$ be a morphism of $\clC$. We have a map
	\begin{center}
		\newsavebox{\BoxTwo}
		\savebox{\BoxTwo}{
			$\mspace{-10.0mu}$
			\begin{tikzcd}[row sep=1.8em, column sep=1.8em, ampersand replacement=\&,cramped]
				X
				\arrow[r, "f"]
				\&
				Y
			\end{tikzcd}
			$\mspace{-10.0mu}$
		}
		\begin{tikzcd}[row sep=0.0em, column sep=2.7em, ampersand replacement=\&]
			\mathllap{\pqWedges{p}{q}{f}{D}}\colon \pqWedges{p}{q}{Y}{D}
			\arrow[r]
			\&
			\pqWedges{p}{q}{X}{D}
			\\
			\left(Y\din D\right)
			\arrow[r, mapsto]
			\&
			\left(\usebox{\BoxTwo}\din D\right)\mathrlap{,}
		\end{tikzcd}
	\end{center}%
	where we have used \cref{theta-circ-alpha-is-indeed-a-p-q-dinatural-transformation}. As it is clear that this construction preserves composition and identities, we get our desired presheaf.

	\cref{wdf-2}: This is dual to \cref{wdf-1}.
\end{proof}
\begin{proposition}\label{functoriality-of-wedges-ii}
	Let $D:\pq{\clC}\to\clD$ be a functor.%
	\footnote{%
		More generally, the assignments $F,G\mapsto\pqDiNat{p}{q}(F,G)$ define functors
		\begin{align*}
			 & \pqDiNat{p}{q}(-,-) \colon \Cat(\pq{\clC},\clD)^{\op}\times\Cat(\pq{\clC},\clD) \to \Sets, \\
			 & \pqDiNat{p}{q}(F,-) \colon \Cat(\pq{\clC},\clD) \to \Sets,                                 \\
			 & \pqDiNat{p}{q}(-,G) \colon \Cat(\pq{\clC},\clD)^{\op} \to \Sets.
		\end{align*}
	}%
	\begin{enumtag}{wdf'}
		\item The assignment $D\mapsto\pqWedges{p}{q}{X}{D}$ defines a functor
		\[\pqWedgesFunctor{p}{q}{X}:\Cat(\pq{\clC},\clD)\to \clD.\]
		\item The assignment $D\mapsto\pqCoWedges{p}{q}{X}{D}$ defines a functor
		\[\pqCoWedgesFunctor{p}{q}{X}:\Cat(\pq{\clC},\clD)\to \clD.\]
	\end{enumtag}
\end{proposition}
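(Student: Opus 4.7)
The proof is essentially a direct application of \cref{theta-circ-alpha-is-indeed-a-p-q-dinatural-transformation}, which guarantees that postcomposing a $(p,q)$\hyp{}dinatural transformation with an ordinary natural transformation again yields a $(p,q)$\hyp{}dinatural transformation. I first note, however, that the codomain of the functor in the statement should be $\Sets$ rather than $\clD$, since $\pqWedges{p}{q}{X}{D}$ is a \emph{set} of morphisms (this matches the footnote).

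My plan is as follows. Given a natural transformation $\alpha : D \To D'$ between functors $D, D'\colon\pq{\clC}\to\clD$, I define
\[ \pqWedgesFunctor{p}{q}{X}(\alpha) \colon \pqWedges{p}{q}{X}{D} \longrightarrow \pqWedges{p}{q}{X}{D'} \]
as the assignment sending a $(p,q)$\hyp{}wedge $\theta : \Delta_X \din D$ to the composite $\alpha \circ \theta : \Delta_X \din D'$, where the latter is formed componentwise as
\[ (\alpha\circ\theta)_A \defeq \alpha^{\bsA}_{\bsA} \circ \theta_A \colon X \to (D')^{\bsA}_{\bsA}. \]
By \cref{theta-circ-alpha-is-indeed-a-p-q-dinatural-transformation} (case \cref{dc_2}, applied with the constant functor $\Delta_X$ in place of $F$), $\alpha\circ\theta$ is indeed a $(p,q)$\hyp{}dinatural transformation, hence a $(p,q)$\hyp{}wedge for $D'$ under $X$; so $\pqWedgesFunctor{p}{q}{X}(\alpha)$ is a well-defined map of sets.

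Functoriality is then routine and follows componentwise from the associativity and unitality of composition in $\clD$: for the identity $\id_D$ we have $(\id_D)^{\bsA}_{\bsA} \circ \theta_A = \theta_A$, so $\pqWedgesFunctor{p}{q}{X}(\id_D) = \id$; and for a composable pair $D \xto{\alpha} D' \xto{\beta} D''$, both $\pqWedgesFunctor{p}{q}{X}(\beta\circ\alpha)(\theta)$ and $\pqWedgesFunctor{p}{q}{X}(\beta) \circ \pqWedgesFunctor{p}{q}{X}(\alpha)(\theta)$ have $A$-component equal to $\beta^{\bsA}_{\bsA}\circ\alpha^{\bsA}_{\bsA}\circ\theta_A$. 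The $(p,q)$\hyp{}cowedge case is entirely dual: one precomposes a $(p,q)$\hyp{}cowedge $\zeta : D \din \Delta_X$ with a natural transformation $\alpha : D' \To D$ to obtain $\zeta\circ\alpha : D' \din \Delta_X$, invoking case \cref{dc_1} of \cref{theta-circ-alpha-is-indeed-a-p-q-dinatural-transformation}.

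There is no real obstacle here: the only non-trivial content, namely that the composition of a natural with a $(p,q)$\hyp{}dinatural is still $(p,q)$\hyp{}dinatural, has already been established. The footnote's more general assertion that $\pqDiNat{p}{q}(-,-)$ defines a functor of two variables follows from the same observation, combining cases \cref{dc_1} and \cref{dc_2} simultaneously.
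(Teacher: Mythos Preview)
Your proof is correct and follows the same approach as the paper: define the action on a natural transformation $\alpha$ by postcomposition $\theta\mapsto\alpha\circ\theta$, invoke \cref{theta-circ-alpha-is-indeed-a-p-q-dinatural-transformation} to see that this lands in $(p,q)$\hyp{}wedges for $D'$, and observe that functoriality is immediate. Your remark that the codomain should be $\Sets$ rather than $\clD$ is also well taken.
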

\begin{proof}%
	Let $\alpha: D \to  D'$ be a natural transformation. We have a map
	\begin{center}
		\newsavebox{\BoxThree}
		\savebox{\BoxThree}{
			$\mspace{-10.0mu}$
			\begin{tikzcd}[row sep=1.8em, column sep=1.8em, ampersand replacement=\&,cramped]
				D
				\arrow[r, "\alpha",Rightarrow]
				\&
				D'
			\end{tikzcd}
			$\mspace{-10.0mu}$
		}
		\begin{tikzcd}[row sep=0.0em, column sep=2.7em, ampersand replacement=\&]
			\mathllap{\pqWedges{p}{q}{X}{\alpha} : }\pqWedges{p}{q}{X}{D}
			\arrow[r]
			\&
			\pqWedges{p}{q}{X}{D'}
			\\
			\left(X\din D\right)
			\arrow[r, mapsto]
			\&
			\left(X\din\usebox{\BoxThree}\right)\mspace{-3.5mu}\mathrlap{,}
		\end{tikzcd}
	\end{center}%
	where we have used \cref{theta-circ-alpha-is-indeed-a-p-q-dinatural-transformation}. As it is clear that this construction preserves composition and identities, we get our desired functor.
\end{proof}
\begin{definition}
	Let $\theta : X\din D$ be a $(p,q)$\hyp{}wedge, and $\zeta : D \din Y$ be a $(p,q)$\hyp{}cowedge;
	\begin{enumtag}{pc}
		\item \label{pc_1} The \emph{$(p,q)$\hyp{}wedge post-composition natural transformation} associated to a $(p,q)$\hyp{}wedge $\theta: X\din D$ is the natural transformation
		\[\theta_{*}: h_{X}\Longrightarrow \pqWedges{p}{q}{(-)}{D}\]
		consisting of the collection
		\[
			\big\{\theta_{*,A}: h_{X}(A)\to \pqWedges{p}{q}{A}{D} : A\in \clC_o\big\},
		\]
		where $\theta_{*,A}$ is the map
		\begin{center}
			\begin{tikzcd}[row sep=0.0em, column sep=2.7em, ampersand replacement=\&]
                \mathllap{\theta_{*,A}\colon}h_{X}(A)
				\arrow[r]
				\&
				\pqWedges{p}{q}{A}{D}
				\\
				\var[f]{A}{X}
				\arrow[r, mapsto]
				\&
                \left(\Delta_{A}\xrightarrow{\Delta_{f}}\Delta_{X}\din D\right)\mspace{-4.5mu}\mathrlap{.}
			\end{tikzcd}
		\end{center}
		\item \label{pc_2} The \emph{$(p,q)$\hyp{}cowedge pre-composition natural transformation} associated to a $(p,q)$\hyp{}cowedge $\zeta : D\din Y$ is the natural transformation
		\[
			\zeta^{*}: h^{Y} \to \pqCoWedges{p}{q}{(-)}{D}
		\]
		consisting of the collection
		\[
			\big\{\zeta^{*}_{A}: h^{Y}(A)\to \pqCoWedges{p}{q}{A}{D} : A\in \clC_o \big\},
		\]
		where $\zeta^{*}_{A}$ is the map
		\begin{center}
			\begin{tikzcd}[row sep=0.0em, column sep=2.7em, ampersand replacement=\&]
                \mathllap{\zeta^{*}_{A}\colon}h^{Y}(A)
				\arrow[r]
				\&
				\pqCoWedges{p}{q}{A}{D}
				\\
				\var[f]{Y}{A}
				\arrow[r, mapsto]
				\&
                \left( D \din \Delta_Y \xrightarrow{\Delta_{f}} \Delta_A \right)\mspace{-4.5mu}\mathrlap{.}
			\end{tikzcd}
		\end{center}
	\end{enumtag}
\end{definition}
The notion of dinaturality introduced in \cite{2007.07576} is in fact more general, as \cite[Definition 2.4]{2007.07576} introduces what would be called here a \emph{$(p,q)$\hyp{}to-$(r,s)$-dinatural transformation}. Recall that in the setting of \cite{2007.07576}, the tuple of powers of $\clC$ is unbiased. Their definition is as follows:
\begin{definition}\label{def:transformation}
	Let $\alpha,\beta$ be two multi-indices, and let $F : \clC^\alpha \to \clD$, $G : \clC^\beta \to \clD$ be functors. A \emph{transformation} $\phi : F \to G$ \emph{of type}
	$
		\begin{tikzcd}[cramped,sep=small]
			\ell\alpha \ar[r,"\sigma"] & n & \ell\beta \ar[l,"\tau"']
		\end{tikzcd}
	$
	(with $n = \ell \bsA$ a positive integer) is a family of morphisms in $\clD$
	\[
		\bigl( \phi_{\bsA} : F(\bsA\sigma) \to G(\bsA\tau) \bigr)_{\bsA \in \clC^n}.
	\]
\end{definition}
This translates into a family $\phi_{A_1,\dots,A_n} : F(A_{\sigma 1}, \dots, A_{\sigma\ell\alpha}) \to G(A_{\tau1},\dots,A_{\tau\ell\beta})$).

Notice that $\alpha$ and $\beta$ are \emph{different} multi-indices in this definition, and $\sigma$, $\tau$ need not be injective or surjective, so we may have repeated or unused variables.
\begin{notation}%
    Before proceeding, we need two pieces of notation:
    \begin{enumerate}
        \item\label{n_4}  Substitution of an object at a prescribed index
        \[\uA[X/i] \defeq (A_1,\dots A_{i-1},X,A_{i+1},\dots, A_n).\]
        \item\label{n_5} Substitution of a tuple at a prescribed tuple of indices
        \[\uA[X_1,\dots,X_r/i_1,\dots,i_r] \defeq ((\uA[X_1/i_1])[X_2/i_2]\cdots)[X_r/i_r].\]
    \end{enumerate}
\end{notation}
\begin{definition}\label{def:dinaturality-i-th-variable}
	Let $\phi = (\phi_{A_1,\dots,A_n}) : F \to G$ be a transformation. For $i \in \{1,\dots,n\}$, we say that $\phi$ is \emph{dinatural in $A_i$} (or, more precisely, \emph{dinatural in its $i$-th variable}) if and only if for all $A_1,\dots,A_{i-1}, A_{i+1},\dots,A_n$ objects of $\clC$ and for all $f : A \to B$ in $\clC$ the following hexagon commutes:

	{\footnotesize
	\begin{center}
		\begin{codi}
			\obj [hexagonal=horizontal side 2cm angle 60]
			{
				|(1)| F(\subst \bsA A i \sigma) && |(4)| G(\subst \bsA A i \tau) & \\
				|(2)| F(\substMV \bsA B A i \sigma) &&&  |(5)| G(\substMV \bsA A B i \tau) \\
				|(3)| F(\subst \bsA B i \sigma) && |(6)| G(\subst \bsA B i \tau) & \\
			};
			\mor (2) ["F(\substMV \bsA f A i \sigma)",->] (1) ["\phi_{\subst \bsA A i}",->]  (4) ["G(\substMV \bsA A f i \tau)",->] (5);
			\mor (2) ["F(\substMV \bsA B f i \sigma)"',->] (3) ["\phi_{\subst \bsA B i}"',->] (6) ["G(\substMV \bsA f B i \tau)"',->] (5);
		\end{codi}
	\end{center}
	}
	where $\bsA$ is the $n$-tuple $(A_1,\dots,A_n)$ of the objects above with an additional (unused in this definition) object $A_i$ of $\clC$.
\end{definition}
As far as higher arity co/wedges (i.e.\ higher arity dinatural transformations from/to a constant functor) are concerned, however, the notions of $(p,q)$\hyp{}dinaturality and $(p,q)$\hyp{}to-$(r,s)$-dinaturality agree and yield the same theory of higher arity co/ends.

\section{Higher Arity Ends}\label{sec:higher-arity-ends}
\subsection{Basic definitions}
\begin{definition}\label{def:p-q-ends}
	Let $D:\pq{\clC}\to \clD$ be a functor.
	\begin{enumtag}{pq}
		\item\label{pqe}The \emph{$(p,q)$\hyp{}end} of $D$ is, if it exists, the pair $\smash{\left(\pqEnd{p}{q}{A\in\clC}D^{\bsA}_{\!\bsA},\omega\right)}$ formed by an object
		\[\pqEnd{p}{q}{A\in\clC}D^{\bsA}_{\!\bsA}\]
		of $\clD$, and a $(p,q)$\hyp{}wedge
		\[\omega:{\pqEnd{p}{q}{A\in\clC}D^{\bsA}_{\!\bsA}}\din D\]
		for ${\pqEnd{p}{q}{A\in\clC}D^{\bsA}_{\!\bsA}}$ over $D$, such that the $(p,q)$\hyp{}wedge post-composition natural transformation
		\[\omega_{*} : \ph\left(-,\pqEnd{p}{q}{A\in\clC}D^{\bsA}_{\!\bsA}\right)\Longrightarrow\pqWedges{p}{q}{(-)}{D}\]
		is a natural isomorphism.
		\par\vspace*{0.5\baselineskip}
		\item\label{pqc}The \emph{$(p,q)$\hyp{}coend} of $D$ is, if it exists, the pair $\smash{\left(\pqCoend{p}{q}{A\in\clC}D^{\bsA}_{\!\bsA},\xi\right)}$ formed by an object
		\[ \pqCoend{p}{q}{A\in\clC}D^{\bsA}_{\!\bsA} \]
		of $\clD$, and a $(p,q)$\hyp{}cowedge
		\[ \xi:D\din {\pqCoend{p}{q}{A\in\clC}D^{\bsA}_{\!\bsA}} \]
		for ${\pqCoend{p}{q}{A}D^{\bsA}_{\!\bsA}}$ under $D$, such that the $(p,q)$\hyp{}cowedge post-composition natural transformation
		\[\xi^{*} : \ph\left(\pqCoend{p}{q}{A\in\clC}D^{\bsA}_{\!\bsA},-\right)\Longrightarrow\pqCoWedges{p}{q}{(-)}{D}\]
		is a natural isomorphism.
	\end{enumtag}
\end{definition}
We follow the customary abuse of notation of denoting the $(p,q)$\hyp{}end of $D$ as just the tip $\pqEnd{p}{q}{A\in\clC}D^{\bsA}_{\!\bsA}$ of the terminal $(p,q)$\hyp{}wedge $\omega$. The object $\pqEnd{p}{q}{A\in\clC}D^{\bsA}_{\!\bsA}$ can also be shortly denoted as $\pqEnd{p}{q}{A}D$, or $\pqEnd{p}{q}{}D$.
\begin{remark}
	The co/representability conditions of \cref{def:p-q-ends} unwind as the following universal properties:
	\begin{enumtag}{upq}
		\item The $(p,q)$\hyp{}end of $D$ consists of a pair $\smash{\left(\pqEnd{p}{q}{A\in\clC}D^{\bsA}_{\!\bsA},\omega\right)}$ with
		\begin{enumerate}[label=\arabic*)]
			\item $\pqEnd{p}{q}{A\in\clC}D^{\bsA}_{\!\bsA}$ an object of $\clD$, and
			\item $\omega$ a natural isomorphism with components
			      \[\omega_E : \clD\left(E, \pqEnd{p}{q}{A\in\clC}D^{\bsA}_{\!\bsA}\right) \cong \pqWedges{p}{q}{E}{D}.\]
		\end{enumerate}
		The family of such morphisms of $\clD$ is such that evaluating the isomorphism $\omega_E$ at the identity of $E=\pqEnd{p}{q}{A\in\clC}D^{\bsA}_{\!\bsA}$ gives a $(p,q)$\hyp{}wedge
		\[\Big\{\omega_{A} :\pqEnd{p}{q}{A\in\clC}D^{\bsA}_{\!\bsA}\to  D^{\bsA}_{\!\bsA}\ :\ A\in \clC_o\Big\}\]
		indexed by the objects of $\clC$. This $(p,q)$\hyp{}wedge has the following universal property:

		\begin{itemize}
			\item[$(\star)$]Given another such pair $(E,\theta)$, there exists a unique morphism $E\xlongdashrightarrow{\exists!}\pqEnd{p}{q}{A}D^{\bsA}_{\!\bsA}$ filling the diagram
			      \begin{center}
				      \begin{codi}
					      \obj [hexagonal=horizontal side 2.25cm angle 60] {
					      E & {} & \\
					      {} & |(end)| \pqEnd{p}{q}{A}D^{\bsA}_{\!\bsA} & |(DBB)| D_{\bsB}^{\bsB} \\
					      |(DAA)| D_{\bsA}^{\bsA} & |(DAB)| D^{\bsA}_{\bsB} & \\
					      };
					      \mor E "\theta_B":[bend left,->] DBB "D^{\bsf}_{\bsB}":-> DAB;
					      \mor[swap] * "\theta_A":[bend right, ->] DAA "D_{\bsf}^{\bsA}":-> DAB;
					      \mor * "\exists!":[dashed,->] end "\omega_B":-> DBB;
					      \mor[swap] end "\omega_A":-> DAA;
				      \end{codi}
			      \end{center}
		\end{itemize}
		\item The $(p,q)$\hyp{}coend of $D$ consists of a pair $\smash{\left(\pqCoend{p}{q}{A\in\clC}D^{\bsA}_{\!\bsA},\xi\right)}$ with
		\begin{enumerate}[label=\arabic*)]
			\item $\pqCoend{p}{q}{A\in\clC}D^{\bsA}_{\!\bsA}$ an object of $\clD$, and
			\item $\xi$ a natural isomorphism with components
			      \[\xi_E : \clD\left(\pqCoend{p}{q}{A\in\clC}D^{\bsA}_{\!\bsA},E\right) \cong \pqCoWedges{p}{q}{E}{D}.\]
		\end{enumerate}
		The family of such morphisms of $\clD$ is such that evaluating the isomorphism $\xi_D$ at the identity of $C=\pqCoend{p}{q}{A\in\clC}D^{\bsA}_{\!\bsA}$ gives a $(p,q)$\hyp{}cowedge
		\[\Big\{\xi_{A} :D^{\bsA}_{\!\bsA}\to  \pqCoend{p}{q}{A\in\clC}D^{\bsA}_{\!\bsA}\ :\ A\in \clC_o\Big\}\]
		indexed by the objects of $\clC$. This $(p,q)$\hyp{}cowedge has the following universal property:
		\begin{itemize}
			\item[$(\star)$]Given another such pair $(C,\zeta)$, there exists a unique morphism $\pqCoend{p}{q}{A}D^{\bsA}_{\!\bsA}\xlongdashrightarrow{\exists!}C$ filling the diagram
			      \begin{center}
				      \begin{codi}
					      \obj [hexagonal=horizontal side 2.25cm angle 60] {
					      C & {} & \\
					      {} & |(end)| \pqCoend{p}{q}{A}D^{\bsA}_{\!\bsA} & |(DBB)| D^{\bsB}_{\!\bsB} \\
					      |(DAA)| D^{\bsA}_{\!\bsA} & |(DAB)| D^{\bsB}_{\!\bsA} & \\
					      };
					      \mor E "\zeta_B":[bend left,<-] DBB "D_{\bsf}^{\bsB}":<- DAB;
					      \mor[swap] * "\zeta_A":[bend right, <-] DAA "D^{\bsf}_{\bsA}":<- DAB;
					      \mor * "\exists!":[dashed,<-] end "\xi_B":<- DBB;
					      \mor[swap] end "\xi_A":<- DAA;
				      \end{codi}
			      \end{center}
		\end{itemize}
	\end{enumtag}
\end{remark}
\begin{remark}\label{itsa_terminal}
	This means that the $(p,q)$\hyp{}end of $D$ is the terminal object of the category of wedges of $D$, whose morphisms $h : (\alpha :\Delta_X \din D) \to (\beta :\Delta_Y \din D)$ are defined as the morphisms $h : X \to Y$ of $\clD$ such that for every $A\in \clC_o$ one has $\beta_A \circ h = \alpha_A$:
	\[
		\begin{tikzcd}[row sep={3.6em,between origins}, column sep={3.6em,between origins}]
			X \ar[rr,"h"]\ar[dr, "\alpha_A"']&& Y \ar[dl, "\beta_A"]\\
			& D^{\bsA}_{\!\bsA}\mathrlap{.}
		\end{tikzcd}
	\]
\end{remark}
\cref{itsa_terminal} can be dualised to define $(p,q)$\hyp{}\emph{co}ends as initial $(p,q)$\hyp{}cowedges. This is straightforward, and we leave it to the reader to spell out.

In the following proposition, we will make use of the $(p,q)$\hyp{}diagonal functor $\pqdiag$ introduced in \cref{def:p-q-diagonal-functor}, and duplicated in the following
\begin{notation}\label{not:dummy}
	We say that
	\begin{itemize}
		\item A functor $F : \pq{\clC}[p+r][q+s] \to \clD$ is $(r,s)$-\emph{mute} if it factors through the canonical projection $\pi_{r,s} : \pq{\clC}[p+r][q+s] \to \pq{\clC}$ that cancels the last $r$ contravariant components, and the last $s$ covariant components.
        \item Given a functor $F : \pq{\clC} \to \clD$, we write $\dummy F : \pq{\clC}[p+r][q+s]\longrightarrow\CatFont{D}$ for the composition $\pq{\clC}[p+r][q+s]\xrightarrow{\pi_{r,s}} \pq{\clC} \xrightarrow{F} \clD$; this promotes every functor of type $\left[\pqMat{p\\q}\right]$ to an $(r,s)$-mute one.
	\end{itemize}
	It's immediate that every functor that is mute in \emph{some} of its variables can be made into an $(r,s)$-mute one by suitably reshuffling its arguments.
\end{notation}
\begin{proposition}[Properties of $(p,q)$\hyp{}ends and $(p,q)$\hyp{}coends]\label{prop:properties-of-p-q-ends}
	Let $D :\pq{\clC}\to \clD$ be a functor.
	\begin{enumtag}{pe}
		\item\label{functoriality-of-p-q-ends}\SloganFont{Functoriality}Let $D :\pq{\clC}\to \clD$ be a functor. The assignments $D\mapsto\pqEnd{p}{q}{A}D^{\bsA}_{\!\bsA},\pqCoend{p}{q}{A}D^{\bsA}_{\!\bsA}$ define functors
		\begin{align*}
			\pqEnd{p}{q}{A\in\clC}   & : \Cat\big(\pq{\clC},\clD\big) \to \clD, \\
			\pqCoend{p}{q}{A\in\clC} & : \Cat\big(\pq{\clC},\clD\big) \to \clD
		\end{align*}
		with domain the category of functors from $\clC$ of type $\left[\pqMat{p\\q}\right]$ to $\clD$ and natural transformations between them.
		\item\label{p-q-wedges-and-p-q-diagonals}\SloganFont{$(p,q)$\hyp{}Wedges and $(p,q)$\hyp{}diagonals}For each $X\in\clC_o$ we have natural isomorphisms
		\begin{align*}
			\pqWedges{p}{q}{(-)}{D}   & \cong   \Wedges{(-)}{\Delta_{p,q}^{*}(D)}, \\
			\pqCoWedges{p}{q}{(-)}{D} & \cong \Cowedges{(-)}{\Delta_{p,q}^{*}(D)}.
		\end{align*}
		where $\pqdiag$ is the functor introduced in \cref{def:p-q-diagonal-functor}.
		\item\label{p-q-ends-as-ordinary-ends}\SloganFont{$(p,q)$\hyp{}Ends as ordinary ends}We have natural isomorphisms
		\begin{align*}
			\pqEnd{p}{q}{A\in\clC}D^{\bsA}_{\!\bsA}   & \cong \int_{A\in\clC}\Delta_{p,q}^{*}(D)^{A}_{A}, \\
			\pqCoend{p}{q}{A\in\clC}D^{\bsA}_{\!\bsA} & \cong \int^{A\in\clC}\Delta_{p,q}^{*}(D)^{A}_{A}.
		\end{align*}
		where $\pqdiag$ is the functor introduced in \cref{def:p-q-diagonal-functor}. In other words, the $(p,q)$\hyp{}end functor factors as a composition
		\begin{diagram*}
			\begin{tikzcd}[row sep={11.7em,between origins}, column sep={11.7em,between origins},  ampersand replacement=\&]
				\Fun(\pq{\clC},\clD)
				\arrow[r, "\Delta^{(p,q)}_{*}"]
				\&
				\Fun(\clC^{\op}\times\clC,\clD)
				\arrow[r, "\int_{A}"]
				\&
				\clD\mathrlap{,}
			\end{tikzcd}
		\end{diagram*}%
		and similarly so do $(p,q)$\hyp{}coends.
		\item\label{p-q-ends-as-limits}\SloganFont{$(p,q)$\hyp{}Ends as limits} The $(p,q)$\hyp{}end and $(p,q)$\hyp{}coend of $D$ fit respectively into an equaliser and into a coequaliser diagram
		\begin{gather*}
			\begin{tikzcd}[row sep=0cm,ampersand replacement=\&]
				\pqEnd{p}{q}{A\in\clC}D^{\bsA}_{\!\bsA}
				\arrow[r,outer sep=-0.1em]
				\&
				\displaystyle\prod_{\mathclap{A\in\clC_o}} D^{\bsA}_{\!\bsA}
				\arrow[r,shift left =1.0, "\lambda"]
				\arrow[r,shift right=1.0, "\rho"']
				\&
				\displaystyle\prod_{\mathclap{u\colon A\to B}} D^{\bsA}_{\bsB}\\
				\& \displaystyle\coprod_{\mathclap{u\colon A\to B}}D^{\bsB}_{\bsA}
				\arrow[r,shift left =1.0,"\lambda'"]
				\arrow[r,shift right=1.0,"\rho'"']
				\&
				\displaystyle\coprod_{\mathclap{A\in\clC_o}}D^{\bsA}_{\!\bsA}
				\arrow[r,outer sep=-0.1em]
				\&
				\pqCoend{p}{q}{A\in\clC}D^{\bsA}_{\!\bsA}
			\end{tikzcd}
		\end{gather*}
		for suitable maps $\lambda,\rho,\lambda',\rho'$, induced by the morphisms $D^{\bsA}_{\bsu}, D^{\bsu}_{\bsB}$.
		\item\label{p-q-ends-as-limits-again}\SloganFont{$(p,q)$\hyp{}Ends as limits, again}We have natural isomorphisms
		\begin{align*}
			\pqEnd{p}{q}{A\in\clC}D^{\bsA}_{\!\bsA}   & \cong \lim\Big(\Tw{\clC}\xloongtwoheadsrightarrow{\Forgetful_{p,q}}\pq{\clC}\xloongrightarrow{D}\clD\Big),   \\
			\pqCoend{p}{q}{A\in\clC}D^{\bsA}_{\!\bsA} & \cong \colim\Big(\Tw{\clC}\xloongtwoheadsrightarrow{\Forgetful_{p,q}}\pq{\clC}\xloongrightarrow{D}\clD\Big),
		\end{align*}
		where $\Forgetful_{p,q}\colon\Tw{\clC}\to \pq{\clC}$ is the composition $\Delta^{(p,q)}\circ\Forgetful$, with $\Forgetful$ the usual forgetful functor from $\Tw{\clC}$ to $\clC^{\op}\times\clC$. Explicitly, $\Forgetful_{p,q}$ is the functor
		\[
			\begin{tikzcd}[row sep=0cm]
                \mathllap{\Forgetful_{p,q}\colon}
				\Tw{\clC} \ar[r] & \pq{\clC}\mrp{,}\\
                \var[f]{A}{B} \ar[r,mapsto] & (\boldsymbol{A},\boldsymbol{B})\mrp{,}\\
				\left[
					\begin{smallmatrix}
						& A & \xrightarrow{f}   & B & \\
						\phi\kern-.5em & \uparrow && \downarrow & \kern-.5em\psi \\
						& C & \xrightarrow[g]{} & D
					\end{smallmatrix}
					\right]
				\ar[r,mapsto] & (\boldsymbol{\phi},\boldsymbol{\psi})\mrp{.}
			\end{tikzcd}
		\]
		\item\label{p-q-ends-as-limits-yet-again}\SloganFont{$(p,q)$\hyp{}Ends as limits, yet again}There exists a category $\pqTw{p}{q}{\clC}$ together with a universal fibration
		\[\Forgetful\colon\pqTw{p}{q}{\clC}\longtwoheadsrightarrow\pq{\clC}\]
		inducing natural isomorphisms
		\begin{align*}
			\pqEnd{p}{q}{A\in\clC}D^{\bsA}_{\!\bsA}   & \cong \lim\Big(\pqTw{p}{q}{\clC}\xloongtwoheadsrightarrow{\Forgetful}\pq{\clC}\xloongrightarrow{D}\clD\Big),   \\
			\pqCoend{p}{q}{A\in\clC}D^{\bsA}_{\!\bsA} & \cong \colim\Big(\pqTw{p}{q}{\clC}\xloongtwoheadsrightarrow{\Forgetful}\pq{\clC}\xloongrightarrow{D}\clD\Big).
		\end{align*}
		\item\label{p-q-ends-as-p-plus-r-q-plus-s-ends}\SloganFont{$(p,q)$\hyp{}Ends as $(p+r,q+s)$-ends}We have
		\begin{align*}
			\pqEnd{p}{q}{A\in\clC}D^{\bsA}_{\!\bsA}   & \cong \pqEnd{p+r}{q+s}{A\in\clC}\dummy(D)^{\bsA}_{\bsA},   \\
			\pqCoend{p}{q}{A\in\clC}D^{\bsA}_{\!\bsA} & \cong \pqCoend{p+r}{q+s}{A\in\clC}\dummy(D)^{\bsA}_{\bsA},
		\end{align*}
		where $\dummyF$ is the functor introduced in \cref{not:dummy}.
		\item\label{p-q-ends-commute-with-homs}\SloganFont{Commutativity of $(p,q)$\hyp{}ends with homs}We have natural isomorphisms
		\begin{align*}
			\clD\left(-,\pqEnd{p}{q}{A\in\clC}D^{\bsA}_{\!\bsA}\right)   & \cong \pqEnd{p}{q}{A\in\clC}\clD\Big(-,D^{\bsA}_{\!\bsA}\Big)  \\
			\clD\left(\pqCoend{p}{q}{A\in\clC}D^{\bsA}_{\!\bsA},-\right) & \cong \pqEnd{q}{p}{A\in\clC}\clD\Big(D^{\bsA}_{\!\bsA},-\Big).
		\end{align*}
	\end{enumtag}
\end{proposition}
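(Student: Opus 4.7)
The overall strategy is to prove part \cref{p-q-wedges-and-p-q-diagonals} first, since it provides the bridge to classical coend calculus: once $(p,q)$\hyp{}wedges for $D$ are identified with ordinary wedges for $\pqdiag^*(D)$, every statement about universal $(p,q)$\hyp{}wedges translates into one about classical universal wedges, and all the remaining parts then follow by invoking, or mildly generalising, well-known theorems about ordinary ends. Concretely, for \cref{p-q-wedges-and-p-q-diagonals} one applies \cref{higher-arity-dinaturality-via-ordinary-dinaturality} to a pair $(\Delta_X, D)$ (or $(D,\Delta_X)$) and observes that $\pqdiag^*(\Delta_X)$ is again the constant functor at $X$, since $\Delta_X$ factors through the terminal category; the claimed natural isomorphisms of wedge sets then drop out of \eqref{eq:pq-dinat-via-nat}. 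From here, \cref{p-q-ends-as-ordinary-ends} is immediate: representing objects are unique up to canonical isomorphism, so the $(p,q)$\hyp{}end of $D$ must agree with the ordinary end of $\pqdiag^*(D)$; the factorisation of $\pqEnd{p}{q}{A}$ through $\pqdiag^*$ then yields \cref{functoriality-of-p-q-ends} by functoriality of $\pqdiag^*$ and of the ordinary end.

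Parts \cref{p-q-ends-as-limits,p-q-ends-as-limits-again} are now corollaries of their classical counterparts applied to $\pqdiag^*(D)$: the equaliser/coequaliser presentation of an ordinary end is the standard one (see, e.g., \cite{cofriend}), and the maps $\lambda,\rho,\lambda',\rho'$ are the ones induced, through the isomorphism of \cref{p-q-ends-as-ordinary-ends}, by $D^{\bsA}_{\bsu}$ and $D^{\bsu}_{\bsB}$. Similarly, the limit formula over $\Tw{\clC}$ is the classical \emph{end = limit over the twisted arrow category} rewritten for $\pqdiag^*(D)$; unwinding $\Sigma_{p,q}=\pqdiag\circ\Sigma$ and its explicit action on objects and morphisms gives exactly the formula in \cref{p-q-ends-as-limits-again}.

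For \cref{p-q-ends-as-p-plus-r-q-plus-s-ends}, the key observation is that the diagonal functor absorbs mute variables: $\Delta_{p+r,q+s}^*(\dummy D) = \pqdiag^*(D)$, because $\dummyF$ is by definition a reindexed precomposition with the projection $\pi_{r,s}$ that simply repeats the diagonal object in the added slots. Applying \cref{p-q-ends-as-ordinary-ends} on both sides then gives the stated isomorphism. For \cref{p-q-ends-commute-with-homs}, one reasons via representability: each co/wedge functor $\pqWedgesFunctor{p}{q}{-}$ is defined by a limit of hom-sets (the dinaturality hexagons are equations between morphisms), and since representable functors $\clD(-,X)$ preserve limits, the isomorphisms follow from \cref{p-q-wedges-and-p-q-diagonals}, the Yoneda lemma, and the analogous commutativity for ordinary ends; dually for coends one uses that $\clD(X,-)$ sends colimits in $\clD$ to limits in $\Sets$, which is what turns the coend into a $(q,p)$\hyp{}end of a hom-valued functor.

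The only part that is \emph{not} a routine transcription from the $(1,1)$-case is \cref{p-q-ends-as-limits-yet-again}: here one must produce an honest category $\pqTw{p}{q}{\clC}$ with a universal fibration $\Sigma\colon \pqTw{p}{q}{\clC}\twoheadrightarrow \pq{\clC}$ computing $(p,q)$\hyp{}ends as conical limits, without first collapsing everything down through $\pqdiag$. I expect this to be the technical heart of the proof and, as announced in the introduction and in \cref{higher-arity-twisted-arrow-categories}, its construction is deferred to the treatment of (co)kusarigama in \cref{sec:higher-arity-yoneda}: there one resolves $D$ into an ordinary functor $\ckgpq{p}{q}{D}$ whose category of elements plays the role of the higher arity twisted arrow category, and the universal property of $\ckgf^{(p,q)}$ translates the desired limit formula into the ordinary one. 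All other parts of the proposition can, at this point in the paper, be verified with the tools already introduced.
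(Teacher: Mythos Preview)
Your proposal is correct and follows essentially the same route as the paper: prove \cref{p-q-wedges-and-p-q-diagonals} via \cref{higher-arity-dinaturality-via-ordinary-dinaturality}, deduce \cref{p-q-ends-as-ordinary-ends} by Yoneda, and read off \cref{p-q-ends-as-limits,p-q-ends-as-limits-again,p-q-ends-as-p-plus-r-q-plus-s-ends,p-q-ends-commute-with-homs} from their classical analogues for $\pqdiag^{*}(D)$, deferring \cref{p-q-ends-as-limits-yet-again} to \cref{higher-arity-twisted-arrow-categories}. Two small remarks: the paper proves \cref{functoriality-of-p-q-ends} directly via Yoneda rather than as a corollary of the factorisation (your route is equally valid); and in your sketch of \cref{p-q-ends-as-limits-yet-again}, the higher arity twisted arrow category is the category of elements of $\ckgpq{p}{q}{\pt}$ (the cokusarigama of the \emph{terminal} functor), not of $\ckgpq{p}{q}{D}$---the diagram $D$ enters only after pulling back along the resulting fibration.
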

\begin{proof}%
	We provide proofs of the above statements for $(p,q)$\hyp{}ends only; the case of $(p,q)$\hyp{}coends is dual.

	\cref{functoriality-of-p-q-ends}: Let $\alpha : D\Longrightarrow D'$ be a natural transformation and consider the composition
	\begin{center}
		\begin{tikzcd}[row sep=3.6em, column sep=3.6em, ampersand replacement=\&]
			\ph\Big(-,\pqEnd{p}{q}{A}D^{\bsA}_{\!\bsA}\Big)
			\arrow[r,"(\omega_{D})_{*}"]
			\&
			\pqWedges{p}{q}{X}{D}
			\arrow[r,"\pqWedges{p}{q}{X}{\alpha}"]
			\&[3.0em]
			\pqWedges{p}{q}{X}{D'}
			\arrow[r,"(\omega_{D'})_{*}"]
			\&
			\ph\Big(-,\pqEnd{p}{q}{A}(D')^{\bsA}_{\bsA}\Big)\mathrlap{,}
		\end{tikzcd}
	\end{center}
	where we have used \cref{functoriality-of-wedges-ii}. This gives us a morphism between the representable functors associated to the $(p,q)$\hyp{}ends of $D$ and $D'$. The Yoneda lemma now yields a morphism
	\[\pqEnd{p}{q}{A}D^{\bsA}_{\!\bsA}\to \pqEnd{p}{q}{A}(D')^{\bsA}_{\bsA}.\]
	between the $(p,q)$\hyp{}ends. Since all constructions involved are functorial, it follows that $(p,q)$\hyp{}ends preserve composition and identities, and hence define a functor.

	\cref{p-q-wedges-and-p-q-diagonals}: This is the special case of \cref{higher-arity-dinaturality-via-ordinary-dinaturality} where $F=\Delta_{(-)}$.

	\cref{p-q-ends-as-ordinary-ends}: We have
	\[
		\ph\left(-,\pqEnd{p}{q}{A\in\clC}D^{\bsA}_{\!\bsA}\right)%
		\cong%
		\pqWedges{p}{q}{(-)}{D}%
		\cong%
		\Wedges{(-)}{\Delta^{(p,q)}_{*}(D)}
		\cong%
		\ph\left(-,\int_{A\in\clC}\Delta_{p,q}^{*}(D)^{\bsA}_{\bsA}\right)\mspace{-3.0mu},%
	\]
	from which the result follows from the Yoneda lemma.

	\cref{p-q-ends-as-limits}: This is again a combination of \cref{p-q-ends-as-ordinary-ends} with the ``products-and-equalisers'' formula for ends.

	\cref{p-q-ends-as-limits-again}: This is just a combination of \cref{p-q-ends-as-ordinary-ends} with the usual formula computing ends as limits of diagrams from the twisted arrow category.

	\cref{p-q-ends-as-limits-yet-again}: This problem is studied in \cref{higher-arity-twisted-arrow-categories}, with the statement of \cref{p-q-ends-as-limits-yet-again} being proved in \cref{p-q-ends-from-p-q-tw-c}.

	\cref{p-q-ends-as-p-plus-r-q-plus-s-ends}: We have
	\[
		\ph\left(-,\pqEnd{p}{q}{A\in\clC}D^{\bsA}_{\!\bsA}\right)%
		\cong%
		\pqWedges{p}{q}{(-)}{D}%
		\cong%
		\pqWedges{p+r}{q+s}{(-)}{\dummy(D)}%
		\cong%
		\ph\left(-,\pqEnd{p+r}{q+s}{A\in\clC}\dummy(D)^{\bsA}_{\bsA}\right)\mspace{-5.0mu},%
	\]
	from which the result follows from the Yoneda lemma.

	\cref{p-q-ends-commute-with-homs}: This follows from \cref{p-q-ends-as-ordinary-ends} and the fact that co/ends commute with $\Hom$s.\qedhere
\end{proof}
\subsection{Adjoints and the Fubini rule}\label{adjoints-and-the-fubini-rule}
The scope of this section is to prove that $(p,q)$\hyp{}ends are right adjoints (and, dually, that $(p,q)$\hyp{}coends are left adjoints), and from this to derive a Fubini rule. Although the proofs are elementary, these properties of $(p,q)$\hyp{}ends are more subtle to assess and require a certain amount of new terminology. Thus we separate them from the above list.
Let's start with a simple definition:
\begin{definition}[The $\hom_\Pi$ functor]\label{prod_hom}
	Let $p,q\geq1$ be natural numbers; let's define a functor
	\[\hom_{\Pi,p,q} : \pq{\clC} \to \Set\]
	by sending a pair of tuples $(\uA,\uB)$ to the product $\prod_{i,j=1}^{p,q}\hom_{\clC}(A_i,B_j)$, namely to the iterated product $\prod_{i=1}^p\prod_{j=1}^q(A_i,B_j)$.
\end{definition}
\begin{remark}
	If $\CatFont{C}$ has finite products and finite coproducts, then we have a canonical factorisation
	\[
		\begin{tikzcd}[row sep={7.2em,between origins}, column sep={7.2em,between origins},  ampersand replacement=\&]
			\pq{\clC}
			\arrow[r, "\M_{p,q}"]
			\arrow[rr, to path={%
						|- ([yshift=-2.0ex]\tikztotarget.south)node[near end,below]{$\scriptstyle\Hom_{\Pi,p,q}$}
						-- (\tikztotarget)}]
			\&
			\clC^{\op}\times\clC
			\arrow[r, "\Hom"]
			\&
			\Sets.
		\end{tikzcd}
	\]
	where $\M_{p,q}$ is the functor of \cref{biprod-p-q}.
\end{remark}
\begin{proposition}\label{dis}
	Let $G : \pq{\clC} \to \Set$ be a functor. There is an isomorphism, natural in $G$,
	\[ \pqDiNat{p}{q}(\pt,G)\cong \Nat(\hom_{\Pi,p,q},G) \]
\end{proposition}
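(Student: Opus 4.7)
The plan is to reduce the claim to the classical Yoneda–end formula, exploiting the identification of $(p,q)$\hyp{}ends with ordinary ends provided by \cref{prop:properties-of-p-q-ends}. By \cref{p-q-wedges-and-p-q-diagonals} and \cref{p-q-ends-as-ordinary-ends} of that proposition, the left-hand side rewrites as
\[
\pqDiNat{p}{q}(\pt,G) \cong \pqWedges{p}{q}{\pt}{G} \cong \Wedges{\pt}{\pqdiag^{\ast}G} \cong \int_{A} G^{\bsA}_{\bsA},
\]
and the classical Yoneda--end formula $\int_{A} H(A,A) \cong \Nat_{\clC^{\op}\times\clC}(\hom_\clC,H)$ applied to $H = \pqdiag^{\ast}G$ identifies this with $\Nat_{\clC^{\op}\times\clC}(\hom_\clC,\pqdiag^{\ast}G)$. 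The task therefore reduces to producing a natural bijection $\Nat_{\pq{\clC}}(\hom_{\Pi,p,q},G) \cong \Nat_{\clC^{\op}\times\clC}(\hom_\clC,\pqdiag^{\ast}G)$.

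For this last bijection I would construct explicit mutually inverse maps using the diagonal embedding. In the easy direction, given $\beta\colon \hom_{\Pi,p,q}\Rightarrow G$, restrict along $\pqdiag$ and precompose with the diagonal $\hom_\clC(A,B)\to\hom(A,B)^{pq}=\hom_{\Pi,p,q}(\bsA,\bsB)$, $f\mapsto(f,\ldots,f)$, to obtain a natural transformation $\hom_\clC\Rightarrow\pqdiag^{\ast}G$; this corresponds to the wedge $\alpha^{\beta}_{A}:=\beta_{\bsA,\bsA}(\id_{A},\ldots,\id_{A}) \in G^{\bsA}_{\bsA}$. The wedge condition then follows by applying naturality of $\beta$ to the two $\pq{\clC}$-morphisms $(\bsA;\bsf)$ and $(\bsf;\bsB)$, both of which send the diagonal identity tuple to the constant tuple $(f,\ldots,f)\in\hom_{\Pi,p,q}(\bsA,\bsB)$, forcing $G^{\bsA}_{\bsf}(\alpha^{\beta}_{A}) = \beta_{\bsA,\bsB}(f,\ldots,f) = G^{\bsf}_{\bsB}(\alpha^{\beta}_{B})$.

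In the reverse direction, given a wedge $\alpha$, I would define $\beta^{\alpha}\colon\hom_{\Pi,p,q}\Rightarrow G$ by transporting $\alpha$ along $\pq{\clC}$-morphisms out of diagonal objects: for each tuple $(f_{ij})\in\prod_{ij}\hom(A_i,B_j)$ admitting a factorisation $f_{ij} = h_{j}\circ g_{i}$ through a pivot $C$, via $g_{i}\colon A_{i}\to C$ and $h_{j}\colon C\to B_{j}$, set $\beta^{\alpha}_{\uA,\uB}(f_{ij}):=G(g_{1},\ldots,g_{p};h_{1},\ldots,h_{q})(\alpha_{C})$. The wedge condition on $\alpha$ guarantees that this definition is independent of the chosen factorisation; mutual invertibility of the two assignments is then a direct verification, and naturality in $G$ is automatic from the functoriality of each construction.

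\emph{The main obstacle} is the construction of $\beta^{\alpha}$ on tuples $(f_{ij})$ that do \emph{not} admit a factorisation $(h_{j}g_{i})$ through a single pivot — a situation arising generically when $\clC$ lacks finite products or coproducts, since the $pq$ components of such a tuple cannot in general be encoded via the $p+q$ components of a single $\pq{\clC}$-morphism out of a diagonal object. The cleanest workaround is to bypass an explicit construction on non-factorable tuples altogether by relying on the chain of isomorphisms set up in the first paragraph, which exhibits the desired bijection as an abstract consequence of the classical Yoneda lemma together with \cref{prop:properties-of-p-q-ends}; the explicit formula above then serves to recover $\beta^{\alpha}$ on a generating subset, with naturality in $\pq{\clC}$ uniquely extending it to the whole of $\hom_{\Pi,p,q}$.
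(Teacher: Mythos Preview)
Your reduction in the first paragraph is correct and mirrors the paper's route: \cref{pank} gives $\pqDiNat{p}{q}(\pt,G)\cong\Nat(\Lan_{\Delta_{q,p}}\hom_\clC,G)$, which is your $\Nat_{\clC^{\op}\times\clC}(\hom_\clC,\pqdiag^*G)$ after unwinding the Kan adjunction. What remains, as you correctly isolate, is the identification $\Lan_{\Delta_{q,p}}\hom_\clC\cong\hom_{\Pi,p,q}$, equivalently your bijection $\Nat(\hom_{\Pi,p,q},G)\cong\Nat(\hom_\clC,\pqdiag^*G)$.

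Your argument for this last step has a genuine gap. The proposed ``workaround'' is circular: the chain in your first paragraph terminates at $\Nat(\hom_\clC,\pqdiag^*G)$ and never reaches $\Nat(\hom_{\Pi,p,q},G)$, so appealing back to it cannot supply the missing identification. Likewise, the assertion that naturality ``uniquely extends'' $\beta^\alpha$ from factorable tuples to all of $\hom_{\Pi,p,q}$ presupposes exactly what has to be shown --- that every element of $\hom_{\Pi,p,q}$ lies in the $\pq{\clC}$-orbit of a factorable one. The paper does not bypass this point; it isolates it as a separate lemma (\cref{ponk}), writing out the coequaliser presentation of $\Lan_{\Delta_{q,p}}\hom_\clC$, exhibiting the comparison map $\sigma$ into $\hom_{\Pi,p,q}$, and verifying the universal property by hand. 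The decisive observation in the case worked out there, $(p,q)=(2,1)$, is that every tuple \emph{is} factorable: one takes the single source object as pivot and sets $g=\id$, so that your formula for $\beta^\alpha$ is already total and the obstacle evaporates. The missing ingredient in your proposal is therefore not a detour around factorisation but an argument that a pivot always exists; your concern about non-factorable tuples is precisely the content of \cref{ponk}, and when $\min(p,q)=1$ it dissolves for the trivial reason just stated.
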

The proof of \cref{dis} requires several lemmas (\cref{pink,pank,ponk}), which we now discuss.
\begin{lemma}\label{pink}
	Let $F,G : \clC^{\op}\times\clC \to \Set$ be functors. We have a natural isomorphism
	\[
		\DiNat(F,G)\cong \Nat(\ph,\left[F,G\right]).
	\]
\end{lemma}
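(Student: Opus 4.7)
My plan is to define a suitable auxiliary functor $[F,G]\colon\clC^{\op}\times\clC\to\Sets$ and then recognise the lemma as a direct application of the Yoneda lemma, with the dinaturality hexagon emerging as the naturality condition unwound in two different directions.

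Concretely, I would first set
\[
	[F,G](A,B)\defeq\Sets\bigl(F(B,A),\, G(A,B)\bigr),
\]
contravariant in $A$ via $(f\colon A'\to A)\mapsto\bigl(\psi\mapsto G(f,B)\circ\psi\circ F(B,f)\bigr)$, and covariant in $B$ via $(g\colon B\to B')\mapsto\bigl(\psi\mapsto G(A,g)\circ\psi\circ F(g,A)\bigr)$. A brief verification that these two actions commute gives functoriality of $[F,G]$. This done, I would define the candidate bijection in both directions. Given a natural transformation $\phi\colon\ph\Longrightarrow[F,G]$, set
\[
	\alpha_{A}\defeq\phi_{A,A}(\id_{A})\colon F(A,A)\longrightarrow G(A,A);
\]
given a dinatural family $\alpha\colon F\din G$, define $\phi_{A,B}\colon\hom_{\clC}(A,B)\to[F,G](A,B)$ by $f\mapsto G(A,f)\circ\alpha_{A}\circ F(f,A)$.

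The heart of the argument is the following direct computation: the naturality square for $\phi$ applied to the morphism $(\id_{A},f)\colon(A,A)\to(A,B)$ of $\clC^{\op}\times\clC$, evaluated at $\id_{A}\in\hom_{\clC}(A,A)$, yields $\phi_{A,B}(f)=G(A,f)\circ\alpha_{A}\circ F(f,A)$. The naturality square for $(f,\id_{B})\colon(B,B)\to(A,B)$, evaluated at $\id_{B}$, yields $\phi_{A,B}(f)=G(f,B)\circ\alpha_{B}\circ F(B,f)$. Equating the two presentations is exactly the commutativity of the dinaturality hexagon for $\alpha$ at $f$. Consequently, naturality of $\phi$ is equivalent to dinaturality of $\alpha$, and the assignments above are mutually inverse by construction (one direction is the Yoneda reconstruction $\phi_{A,B}(f)=[F,G](\id,f)\circ\phi_{A,A}(\id)$, the other is trivial on $\id_{A}$).

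Finally, naturality of the isomorphism in $F$ and $G$ follows from the evident functoriality of $[-,-]$ in both arguments: a natural transformation $F'\Longrightarrow F$ (resp.\ $G\Longrightarrow G'$) induces a natural transformation $[F,G]\Longrightarrow[F',G]$ (resp.\ $[F,G]\Longrightarrow[F,G']$) via pre- and post-composition, and under the Yoneda correspondence this matches pre- and post-composition of dinatural transformations with natural ones (which is well-defined by the analogue of \cref{theta-circ-alpha-is-indeed-a-p-q-dinatural-transformation} for $(p,q)=(1,1)$). The only point requiring care will be keeping the variance bookkeeping in $[F,G]$ straight so that both naturality squares collapse cleanly to the two legs of the hexagon; everything else is routine.
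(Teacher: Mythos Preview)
Your proof is correct, but it takes a genuinely different route from the paper's. The paper gives a three-line formal derivation entirely in the language of end calculus: it first invokes the identification $\DiNat(F,G)\cong\int_{A}\hom_{\clD}(F^{A}_{A},G^{A}_{A})$ established earlier (\cref{dinatural-transformations-as-a-2-2-end}), then rewrites this end as $\int_{A,B}[\hom_{\clC}(A,B),\hom_{\clD}(F^{B}_{A},G^{A}_{B})]$ using the standard fact that an end is the $\hom$-weighted limit, and finally recognises this as $\Nat(\ph,[F,G])$. No explicit bijection is ever written down.

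Your argument, by contrast, is elementary and self-contained: you define $[F,G]$ by hand, construct the bijection explicitly via evaluation at identities, and verify directly that the naturality squares for $\phi$ in the two coordinate directions collapse to the two legs of the dinaturality hexagon. This buys independence from the end-calculus machinery (in particular you do not need \cref{dinatural-transformations-as-a-2-2-end} as input), and it makes the correspondence concrete. The paper's approach is terser and more in keeping with its computational style, but it presupposes the reader is fluent with the toolbox built up earlier. Both are perfectly valid; yours would be the natural choice in a more introductory setting.
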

\begin{proof}%
	The proof is a formal derivation and mimics \cref{dinatural-transformations-as-a-2-2-end}:
	\begin{align*}
		\DiNat(F,G) & \cong \int_{A\in\clC}\hom_{\clD}\big(F^{A}_{A},G^{A}_{A}\big)                                                     \\
		            & \cong \int_{A,B\in\clC}\left[\hom_{\clC}(A,B),\hom_{\clD}\big(F^{B}_{A},G^{A}_{B}\big)\right]                     \\
		            & \cong \Nat\left(\hom_{\clC}(-_{1},-_{2}),\hom_{\clD}\big(F^{-_{2}}_{-_{1}},G^{-_{1}}_{-_{2}}\big)\right).\qedhere
	\end{align*}
\end{proof}%
\begin{lemma}\label{pank}%
	Let $G\colon\pq{\clC}[q][p]\to \clD$ be a functor. If $\clD$ is cocomplete, then
	\[\pqDiNat{p}{q}\left(\Delta_{\pt},G\right)\cong\Nat\left(\Lan_{\Delta_{q,p}}\ph,G\right).\]
\end{lemma}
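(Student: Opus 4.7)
My plan is to chain three natural bijections, each supplied by a result already established in the paper. First, I would apply \cref{higher-arity-dinaturality-via-ordinary-dinaturality} with $F = \Delta_{\pt}$; since $\pqdiag^{*}\Delta_{\pt}\cong\Delta_{\pt}$ (precomposition preserves constants), this yields
\[
\pqDiNat{p}{q}(\Delta_{\pt}, G) \;\cong\; \DiNat\bigl(\Delta_{\pt}, \Delta_{q,p}^{*} G\bigr),
\]
reducing higher arity dinaturality out of a constant functor to ordinary $(1,1)$-dinaturality.

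Next, I would specialise \cref{pink} at $F = \Delta_{\pt}$: as $\pt$ is the unit of the $\Set$-tensoring on $\clD$ (available because $\clD$ is cocomplete), the internal hom $[\Delta_{\pt}, -]$ collapses to the identity, so the right-hand side of \cref{pink} becomes
\[
\DiNat\bigl(\Delta_{\pt}, \Delta_{q,p}^{*} G\bigr) \;\cong\; \Nat\bigl(\ph, \Delta_{q,p}^{*} G\bigr).
\]
This is where cocompleteness of $\clD$ is genuinely used: the end manipulations in the proof of \cref{pink} lift from $\Set$-valued to $\clD$-valued targets by means of the copowers and powers by sets that any cocomplete category possesses.

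Finally, the universal property of the left Kan extension $\Lan_{\Delta_{q,p}} \dashv \Delta_{q,p}^{*}$ (again a consequence of cocompleteness of $\clD$) gives
\[
\Nat\bigl(\ph, \Delta_{q,p}^{*} G\bigr) \;\cong\; \Nat\bigl(\Lan_{\Delta_{q,p}} \ph,\, G\bigr).
\]
Concatenating the three isomorphisms produces the claim, with naturality in $G$ inherited step-by-step. The main delicate point will be the middle step: one must carefully reconcile the identification $[\Delta_{\pt}, -] \cong \id$ with the target category $\clD$ in use, which is the only place where the argument is more than a purely formal concatenation of preceding lemmas; the outer two steps are just invocations of \cref{higher-arity-dinaturality-via-ordinary-dinaturality} and of the standard $\Lan\dashv(-)^{*}$ adjunction.
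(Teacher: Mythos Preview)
Your proposal is correct and follows essentially the same route as the paper's proof: the paper's argument is exactly the chain
\[
\pqDiNat{p}{q}(\Delta_{\pt},G)\cong\DiNat(\Delta_{p,q}^{*}\Delta_{\pt},\Delta_{q,p}^{*}G)\cong\Nat(\ph,[\Delta_{\pt},\Delta_{q,p}^{*}G])\cong\Nat(\ph,\Delta_{q,p}^{*}G)\cong\Nat(\Lan_{\Delta_{q,p}}\ph,G),
\]
invoking \cref{higher-arity-dinaturality-via-ordinary-dinaturality}, then \cref{pink}, then the collapse $[\Delta_{\pt},-]\cong\id$, then the $\Lan\dashv(-)^{*}$ adjunction. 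You have correctly flagged the one genuinely delicate point the paper leaves implicit, namely that \cref{pink} is stated for $\Set$-valued functors while here the target is $\clD$.
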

\begin{proof}%
	We have
	\begin{align*}
		\pqDiNat{p}{q}\left(\Delta_{\pt},G\right) & \cong \DiNat\left(\Delta_{p,q}^{*}\Delta_{\pt},\Delta_{q,p}^{*}G\right),                \\
		                                          & \cong \Nat\left(\ph,\left[\Delta_{p,q}^{*}\Delta_{\pt},\Delta_{q,p}^{*}G\right]\right). \\
		                                          & \cong \Nat\left(\ph,\left[\Delta_{\pt}',\Delta_{q,p}^{*}G\right]\right)                 \\
		                                          & \cong \Nat\left(\ph,\Delta_{q,p}^{*}G\right)                                            \\
		                                          & \cong \Nat\left(\Lan_{\Delta_{q,p}}\ph,G\right).\qedhere
	\end{align*}
\end{proof}
\begin{remark}[Computing $\Lan_{\Delta_{q,p}}\ph$]\label{computing-lan-delta-q-p-ph}%
	We have
	\begin{align}
		\Lan_{\Delta_{q,p}}\ph & \cong  \int^{A,B\in\clC}\Hom_{\pq{\clC}[q][p]}\left(\Delta_{q,p}((A,B));(-,-)\right)\odot \ph^{A}_{B}                            \notag        \\
		                       & \cong  \int^{A,B\in\clC}\Hom_{\pq{\clC}[q][p]}\left((\bsA,\bsB);(-,-)\right)\odot \ph^{A}_{B}                                    \notag        \\
		                       & \cong  \int^{A\in\clC}\Hom_{\pq{\clC}[q][p]}\left((\bsA,\bsA);(-,-)\right)                                                       \notag        \\
		                       & \defeq \int^{A\in\clC}\ph^{-_{1}}_{A}\times\cdots\times\ph^{-_{q}}_{A}\times\ph^{A}_{-_{1}}\times\cdots\times\ph^{A}_{-_{p}},\label{eq:ponk-2}
	\end{align}
	meaning the end of
	\begin{equation}\label{eq:ckgpt-defining-end}
		(A,B) \mapsto \ph^{-_{1}}_{B}\times\cdots\times\ph^{-_{q}}_{B}\times\ph^{A}_{-_{1}}\times\cdots\times\ph^{A}_{-_{p}}.
	\end{equation}
\end{remark}
\begin{lemma}\label{ponk}
	There is an isomorphism of functors
	\begin{equation}\label{eq:ponk}
		\Lan_{\Delta_{q,p}}\ph
		\cong
		\hom_{\Pi,p,q}.
	\end{equation}
\end{lemma}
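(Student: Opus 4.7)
The starting point is the coend formula for $\Lan_{\Delta_{q,p}}\ph$ derived in \cref{computing-lan-delta-q-p-ph}, namely
$$
\Lan_{\Delta_{q,p}}\ph(\uC;\uD) \;\cong\; \int^{A\in\clC}\prod_{i=1}^{q}\hom(C_i,A)\times\prod_{j=1}^{p}\hom(A,D_j).
$$
The strategy is to compress this into $\prod_{i,j}\hom(C_i,D_j)$ by packaging one factor of the integrand as a single representable and then applying co-Yoneda once.

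Concretely, by the universal property of products (carried out formally in $\PSh(\clC)$ if $\clC$ itself lacks the product $\prod_j D_j$, where the Yoneda embedding is fully faithful and small limits exist) one has $\prod_{j=1}^{p}\hom(A,D_j)\cong\hom\big(A,\prod_j D_j\big)$. Writing $F(A)$ for the remaining factor $\prod_i\hom(C_i,A)$, which is covariant in $A$, the classical co-Yoneda identity $\int^A F(A)\times\hom(A,Y)\cong F(Y)$ with $Y=\prod_j D_j$ gives
$$
\int^A\prod_i\hom(C_i,A)\times\hom\Big(A,\prod_j D_j\Big)\;\cong\;\prod_i\hom\Big(C_i,\prod_j D_j\Big).
$$
A final distribution of $\hom(C_i,-)$ over the product yields $\prod_i\prod_j\hom(C_i,D_j)=\hom_{\Pi,p,q}(\uC;\uD)$, as required. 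Naturality in the external variables is automatic, since every identification employed is natural in $\uC$ and $\uD$.

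The delicate point is the packaging step: it silently replaces a finite product of hom-sets with a single hom, which lives either in $\clC$ or in $\PSh(\clC)$. A more intrinsic --- and arguably cleaner --- alternative is to exhibit the isomorphism directly via the composition cowedge
$$
\Phi_A\colon\prod_i\hom(C_i,A)\times\prod_j\hom(A,D_j)\longrightarrow\prod_{i,j}\hom(C_i,D_j),\qquad((f_i),(g_j))\longmapsto(g_j\circ f_i)_{i,j},
$$
which is patently dinatural in $A$, and to verify universality by factoring an arbitrary cowedge through $\Phi$. Either route delivers the stated isomorphism of functors, and in both the conceptual burden reduces to a single application of the coend Yoneda reduction $\int^A\hom(X,A)\times\hom(A,Y)\cong\hom(X,Y)$.
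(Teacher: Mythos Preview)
Your primary argument contains a genuine gap at the $\PSh{\clC}$ step. You correctly observe that $\prod_j \hom(A, D_j) \cong \hom_{\PSh{\clC}}(\yo A, \prod_j \yo D_j)$, but co-Yoneda in the form $\int^{A\in\clC} F(A)\times\hom(A,Y)\cong F(Y)$ requires $Y\in\clC_o$; when $Y=\prod_j \yo D_j$ is not representable, unwinding via the Yoneda lemma just returns you to $\int^{A} F(A)\times\prod_j\hom(A,D_j)$, which is exactly where you started. The argument does go through when $\clC$ genuinely has the finite product $\prod_j D_j$, but the presheaf detour does not remove that hypothesis. In fact it cannot be removed: for the four-object poset $\clC$ with non-identity arrows $0\to2$, $0\to3$, $1\to2$, $1\to3$ and $(p,q)=(2,2)$, $C_1=0$, $C_2=1$, $D_1=2$, $D_2=3$, the integrand is empty at every $A$ while $\prod_{i,j}\hom(C_i,D_j)$ is a singleton, so the isomorphism already fails once $p,q\ge2$.

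Your second sketch --- the composition cowedge $\Phi_A$ with universality verified by hand --- is exactly the paper's route. The paper carries out the verification you leave implicit, defining $\widebar{\zeta}(f,g)\defeq\zeta(\id_A,f,g)$ and checking the coequaliser conditions, in the case $(p,q)=(2,1)$. That construction inserts an identity into the \emph{single} factor $\hom(A,X)$ and so only directly handles $q=1$ (or dually $p=1$). In that regime, incidentally, your co-Yoneda idea succeeds with no packaging at all: taking the lone $\hom(C_1,A)$ as the representable and $P(A)=\prod_j\hom(A,D_j)$ as a contravariant functor, the density formula gives $\int^A\hom(C_1,A)\times P(A)\cong P(C_1)=\prod_j\hom(C_1,D_j)$ in one line --- cleaner than the paper's explicit coequaliser check.
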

\begin{proof}
	We shall prove the case $(p,q)=(2,1)$, as the general case is analogous. Namely, we claim that
	\begin{align*}
        \int^{X\in\CatFont{C}}\ph^{-_{1}}_{X}\times\ph^{X}_{-_{2}}\times\ph^{X}_{-_{3}} & \cong \Lan_{\Delta_{1,2}}\ph                        \tag{\cref{computing-lan-delta-q-p-ph}}               \\
		                                                                                & \cong                                                       \hom_{\Pi,2,1}                                \\
		                                                                                & \defeq                                                      \ph^{-_{1}}_{-_{2}}\times\ph^{-_{1}}_{-_{3}}.
	\end{align*}
    Fix $A,B,C\in\CatFont{C}_{o}$. We will show that the diagram
	\[
		\begin{tikzcd}[ampersand replacement=\&]
			\displaystyle\coprod_{u\colon X\to Y}\ph^{A}_{X}\times\ph^{Y}_{B}\times\ph^{Y}_{C}
			\arrow[r,"\lambda",shift left=0.8]
			\arrow[r,"\rho"',shift right=0.8]
			\&
			\displaystyle\coprod_{X\in\CatFont{C}}\ph^{A}_{X}\times\ph^{X}_{B}\times\ph^{X}_{C}
			\arrow[r, "\sigma"]
			\&
			\ph^{A}_{B}\times\ph^{A}_{C},
		\end{tikzcd}
	\]
	where $\lambda$ and $\rho$ are induced by the maps given by
	\begin{align}
		\lambda\Big(\var[u] {X} Y;\var[f] {A} X,\var[g] Y {B},\var[h] Y C\Big) & = (f,g\circ u,h\circ u),\notag \\
		\rho   \Big(\var[u] {X} Y;\var[f] {A} X,\var[g] Y {B},\var[h] Y C\Big) & = (u\circ f,g,h),\notag
	\end{align}
	and $\sigma$ is induced by the maps
	\[
		\sigma_{X,X,X}\left(\var[f] {A} X,\var[g] X {B},\var[h] X C\right)
		\defeq
		(g\circ f,h\circ f),
	\]%
	is a coequaliser diagram. Firstly, note that $\sigma$ indeed coequalises $\lambda$ and $\rho$, as
	\begin{align*}
		\sigma(\lambda(u,f,g,h)) & = \sigma(f,g\circ u,h\circ u)           \\
		                         & = ((g\circ u)\circ f,(h\circ u)\circ f) \\
		                         & = (g\circ (u\circ f),h\circ (u\circ f)) \\
		                         & = \sigma(u\circ f,g,h)                  \\
		                         & = \sigma(\rho(u,f,g,h)).
	\end{align*}
	Now, given another morphism
	\[
		\begin{tikzcd}[ ampersand replacement=\&]
			\displaystyle\coprod_{X\in\CatFont{C}}\ph^{A}_{X}\times\ph^{X}_{B}\times\ph^{X}_{C}
			\arrow[r, "\zeta"]
			\&
			E,
		\end{tikzcd}
	\]
	coequalising $(\lambda,\rho)$, i.e.\ such that
	\begin{equation}\label{zeta-coequalises-lambda-and-rho-dim-1}
		\zeta(f,g\circ u,h\circ u)=\zeta(u\circ f,g,h),
	\end{equation}
	we claim that there exists a unique morphism $\widebar{\zeta}\colon\ph^{A}_{B}\times\ph^{A}_{C}\xlongdashrightarrow{\exists!}E$ making the diagram
	\begin{diagram}\label{di:magic-diagram-dim-1}
		\begin{tikzcd}[ampersand replacement=\&]
			\displaystyle\coprod_{u\colon X\to Y}\ph^{A}_{X}\times\ph^{Y}_{B}\times\ph^{Y}_{C}
			\arrow[r,"\lambda",shift left=1]
			\arrow[r,"\rho"',shift right=1]
			\&
			\displaystyle\coprod_{X\in\CatFont{C}}\ph^{A}_{X}\times\ph^{X}_{B}\times\ph^{X}_{C}
			\arrow[r, "\sigma"]
			\arrow[rd, "\zeta"']
			\&
			\ph^{A}_{B}\times\ph^{A}_{C},
			\arrow[d,"\widebar{\zeta}", dashed]
			\\
			\&
			\&
			E\mrp{,}
		\end{tikzcd}
	\end{diagram}%
	commute. Indeed:
	\begin{enumtag}{trp}
		\item\SloganFont{Existence}For each pair $\left(\var[f]{A}B,\var[g]{A}C\right)$ in $\ph^{A}_{B}\times\ph^{A}_{C}$, we define%
		\footnote{%
			Note that in writing $\zeta(\id_{A},f,g)$ we are applying $\zeta$ to the triple $(\id_{A},f,g)$ in the component $\ph^{A}_{A}\times\ph^{A}_{B}\times\ph^{A}_{C}$ of $\displaystyle\coprod_{X\in\CatFont{C}}\ph^{A}_{X}\times\ph^{X}_{B}\times\ph^{X}_{C}$.
		}%
		\[\widebar{\zeta}(f,g)\defeq\zeta(\id_{A},f,g).\]
		\item\SloganFont{Uniqueness}Let $\widetilde{\zeta}\colon\ph^{A}_{B}\times\ph^{A}_{C}\to  E$ be another morphism making \cref{di:magic-diagram-dim-1} commute. Then, for each pair $\left(\var[f]{A}B,\var[g]{A}C\right)$ in $\ph^{A}_{B}\times\ph^{A}_{C}$, we have
		\begin{align*}
			\widetilde{\zeta}(f,g) & =      \widetilde{\zeta}(f\circ\id_{A},g\circ\id_{A})                                                            \\
			                       & \defeq \widetilde{\zeta}(\sigma(\id_{A},f,g))                                                                    \\
                                   & =      \zeta(\id_{A},f,g)                             \tag{by the commutativity of \cref{di:magic-diagram-dim-1}}\\
			                       & \defeq \widebar{\zeta}(f,g).
		\end{align*}
		\item\SloganFont{\cref{di:magic-diagram-dim-1} commutes}For each triple $\left(\var[f]{A}X,\var[g]{X}B,\var[h]{X}C\right)$ in $\smash{\displaystyle\coprod_{X\in\CatFont{C}}\ph^{A}_{X}\times\ph^{X}_{B}\times\ph^{X}_{C}}$, we have
		\begin{align*}
			\widebar{\zeta}(\sigma(f,g,h))                        & =      \widebar{\zeta}(g\circ f,h\circ f)                                                   \\
			                                                      & \defeq \zeta(\id_{A},g\circ f,h\circ f)                                                     \\
                                                                  & =      \zeta(\id_{A}\circ f,g,h)          \tag{\cref{zeta-coequalises-lambda-and-rho-dim-1}}\\
			                                                      & =      \zeta(f,g,h).\qedhere
		\end{align*}
	\end{enumtag}
\end{proof}
Taken all together, \cref{pink}, \cref{pank}, and \cref{ponk} yield \cref{dis}.
\begin{corollary}[$(p,q)$\hyp{}co/ends as weighted co/limits]\label{pq_are_weighted}
	Let $G : \pq{\clC} \to \Set$ be a functor. We have functorial isomorphisms%
	\footnote{%
		For $(p,q)=(1,1)$, this amounts to the well-known statement that the co/end of $T : \oo{\clC}$ is the weighted co/limit of $T$ by the hom functor $\Hom_{\CatFont{C}}(-,-):\oo{\clC} \to \Set$; see \cite[Section 3.10]{kelly}.%
	}%
	\[
		\pqEnd{p}{q}{\bsA\in\clC}G^{\bsA}_{\bsA}   \cong \wlim{\hom_{\Pi,p,q}} G,   \qquad
		\pqCoend{p}{q}{\bsA\in\clC}G^{\bsA}_{\bsA} \cong \wcolim{\hom_{\Pi,p,q}} G,
	\]
\end{corollary}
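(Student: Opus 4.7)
The plan is to chain \cref{dis} with the defining universal property of the $(p,q)$-end from \cref{def:p-q-ends}, exactly as in the footnote's classical $(1,1)$-case.

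For the end half, I would first use the Yoneda isomorphism $\Set(\pt,-)\cong\id_{\Set}$ together with the representability of $(p,q)$-wedges to rewrite the underlying set of the $(p,q)$-end:
\[
\pqEnd{p}{q}{A\in\clC} G^{\bsA}_{\bsA}
\;\cong\; \Set\!\Bigl(\pt,\,\pqEnd{p}{q}{A\in\clC} G^{\bsA}_{\bsA}\Bigr)
\;\cong\; \pqWedges{p}{q}{\pt}{G}
\;=\; \pqDiNat{p}{q}(\Delta_\pt, G).
\]
\cref{dis} then supplies the natural bijection $\pqDiNat{p}{q}(\Delta_\pt, G) \cong \Nat(\hom_{\Pi,p,q}, G)$, and because the ambient target category is $\Set$, this hom of natural transformations is, tautologically, the weighted limit $\wlim{\hom_{\Pi,p,q}} G$. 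Functoriality in $G$ is inherited step by step, as each of the isomorphisms used is natural in $G$.

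The coend case would go by formal duality. One uses the coend's universal property to identify $\pqCoend{p}{q}{A} G^{\bsA}_{\bsA}$ with the set of $(p,q)$-cowedges from $G$ to the point, then applies the dual of \cref{dis}, obtained by swapping left for right Kan extensions and ends for coends throughout the proofs of \cref{pink}, \cref{pank}, and \cref{ponk}. The identification with $\wcolim{\hom_{\Pi,p,q}} G$ then follows from the coend formula for $\Set$-valued weighted colimits, combined with the reduction of $(p,q)$-coends to ordinary coends from \cref{prop:properties-of-p-q-ends}.

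The only real care, and the main obstacle, lies in matching the variance of the weight $\hom_{\Pi,p,q}$ across the two cases; this is handled once and for all by observing that $\hom_{\Pi,p,q}$ factors as $\Hom_{\clC} \circ \M_{p,q}$ through the $(p,q)$-biproduct of \cref{biprod-p-q}, so that the weight pairs with $G$ coherently through its diagonal action on both sides. Beyond this bookkeeping, no new idea is required: the corollary is essentially a repackaging of \cref{dis}.
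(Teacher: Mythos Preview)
Your end argument is correct and is essentially the paper's own proof specialised to the single test object $\pt$: the paper establishes, for every $A\in\clD$, the chain
\[
\clD\!\left(A,\pqEnd{p}{q}{}G\right)\;\cong\;\pqDiNat{p}{q}\!\left(\Delta_{\pt},\clD(A,G)\right)\;\cong\;\Nat\!\left(\hom_{\Pi,p,q},\clD(A,G)\right)\;\cong\;\clD\!\left(A,\wlim{\hom_{\Pi,p,q}}G\right)
\]
and then invokes Yoneda; taking $\clD=\Set$ and $A=\pt$ collapses this to exactly the line you wrote, since $\Set(\pt,-)\cong\id_{\Set}$.

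The coend argument, however, contains a genuine error. You write ``identify $\pqCoend{p}{q}{A}G^{\bsA}_{\bsA}$ with the set of $(p,q)$-cowedges from $G$ to the point'', but this identification is false: the universal property of the coend gives $\Set\big(\pqCoend{p}{q}{}G,\pt\big)\cong\pqCoWedges{p}{q}{\pt}{G}$, and since $\pt$ is terminal in $\Set$ both sides are singletons, independently of $G$. The same triviality afflicts any ``dual of \cref{dis}'' describing $\pqDiNat{p}{q}(G,\Delta_\pt)$. The asymmetry is that $\Set(\pt,-)\cong\id$ while $\Set(-,\pt)\cong\Delta_\pt$, so probing with $\pt$ only detects limits, not colimits.

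The paper's actual dual argument keeps the test object $E$ free: one has, naturally in $E$,
\[
\Set\!\left(\pqCoend{p}{q}{}G,\,E\right)\;\cong\;\pqCoWedges{p}{q}{E}{G}\;\cong\;\pqDiNat{p}{q}\!\left(\Delta_\pt,\Set(G,E)\right)\;\cong\;\Nat\!\left(\hom_{\Pi,p,q},\Set(G,E)\right),
\]
the middle bijection being the currying $\big(G^{\bsA}_{\bsA}\to E\big)\leftrightarrow\big(\pt\to\Set(G^{\bsA}_{\bsA},E)\big)$ and the last being \cref{dis} applied to the $\Set$-valued functor $\Set(G,E)$. The right-hand side is by definition $\Set\big(\wcolim{\hom_{\Pi,p,q}}G,E\big)$, and Yoneda finishes. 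Your invocation of \cref{prop:properties-of-p-q-ends} and ``the coend formula for $\Set$-valued weighted colimits'' does not substitute for this step: those ingredients alone do not produce the required bijection without re-deriving what \cref{dis} already provides.
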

\begin{proof}
	We have isomorphisms, natural in $A\in\clD$
	\begin{align*}
		\clD\left(A,\pqEnd{p}{q}{A}G^{\bsA}_{\bsA}\right) &\defeq \pqDiNat{p}{q}\left({\one},\clD(A,G)\right)                 \\
                                                          &\cong  \Nat\left(\hom_{\Pi,p,q},\clD(A,G)\right)   \tag{\cref{dis}}\\
		                                                  &\defeq \clD(A,\wlim{\hom_{\Pi,p,q}} G).
	\end{align*}
	The result then follows from the Yoneda lemma. A dual argument yields the second identity.
\end{proof}
From \cref{pq_are_weighted}, a general fact about weighted limits (\cite[Lemma 4.3.1]{cofriend}) yields
\begin{corollary}\label{adjadjadj}
	There is an adjunction
	\[
		\begin{tikzcd}[column sep=2cm]
			\Cat(\pq{\clC},\clD) \ar[r,shift right=.5em, "\pqEnd{p}{q}{\clC}"'] \ar[r,phantom, "\perp"]&\ar[l,shift right=.5em, "\hom_{\Pi,p,q}\odot -"'] \clD\mrp{,}
		\end{tikzcd}
	\]
	where the left adjoint $\hom_{\Pi,p,q}\odot -$ is defined by $D\mapsto \big((\uA,\uB)\mapsto \hom_{\Pi,p,q}(\uA,\uB)\odot D\big)$.

	Dually, there is an adjunction
	\[
		\begin{tikzcd}[column sep=2cm]
			\Cat(\pq{\clC},\clD) \ar[r,shift left=.5em, "\pqCoend{p}{q}{\clC}"] \ar[r,phantom, "\perp"]&\ar[l,shift left=.5em, "\hom_{\Pi,p,q}\pitchfork -"] \clD\mrp{,}
		\end{tikzcd}
	\]
	where the left adjoint $\hom_{\Pi,p,q}\odot -$ is defined by $D\mapsto \big((\uA,\uB)\mapsto \hom_{\Pi,p,q}(\uA,\uB)\odot D\big)$, and the right adjoint $\hom_{\Pi,p,q}\pitchfork -$ is defined by $D\mapsto \big((\uA,\uB)\mapsto \hom_{\Pi,p,q}(\uA,\uB)\pitchfork D\big)$.
\end{corollary}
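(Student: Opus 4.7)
The plan is to reduce the claim to two ingredients already available in the paper: \cref{pq_are_weighted}, which identifies $\pqEnd{p}{q}{\clC}(-)$ with the weighted limit $\wlim{\hom_{\Pi,p,q}}(-)$ (and dually for the $(p,q)$\hyp{}coend), together with the general fact, recorded as \cite[Lemma 4.3.1]{cofriend}, that for any weight $W\colon\mathcal{A}\to\Sets$ the weighted limit functor $\wlim{W}(-)\colon\Cat(\mathcal{A},\clD)\to\clD$ is right adjoint to the copower-by-$W$ functor $X\mapsto\bigl(A\mapsto W(A)\odot X\bigr)$, and dually $\wcolim{W}(-)$ is left adjoint to the cotensor-by-$W$ functor $X\mapsto\bigl(A\mapsto W(A)\pitchfork X\bigr)$. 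Instantiating $\mathcal{A}=\pq{\clC}$ and $W=\hom_{\Pi,p,q}$ then delivers both adjunctions at once.

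For a self-contained derivation, I would instead assemble the first adjunction directly from the natural chain
\begin{align*}
\Cat(\pq{\clC},\clD)\bigl(\hom_{\Pi,p,q}\odot X,\; G\bigr)
&\cong \Nat\bigl(\hom_{\Pi,p,q},\; \clD(X,G)\bigr) \\
&\cong \pqDiNat{p}{q}\bigl(\pt,\; \clD(X,G)\bigr) \\
&\cong \clD\Bigl(X,\; \pqEnd{p}{q}{A\in\clC} G^{\bsA}_{\bsA}\Bigr),
\end{align*}
natural in $X\in\clD_o$ and $G\in\Cat(\pq{\clC},\clD)_o$. The first isomorphism is the pointwise universal property of the copower; the second is \cref{dis} applied to the $\Sets$\hyp{}valued functor $(\uA,\uB)\mapsto\clD(X,G^{\uA}_{\uB})$; the third combines \cref{p-q-ends-commute-with-homs} with the defining representability of the $(p,q)$\hyp{}end. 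The dual adjunction is obtained by the symmetric chain, replacing the copower with the cotensor $\hom_{\Pi,p,q}\pitchfork-$ and invoking the initiality of the $(p,q)$\hyp{}cowedge at $\pqCoend{p}{q}{A\in\clC}G^{\bsA}_{\bsA}$.

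I do not expect any genuine obstacle: the proof is a bookkeeping exercise stringing together universal properties already proved. The only points requiring care are keeping the variance straight—in particular, verifying that $\hom_{\Pi,p,q}\odot-$ is really the left adjoint and the $(p,q)$\hyp{}end the right adjoint (and dually for the coend), which is immediate from the direction of the bijection above—and checking that the $\Sets$\hyp{}valued hypothesis of \cref{dis} is satisfied, which is automatic once $\clD(X,-)$ is applied.
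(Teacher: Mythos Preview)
Your proposal is correct and your first paragraph is exactly the paper's proof: the paper simply writes ``From \cref{pq_are_weighted}, a general fact about weighted limits (\cite[Lemma 4.3.1]{cofriend}) yields'' and stops there. Your second paragraph unpacks this into an explicit chain of natural bijections, which is a welcome elaboration but not a genuinely different route.
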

\begin{lemma}[Shishido identity, first form]\label{shishido-identity-1}
    The product-hom functor of \cref{prod_hom} satisfies the \emph{Shishido identity}:\footnote{Shishido Baiken is the name of a Japanese swordsman (his existence is attested in the \emph{Nitenki} written in 1776, but the reliability of the text is currently object of debate). Baiken was a skilled master of \emph{kusarigama-jutsu} and, according to the legend, lost a duel (and his life) with Miyamoto Musashi.}
	\[
		\hom_{\Pi,p,q}\times\hom_{\Pi,r,s}
		\cong
		\hom_{\Pi,r,s}\times\hom_{\Pi,p,q}
		\cong
		\prod_{i=1}^{p+r}\prod_{j=1}^{q+s}\ph^{(-_{i},-_{i})}_{(-_{j},-_{j})}.
	\]
\end{lemma}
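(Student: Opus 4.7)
The plan is to prove both isomorphisms by direct unfolding of definitions, reducing to the commutativity and associativity of finite Cartesian products in $\Set$ applied pointwise in the variables.

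First, I would unpack $\hom_{\Pi,p,q}$ from \cref{prod_hom} as the functor sending $(\uA,\uB)\in \pq{\clC}$ to the set $\prod_{i=1}^{p}\prod_{j=1}^{q}\hom_\clC(A_i,B_j)$. The external product $\hom_{\Pi,p,q}\times\hom_{\Pi,r,s}$ is then a functor on $\pq{\clC}\times\pq{\clC}[r][s]$, which under the canonical identification with $\pq{\clC}[p+r][q+s]$ (concatenating tuples and relabelling indices) sends a tuple $(\uA;\uB)$ to a product indexed by a corresponding disjoint union of pairs. The first isomorphism $\hom_{\Pi,p,q}\times\hom_{\Pi,r,s}\cong\hom_{\Pi,r,s}\times\hom_{\Pi,p,q}$ is then nothing more than the symmetry of the Cartesian product, which is natural in both arguments and therefore assembles into a functor isomorphism.

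For the second isomorphism, I would display an explicit bijection of indexing sets. Both sides are products of copies of $\hom_\clC(-_i,-_j)$ (interpreted with the paired-variable convention for $\ph^{(-_i,-_i)}_{(-_j,-_j)}$ as spelled out in the definition); after unfolding, the bijection between their indexing sets is built from the obvious splittings
\[
\{1,\dots,p+r\}=\{1,\dots,p\}\sqcup\{p+1,\dots,p+r\},\qquad \{1,\dots,q+s\}=\{1,\dots,q\}\sqcup\{q+1,\dots,q+s\},
\]
and the identification of factors is then forced. Associativity and commutativity of $\times$ then provide the required iso, naturally in all variables.

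There is no essential obstacle: the entire content is bookkeeping on indices. The one point requiring care is tracking the contravariant versus covariant roles of each relabelled variable, so that the reshuffling of the tuple $(\uA;\uB)\in\pq{\clC}[p+r][q+s]$ is functorial and compatible with the paired notation $(-_i,-_i)$, $(-_j,-_j)$; once this is verified on objects, functoriality on morphisms is automatic since everything is built from the bifunctor $\hom_\clC$.
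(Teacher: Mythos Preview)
Your proposal is correct and follows essentially the same approach as the paper's proof: both dispatch the first isomorphism by symmetry of the Cartesian product, and both obtain the second by unfolding the definition of $\hom_{\Pi}$ and reindexing the iterated product via the splittings $\{1,\dots,p+r\}=\{1,\dots,p\}\sqcup\{p+1,\dots,p+r\}$ and $\{1,\dots,q+s\}=\{1,\dots,q\}\sqcup\{q+1,\dots,q+s\}$. The paper presents this as a short chain of displayed natural isomorphisms evaluated at a generic tuple $((\uA;\uA'),(\uB;\uB'))$, while you describe the same reindexing more discursively; there is no substantive difference.
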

\begin{proof}
	The first isomorphism is clear. For the second one, let $(\uA,\uB)\in\pq{\clC}$ and $\uA',\uB'\in\pq{\clC}[r][s]$; we then have natural isomorphisms
	\begin{align*}
		\hom_{\Pi,p,q}\times \hom_{\Pi,r,s} ((\uA;\uA'), (\uB;\uB')) & =     \prod_{i,j=1}^{p,q}\Hom_{\CatFont{C}}\left(A_i,B_j\right) \times \prod_{h,k=1}^{r,s}\Hom_{\CatFont{C}}\left(A'_h, B'_k\right) \\
		                                                             & \cong \prod_{i,j=1}^{p,q}\prod_{h,s}^{r,s}\Hom_{\CatFont{C}}\left(A_i,B_j\right) \times \Hom_{\CatFont{C}}\left(A'_h, B'_k\right)   \\
		                                                             & \cong \prod_{i,j=1}^{p+r,q+s}\Hom_{\CatFont{C}}\left((\uA; \uA')_i,(\uB; \uB')_j\right).
	\end{align*}
	where the tuple $\uA;\uA'$ is the juxtaposition of $\uA$ and $\uA'$.
\end{proof}
\begin{theorem}[The Fubini Rule]\label{fubini-for-p-q-co-ends}
	Let $D :\clA^{(p,q)}\times\clB^{(r,s)}\to \clD$ be a functor. Then
	\begin{gather}
		\pqEnd{p+r}{q+s}{(A,B)}D^{\bsA,\bsB}_{\bsA,\bsB}
		\cong
		\pqEnd{p}{q}{A}\pqEnd{r}{s}{B}D^{\bsA,\bsB}_{\bsA,\bsB}
		\cong
		\pqEnd{r}{s}{B}\pqEnd{p}{q}{A}D^{\bsA,\bsB}_{\bsA,\bsB},\label{fubini-expressions}\\
		\pqCoend{p+r}{q+s}{(A,B)}D^{\bsA,\bsB}_{\bsA,\bsB}
		\cong
		\pqCoend{p}{q}{A}\pqCoend{r}{s}{B}D^{\bsA,\bsB}_{\bsA,\bsB}
		\cong
		\pqCoend{r}{s}{B}\pqCoend{p}{q}{A}D^{\bsA,\bsB}_{\bsA,\bsB}
	\end{gather}
	as objects of $\clD$, meaning that any of these expressions exist if and only if the others do, and, if so, they are are all canonically isomorphic.%
\end{theorem}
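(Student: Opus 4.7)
The plan is to reduce the higher-arity Fubini rule to the classical Fubini rule for ordinary ends, exploiting the characterisation of $(p,q)$\hyp{}ends as ordinary ends established in \cref{prop:properties-of-p-q-ends}\ref{p-q-ends-as-ordinary-ends}. Applying that result to each of the three expressions in \eqref{fubini-expressions}, the leftmost $(p+r,q+s)$\hyp{}end becomes an ordinary end over the product category $\clA\times\clB$,
\[
\pqEnd{p+r}{q+s}{(A,B)} D^{\bsA,\bsB}_{\bsA,\bsB}
\;\cong\;
\int_{(A,B)\in\clA\times\clB} \Delta_{p+r,q+s}^{*}(D)^{(A,B)}_{(A,B)},
\]
while the two iterated expressions become iterated ordinary ends of the forms $\int_A\!\int_B$ and $\int_B\!\int_A$ of essentially the same integrand.

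The next step is to verify the following compatibility: under the canonical reshuffling of factors $(\clA\times\clB)^{(p+r,q+s)}\cong \clA^{(p,q)}\times\clB^{(r,s)}$, the $(p+r,q+s)$-diagonal of $\clA\times\clB$ decomposes as the product of the $(p,q)$-diagonal of $\clA$ and the $(r,s)$-diagonal of $\clB$. Consequently the three ordinary-end integrands agree up to permutation of their $\clA$- and $\clB$-variables. The classical Fubini rule for ordinary ends (see, e.g., \cite{cofriend}) then supplies the natural isomorphisms
\[
\int_{(A,B)} T^{(A,B)}_{(A,B)} \;\cong\; \int_A\!\int_B T^{(A,B)}_{(A,B)} \;\cong\; \int_B\!\int_A T^{(A,B)}_{(A,B)}
\]
for any $T\colon (\clA\times\clB)^{\op}\times(\clA\times\clB)\to\clD$, which yields the three desired identifications. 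The coend case is dual.

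An alternative, arguably cleaner route would use \cref{adjadjadj}: each $(p,q)$\hyp{}end functor is a right adjoint (to copowering by $\hom_{\Pi,p,q}$), so the composite $\pqEnd{p}{q}{A}\pqEnd{r}{s}{B}$ is itself a right adjoint whose left adjoint, by associativity of copowers, is copowering by the product $\hom_{\Pi,p,q}\times\hom_{\Pi,r,s}$. The Shishido identity (\cref{shishido-identity-1}) identifies this with copowering by $\hom_{\Pi,p+r,q+s}$, which is the left adjoint of $\pqEnd{p+r}{q+s}{(A,B)}$; uniqueness of adjoints would then deliver the result, and the symmetry in the Shishido identity would immediately give the commutativity of the iterated ends.

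I expect the main obstacle to be purely notational: in either approach one must keep careful track of the bijections reshuffling the interleaved contravariant and covariant factors of $\clA$ and $\clB$, and verify that the relevant diagonal (or hom) functors intertwine under these reshufflings. No genuinely new conceptual ingredients beyond \cref{prop:properties-of-p-q-ends}\ref{p-q-ends-as-ordinary-ends}, the Shishido identity, and classical Fubini should be required.
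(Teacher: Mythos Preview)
Your ``alternative'' route via \cref{adjadjadj} and the Shishido identity is precisely the proof the paper gives: one checks that the left adjoints of the three end functors agree (using $(A\pitchfork B)\pitchfork C\cong(A\times B)\pitchfork C$ together with \cref{shishido-identity-1}) and concludes by uniqueness of adjoints. So that part of your proposal is on the nose.

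Your primary approach, reducing to the classical Fubini rule via \cref{prop:properties-of-p-q-ends}\ref{p-q-ends-as-ordinary-ends}, is morally correct but contains a false claim that needs repair. There is \emph{no} isomorphism
\[
(\clA\times\clB)^{(p+r,q+s)}\;\cong\;\clA^{(p,q)}\times\clB^{(r,s)}:
\]
count the factors---the left-hand side has $p+r$ copies of $\clA^{\op}$ and $q+s$ copies of $\clA$, whereas the right-hand side has only $p$ and $q$ of each. Consequently the $(p+r,q+s)$-diagonal of $\clA\times\clB$ does not land in the domain of $D$ at all, so one cannot directly apply \ref{p-q-ends-as-ordinary-ends} to the leftmost expression. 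What you need instead is to first mute-extend $D$ to a functor $\dummyF D\colon(\clA\times\clB)^{(p+r,q+s)}\to\clD$ via \cref{not:dummy} (this is implicit in the theorem statement), and then observe that
\[
(\dummyF D)\circ\Delta_{p+r,q+s}^{\clA\times\clB}
\;=\;
D\circ\bigl(\Delta_{p,q}^{\clA}\times\Delta_{r,s}^{\clB}\bigr)
\]
after the evident reshuffling of the domain $(\clA\times\clB)^{\op}\times(\clA\times\clB)\cong(\clA^{\op}\times\clA)\times(\clB^{\op}\times\clB)$. With this correction your argument goes through: all three expressions become ordinary co/ends of the same functor $D\circ(\Delta_{p,q}^{\clA}\times\Delta_{r,s}^{\clB})$, and classical Fubini finishes the job. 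This route is more elementary than the paper's (it avoids the weighted-limit machinery and the Shishido identity), at the cost of the bookkeeping you anticipated.
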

\begin{proof}
	To prove that three expressions in \cref{fubini-expressions} are isomorphic, it suffices to show that their adjoints (\cref{adjadjadj})
	\begin{align*}
		\left(\hom_{\Pi,p,q}\pitchfork\hom_{\Pi,r,s}\right)                                & \pitchfork (-) \\
		\left(\hom_{\Pi,r,s}\pitchfork\hom_{\Pi,p,q}\right)                                & \pitchfork (-) \\
		\left(\prod_{i=1}^{p+r}\prod_{j=1}^{q+s}\ph^{(-_{i},-_{i})}_{(-_{j},-_{j})}\right) & \pitchfork (-)
	\end{align*}
	are isomorphic, since adjoints are unique. As $(A\pitchfork B)\pitchfork C\cong(A\times B)\pitchfork C$, this follows from \cref{shishido-identity-1}.

	A suitably dualised argument yields the result for higher arity coends.
\end{proof}
\begin{remark}[Fubini does not reduce arity]%
	Note that $p,q,r,s$ can't be broken further: given a functor $G$ of type $\typepq{p}{q}$, its $(p,q)$\hyp{}end isn't in general expressible in terms of $(p-r,q-s)$-ends for suitable $r,s\ge 1$. This confirms the fact that iterated ends \emph{are not} higher arity ends. Instead, higher arity ends are particular ends.

	That is, \cref{fubini-for-p-q-co-ends} does not allow us to reduce the arity of a higher arity co/end when $\CatFont{A}=\CatFont{B}$:
	\[
		\pqEnd{p}{q}{A}\pqEnd{r}{s}{B}D^{\bsA,\bsB}_{\bsA,\bsB}
		\cong
		\pqEnd{p+r}{q+s}{(A,B)\in\CatFont{A}\times\CatFont{A}}D^{\bsA,\bsB}_{\bsA,\bsB}
		\ncong
		\pqEnd{p+r}{q+s}{A\in\CatFont{C}}D^{\bsA}_{\bsA}.
	\]
	This is already apparent from the classical Fubini rule, where, given a functor $T\colon\CatFont{C}^{\op}\times\CatFont{C}\times\CatFont{E}^{\op}\times\CatFont{E}\to \CatFont{D}$ with $\CatFont{C}=\CatFont{E}$, we have once again
	\[
		\int_{(A,B)\in\CatFont{C}\times\CatFont{C}}T((A,B),(A,B))
		\ncong
		\int_{A\in\CatFont{C}}T(A,A,A,A).
	\]
	The main point in both cases is that we are \say{integrating} over a pair $(A,B)$, and not over a single variable $A$.

	From the point of view of adjoints, we have in (e.g.) the $(p,q)=(1,1)$ case
	\begin{align*}
		(-)\odot\left(\ph^{-_{1}}_{-_{3}}\times\ph^{-_{1}}_{-_{4}}\times\ph^{-_{2}}_{-_{3}}\times\ph^{-_{2}}_{-_{4}}\right) & \dashv \pqCoend{2}{2}{A\in\CatFont{C}}D^{A,A}_{A,A}                  \\
		(-)\odot\underbrace{\ph^{(-_{1},-_{2})}_{(-_{3},-_{4})}}_{\ph^{-_{1}}_{-_{3}}\times\ph^{-_{2}}_{-_{4}}}             & \dashv \int^{(A,B)\in\CatFont{C}\times\CatFont{C}}D^{(A,B)}_{(A,B)},
	\end{align*}
	and of course
	\[
		\ph^{-_{1}}_{-_{3}}\times\ph^{-_{1}}_{-_{4}}\times\ph^{-_{2}}_{-_{3}}\times\ph^{-_{2}}_{-_{4}}
		\neq
		\ph^{(-_{1},-_{2})}_{(-_{3},-_{4})}
		=
		\ph^{-_{1}}_{-_{3}}\times\ph^{-_{2}}_{-_{4}},
	\]
	so $\int^{A\in\CatFont{C}}D^{A,A}_{A,A}$ and $\int^{(A,B)\in\CatFont{C}\times\CatFont{C}}D^{(A,B)}_{(A,B)}$ are different as well.
\end{remark}

\section{Examples: a session of callisthenics}\label{sec:examples-of-higher-arity-ends}
\subsection{Examples arranged by dimension}
The first roundup of examples is a series of sanity checks:
\begin{example}[$(0,0)$-co/ends]
	For the case where $p=q=0$, we look at functors of the form $D\colon\catpt\to \clD$, where $\catpt$ is the terminal category. It is evident that such a functor corresponds precisely to an object of $\clD$, a $(0,0)$-wedge corresponds to the identity on that object, and the $(0,0)$-end of $D$ is precisely that object.
\end{example}
\begin{example}[$(1,0)$- and $(0,1)$-co/ends]
	When $(p,q)=(1,0)$, we consider functors of the form $D\colon\clC^{\op}\to \clD$, and we see from the universal property of $(p,q)$\hyp{}ends that the $(1,0)$-end of $D$ is the limit of $D$. Similarly, the $(0,1)$-end of a functor $D\colon\clC\to \clD$ is again the limit of $D$.

	In particular, starting with a functor $D\colon\clC\to \clD$ and passing to the opposite functor $D^{\op}\colon\clC^{\op}\to \clD^{\op}$, we get isomorphisms
	\begin{align*}
		\pqEnd{0}{1}{A\in\clC}D       & = \lim(D),   \\
		\pqEnd{1}{0}{A\in\clC}D^{\op} & = \colim(D).
	\end{align*}
\end{example}
\begin{example}[$(1,1)$-co/ends]
	Let $(p,q)=(1,1)$ and consider a diagram in $\clD$ of the form $D\colon\clC^{\op}\times\clC\to \clD$. Again from the universal property of $(p,q)$\hyp{}ends, we see that $(1,1)$-ends are nothing but ordinary ends. That is:
	\[\pqEnd{1}{1}{A\in\clC}D^{A}_{A}=\int_{A\in\clC}D^{A}_{A}.\]
\end{example}
Furthermore, $(n,0)$-co/ends and $(0,n)$-co/ends are just suitable co/limits:
\begin{example}[$(2,0)$-, $(0,2)$-co/ends; $(n,0)$- and $(0,n)$-co/ends]
	Given a diagram $D\colon\CatFont{C}^{2}\to \CatFont{D}$, we have
	\[
		\pqEnd{0}{2}{A\in\clC}D^{A,A}   = \lim\left(D\circ\Delta_{\clC}\right),
		\qquad
		\pqCoend{0}{2}{A\in\clC}D^{A,A} = \colim\left(D\circ\Delta_{\clC}\right),
	\]
	where $\Delta_{\clC}\colon\clC\to \clC\times\clC$ is the diagonal functor of $\clC$ in the Cartesian monoidal structure of $\Cats$.

	A similar argument yields, for a diagram $D\colon\CatFont{C}^{n}\to \CatFont{D}$:
	\[
		\pqEnd{0}{n}{A\in\clC}D^{\bsA}   = \lim\left(D\circ\Delta^{n}_{\clC}\right),
		\qquad
		\pqCoend{0}{n}{A\in\clC}D^{\bsA} = \colim\left(D\circ\Delta^{n}_{\clC}\right).
	\]%
\end{example}
We consider next the first nontrivial example:
\begin{example}[$(2,1)$- and $(1,2)$-co/ends]
	Given a functor $T : \clC^{-2}\times \clC\to \clD$ let's flesh out what a $(2,1)$-wedge is: it consists of an object $X$ endowed with maps
	\[
		\begin{tikzcd}
			\omega_A : X \ar[r] & T(AA;A)
		\end{tikzcd}
	\]
	with the property that, for every $f : A \to B$ in $\clC$, the square
	\[
		\begin{tikzcd}
			X \ar[r, "\omega"]\ar[d,"\omega"']& T(AA;A) \ar[d, "T(11f)"]\\
			T(BB;B)\ar[r, "T(ff1)"'] & T(AA;B)
		\end{tikzcd}
	\]
	The $(2,1)$-end of $T$ is the terminal object in the category $\catWd_{(2,1)}(T)$ of wedges for $T$.

	As a particular example, let $\clC$ be a Cartesian category. Let us consider the functor
	\[\begin{tikzcd}[row sep=0cm]
			T = \Hom(\__1 \times \__2, \__3) : \clC^\op\times \clC^\op \times \clC \ar[r] & \Set \\
			(A,B;C) \ar[r,mapsto]& \clC(A\times B,C)
		\end{tikzcd}\]
	What is a $(2,1)$-wedge for $T$? It consists of a set $X$, and a family of functions $\omega_A : X \to \clC(A\times A,A)$ with the property that for each $f : A \to B$, the square
	\[
		\begin{tikzcd}
			X \ar[r]\ar[d]& \clC(A\times A,A) \ar[d, "{f_*}"]\\
			\clC(B\times B,B) \ar[r, "{(f\times f)^*}"']& \clC(A\times A,B)
		\end{tikzcd}
	\]
	commutes. In other words, each $\omega_A(x)$ is a morphism $A\times A\to A$ in $\clC$ with the property that each $f : A \to B$ is a ``homomorphism'' with respect to $\omega_A(x), \omega_B(x)$:
	\[
		\begin{tikzcd}
			A \times A\ar[d, "f\times f"']\ar[r, "\omega_A(x)"] & A \ar[d, "f"]\\
			B \times B\ar[r, "\omega_B(x)"'] & B
		\end{tikzcd}
	\]
	This structure is easy to determine: let $b : 1 \to B$ be a point of $A$ (e.g., let $\clC=\Set$). Then the commutativity of
	\[
		\begin{tikzcd}
			1\ar[d, "f\times f"']\ar[r, "\omega_A(x)"] & 1 \ar[d, "f"]\\
			B \times B\ar[r, "\omega_B(x)"'] & B
		\end{tikzcd}
	\]
	tells that $\omega_B(x) : B \times B \to B$ is a section of the diagonal $\Delta_B$ (this means: $\omega_B(x)(b,b)=b$ for every $b\in B$). Moreover, the family $\omega_A : A \times A \to A$ is natural in $A$, i.e.\ it is a natural transformation
	\[ \tk{ \times \circ \Delta \ar[r, "\omega"] \& \id } \]
	that is a section of the natural transformation in the opposite direction, unit of the adjunction constant-product.

	There are few such transformations. First, observe that the functor $\times \circ \Delta$ coincides with the functor $X \mapsto X^2$, so corresponds to the cotensoring with $2=\{0,1\}$ in an abstract category, and it is just the corepresentable presheaf on $2$ in the category of sets. Similarly, the identity is the corepresentable over the point in the category of sets. All in all, in the category of sets
	\begin{align*}
		\pqEnd{2}{1}{A}\Hom(A\times A,A) & \cong [\Set,\Set](\Set(2,\_), \Set(1,\_)) \\
		                                 & \cong \Set(1,2) \cong 2
	\end{align*}
	by the Yoneda lemma.

	Similarly, in a category $\clC$ with $\Set$-cotensors,
	\[\pqEnd{2}{1}{A}\clC(A\times A,A)\cong [\clC,\Set]((2\pitchfork\_), (1\pitchfork\_))\]
	where the functor $(n\pitchfork \_)$ coincides with the $n$-fold iterated product $X \mapsto X^n =X\times\dots\times X$. A similar argument shows that $\pqEnd{n}{1}{A}\clC(A^n,A) \cong [\clC,\Set]((n\pitchfork\_), (1\pitchfork\_))$.
\end{example}
\begin{example}[Dinatural transformations via $(2,2)$-ends]\label{dinatural-transformations-as-a-2-2-end}
	This example was first discovered by Street and Dubuc in \cite[Theorem 1]{dinatural-transformations}. We give an account of it in our language.

	Let $F,G\colon\clC^{\op}\times\clC\to \clD$ be functors. Then
	\begin{align*}
		\DiNat(F,G) & \cong \int_{A\in\clC}\hom_{\clD}\left(F^{A}_{A},G^{A}_{A}\right),        \\
		            & \cong \pqEnd{2}{2}{A\in\clC}\hom_{\clD}\left(F^{A}_{A},G^{A}_{A}\right).
	\end{align*}
\end{example}
\begin{proof}%
	The proof of the first isomorphism is divided in two steps:
	\begin{enumtag}{dep}
		\item First, consider the functor
		\[\hom_{\clD}\left(F^{-_{2}}_{-_{1}},G^{-_{1}}_{-_{2}}\right)\colon\clC^{\op}\times\clC\to \Sets\]
		sending
		\begin{enumerate}
            \item An object $\left(A,X\right)$ of $\clC^{\op}\times\clC$ to the set $\Hom_{\clD}\left(F^{X}_{A},G^{A}_{X}\right)$;
			\item A morphism $\Big(A\xrightarrow{f}B,X\xrightarrow{g}Y\Big)$ of $\clC^{\op}\times\clC$ to the map
			      \[\clD\left(F^{g}_{f},G^{f}_{g}\right)\colon\clD\left(F^{X}_{A},G^{A}_{X}\right)\to \clD\left(F^{Y}_{B},G^{B}_{Y}\right)\]
			      defined as the composition
			      \[
				      \begin{tikzcd}[row sep={9.0em,between origins}, column sep={9.0em,between origins},  ampersand replacement=\&]
					      {\clD\big(F^{X}_{A},G^{A}_{X}\big)}
					      \arrow[r, "\scriptscriptstyle{\clD\big(F^{X}_{A},G^{A}_{f}\big)}"']
					      \arrow[rrrr, to path={%
								      |- ([yshift=2.0ex]\tikztotarget.north)node[near end,above]{$\scriptstyle\clD\big(F^{g}_{f},G^{f}_{g}\big)$}
								      -- (\tikztotarget)}]
					      \&
					      {\clD\big(F^{X}_{A},G^{A}_{Y}\big)}
					      \arrow[r, "\scriptscriptstyle{\clD\big(F^{f}_{A},G^{A}_{Y}\big)}"']
					      \&
					      {\clD\big(F^{Y}_{A},G^{A}_{Y}\big)}
					      \arrow[r, "\scriptscriptstyle{\clD\big(F^{Y}_{A},G^{f}_{Y}\big)}"']
					      \&
					      {\clD\big(F^{Y}_{A},G^{B}_{Y}\big)}
					      \arrow[r, "\scriptscriptstyle{\clD\big(F^{Y}_{f},G^{B}_{Y}\big)}"']
					      \&
					      {\clD\big(F^{Y}_{B},G^{B}_{Y}\big)}\mathrlap{.}
				      \end{tikzcd}
			      \]
		\end{enumerate}
		By functoriality of $\Hom$s, the assignment $(A,X)\mapsto\clD\big(F^{X}_{A},G^{A}_{X}\big)$ preserves identities and composition, defining therefore a functor.
		\item Second, we compute the end $\int_{A\in\clC}\hom_{\clD}\left(F^{A}_{A},G^{A}_{A}\right)$; this is given by the equaliser of the pair of maps
		\[
			\begin{tikzcd}[row sep={13.5em,between origins}, column sep={13.5em,between origins},  ampersand replacement=\&]
				\prod_{A\in\clC_o}\clD\big(F^{A}_{A},G^{A}_{A}\big)
				\arrow[r,"\lambda",shift left=1.0]
				\arrow[r,"\rho"', shift right=1.0]
				\&
				\mspace{5.0mu}
				\prod_{f :A\to B}
				\Set\big(\clC(A,B),\clD\big(F^{B}_{A},G^{A}_{B}\big)\big)
			\end{tikzcd}%
		\]
		where $\lambda$ and $\rho$ are the morphisms induced by the universal property of the product by the morphisms
		\begin{align*}
			\lambda_{A,B} & \colon \textstyle \prod_A \clD\big(F^{A}_{A},G^{A}_{A}\big)\to \Set\big( \big(\clC\big(A,B\big),\clD\big(F^{B}_{A},G^{A}_{B}\big)\big), \\
			\rho_{A,B}    & \colon \textstyle \prod_A \clD\big(F^{A}_{A},G^{A}_{A}\big)\to \Set\big( \big(\clC\big(A,B\big),\clD\big(F^{B}_{A},G^{A}_{B}\big)\big)
		\end{align*}
		acting on elements as
		\begin{align*}
			\left(\alpha_{A}\colon F^{A}_{A}\to  G^{A}_{A}\right) & \mapsto \left(\var[f]{A}{B}\mapsto\left(G^{\id_{A}}_{f}\circ\alpha_{A}\circ F^{f}_{\id_{A}}\right)\right), \\
			\left(\alpha_{B}\colon F^{B}_{B}\to  G^{B}_{B}\right) & \mapsto \left(\var[f]{A}{B}\mapsto\left(G^{f}_{\id_{B}}\circ\alpha_{B}\circ F^{\id_{B}}_{f}\right)\right),
		\end{align*}
		and hence asking for $\lambda$ and $\rho$ to be equal is precisely the dinaturality condition for a family
		\[\big\{\alpha_{A}\colon F^{A}_{A}\to  G^{A}_{A}\big\}_{A\in\clC_o}\]
		of morphisms of $\clD$. As an element of the end $\int_{A\in\clC}\clD\left(F^{A}_{A},G^{A}_{A}\right)$ is precisely such a family equalising $\lambda$ and $\rho$, the result follows.
	\end{enumtag}
	As for the second isomorphism, we define a functor
	\[\clD(F_\uparrow,G_\downarrow)\colon\pq{\clC}[2][2]\to \Sets\]
	in a similar manner as we did above and then invoke \cref{p-q-ends-as-ordinary-ends} of \cref{prop:properties-of-p-q-ends}. The universal property of the $(2,2)$-end of $\clD(F_\uparrow,G_\downarrow)$ is the same as the universal property of the equaliser defining $\DiNat(F,G)$.
\end{proof}
Generalising \cref{dinatural-transformations-as-a-2-2-end}, we have the following.
\begin{example}[$(p,q)$\hyp{}Dinatural transformations via $(q,p)$-ends]\label{p-q-as-dinaturals}%
	Let $F$ and $G$ be functors of type $\typepq{p}{q}$ and $\typepq{q}{p}$, respectively. Then
	\[
		\pqDiNat{p}{q}(F,G) \cong \pqEnd{q}{p}{A\in\clC}\Hom_{\clD}\left(F^{\bsA}_{\bsA},G^{\bsA}_{\bsA}\right),
	\]
	where the \say{integrand} is the functor
	\begin{diagram*}
		\begin{tikzcd}[row sep=0.0em, column sep=2.7em,  ampersand replacement=\&]
			\pq{\clC}[q][p]
			\arrow[r]
			\&
			\clD
			\\
			(\uA,\uB)
			\arrow[r, mapsto]
			\&
			\Hom_{\clD}\left(F^{B_{1},\ldots,B_{p}}_{A_{1},\ldots,A_{q}},G^{A_{1},\ldots,A_{q}}_{B_{1},\ldots,B_{p}}\right)\mrp{.}
		\end{tikzcd}
	\end{diagram*}%
\end{example}
\begin{proof}%
	This is a combination of \cref{higher-arity-dinaturality-via-ordinary-dinaturality}, \cref{dinatural-transformations-as-a-2-2-end}, and \cref{p-q-ends-as-ordinary-ends} of \cref{prop:properties-of-p-q-ends}.
\end{proof}
\subsection{Classes of higher arity coends}
\subsubsection{A glance at weighted co/ends}\label{glance_at_weightends}
We now introduce a natural factory of examples for higher arity co/ends. In a nutshell, weighted co/ends are to co/ends as weighted co/limits are to co/limits.
\begin{definition}[Weighted co/ends]\label{def:weighted-co-ends}
    Let $\clC$ and $\clD$ be $\clV$-enriched categories and $D\colon\clC^{\op}\otimesV\clC\to\clD$ a $\clV$-functor, and $W : \CatFont{C}^{\op}\otimesV\CatFont{C} \to \clV$ a $\clV$-presheaf.
	\begin{enumtag}{we}
		\item The \emph{end of $D$ weighted by $W$} is, if it exists, the object $\smash{\WeightedEnd{A\in\clC}{W}D^{A}_{A}}$ of $\clD$ with the property that
		\[\hom_{\CatFont{D}}\Big(-,{\WeightedEnd{A\in\clC}{W}D^{A}_{A}}\Big)\cong\VDiNat{\clV}(W,\eHom_{\clC}(-,D)).\]
		\item The \emph{coend of $D$ weighted by $W$} is, if it exists, the object $\smash{\WeightedCoend{A\in\clC}{W}D^{A}_{A}}$ of $\clD$ with the property that
		\[\hom_{\CatFont{D}}\Big({\WeightedCoend{A\in\clC}{W}D^{A}_{A}},-\Big)\cong\VDiNat{\clV}(W,\eHom_{\clC}(D,-)).\]
	\end{enumtag}
\end{definition}
\begin{example}[Weighted co/ends are $(2,2)$-co/ends]
    For $\CatFont{V}$ a co/tensored monoidal category, there are $(2,2)$-co/end formulas for weighted co/ends:
	\begin{align*}
		\wEnd[W]{A\in\clC}D^{A}_{A}   &\cong \pqEnd{2}{2}{A\in\clC}W^{A}_{A}\pitchfork D^{A}_{A}, \\
		\wCoend[W]{A\in\clC}D^{A}_{A} &\cong \pqCoend{2}{2}{A\in\clC}W^{A}_{A}\odot D^{A}_{A}.
	\end{align*}
\end{example}
\begin{example}[Weights increase arity]
	Let $F,G\colon\clC\to \CatFont{D}$ and $W\colon\clC^{\op}\otimesV\clC\to \CatFont{V}$ be $\CatFont{V}$-functors. In analogy with
	\[
		\VNat{\CatFont{V}}(F,G)
		\defeq
		\int_{A\in\clC}\eHom_{\clD}(F_{A},G_{A}),
	\]
	we define the \emph{object $\wNat{W}(F,G)$ of natural transformations from $F$ to $G$ weighted by $W$} by
	\begin{equation}\label{object-wnat-w-f-g-of-natural-transformations-from-f-to-g-weighted-by-w}
		\wNat{W}(F,G)
		\defeq
		\wEnd[W]{A\in\clC}\eHom_{\clD}(F_{A},G_{A}).
	\end{equation}
	Taking $W$ to be mute in its contravariant variable, we can give a reformulation of the universal property of weighted limits:
	\[
		\ph\left(-,\wlim{W}(D)\right)
		\cong
		\wNat{W}\left(\Delta_{(-)},D\right).
	\]
	Defining $\wVDiNat{W}{\CatFont{V}}(F,G)$ by a similar formula, we also obtain the following isomorphism in the case of weighted ends:
	\[
		\ph\left(-,\wEnd[W]{A\in\clC}D^{A}_{A}\right)
		\cong
		\wVDiNat{W}{\CatFont{V}}\left(\Delta_{(-)},D\right).
	\]
	This naturally suggests a definition of \say{doubly-weighted ends}:
	\[
		\ph\left(-,\wEnd[W_{1},W_{2}]{A\in\clC}D^{A}_{A}\right)
		\cong
		\wVDiNat{W_{1}}{\CatFont{V}}(W_{2},D).
	\]
    Proceeding inductively leads to the notion of an end weighted by a collection of functors $\{W_{1},\ldots,W_{n}\}$. These ``$n$-weighted ends'' however, can actually be computed as $(n+1,n+1)$-ends:%
	\[
		\int_{A\in\clC}^{[W_{1},\ldots,W_{n}]}D^{A}_{A}
		\cong
		\pqEnd{n+1}{n+1}{A\in\clC}\left((W_{1})^{A}_{A}\times\cdots\times(W_{n})^{A}_{A}\right)\odot D^{A}_{A}.
	\]
	As such, we see that weighting an end increases its arity by $(1,1)$.
\end{example}
\subsubsection{Weighted Kan extensions}\label{weikan}
Another source of examples comes from ``weighing'' left and right Kan extensions. While the most general such weight is a profunctor, having type $\typepq{1}{1}$, weights of type $\typepq{1}{0}$ or $\typepq{0}{1}$ are specially interesting, as they give a more direct parallel to the classical theory of weighted co/limits (see \cref{ex:weighted-co-limits-weighted-kan-extensions}).

For \cref{left-weikan,right-weikan} below, recall from \cref{object-wnat-w-f-g-of-natural-transformations-from-f-to-g-weighted-by-w} the definition of the object $\wNat{W}(F,G)$ of weighted natural transformations.
\begin{definition}\label{left-weikan}
	The \emph{left Kan extension of $F$ along $K$ weighted by $W$} is, if it exists, the $\clV$-functor
	\[
		\left(\wLan{W}_{K}F\colon\clD\to \CatFont{E}\right)\colon
		\begin{tikzcd}[row sep={3.6em,between origins}, column sep={3.6em,between origins}, ampersand replacement=\&]
			{}
			\&
			\clD
			\arrow[d, "\wLan{W}_{K}F", dashed]
			\\
			\clC
			\arrow[ru, "K"]
			\arrow[l, "W",mid vert,loop left]
			\arrow[r, "F"'{name=F}]
			\&
			\CatFont{E}\mrp{,}
			\arrow[from=F,to=1-2,shorten=1.125em,Rightarrow,xshift=-0.125em,yshift=-0.25em]
		\end{tikzcd}
	\]
	for which we have a $\CatFont{V}$-natural isomorphism
	\begin{equation}\label{weighted-left-kan-extension-defining-equation}
		\VNat{\clV}\left(\wLan{W}_{K}F,G\right)\cong\wVNat{W}{\clV}\left(F,G\circ K\right),
	\end{equation}
	natural in $G$.
\end{definition}
One defines weighted right Kan extensions in a dual manner:
\begin{definition}\label{right-weikan}
	The \emph{right Kan extension of $F$ along $K$ weighted by $W$} is, if it exists, the $\clV$-functor
	\[
		\left(\wRan{W}_{K}F\colon\clD\to \CatFont{E}\right)\colon
		\begin{tikzcd}[row sep={3.6em,between origins}, column sep={3.6em,between origins}, ampersand replacement=\&]
			{}
			\&
			\clD
			\arrow[d, "\wRan{W}_{K}F", dashed]
			\\
			\clC
			\arrow[ru, "K"]
			\arrow[l, "W",mid vert,loop left]
			\arrow[r, "F"'{name=F}]
			\&
			\CatFont{E}\mrp{,}
			\arrow[from=F,to=1-2,shorten=1.125em,Leftarrow,xshift=-0.125em,yshift=-0.25em]
		\end{tikzcd}
	\]
	for which we have a $\CatFont{V}$-natural isomorphism
	\begin{equation}\label{weighted-right-kan-extension-defining-equation}
		\VNat{\clV}\left(G,\wRan{W}_{K}F\right)\cong\wVNat{W}{\clV}\left(G\circ K,F\right),
	\end{equation}
	natural in $G$.
\end{definition}
\begin{example}[Weighted co/limits as weighted Kan extensions]\label{ex:weighted-co-limits-weighted-kan-extensions}
	Let $D\colon\clC\to \CatFont{D}$ be a diagram on a category $\CatFont{D}$. Then we may canonically identify the left Kan extension of $D$ along the terminal functor with its colimit:
	\[
		\Lan_{!}D\cong\ceil{\colim(D)}
		\quad
		\begin{tikzcd}[row sep={3.6em,between origins}, column sep={3.6em,between origins}, ampersand replacement=\&]
			{}
			\&
			\catpt
			\arrow[d, "\ceil{\colim(D)}", dashed]
			\\
			\clC
			\arrow[ru, "!"]
			\arrow[r, "D"'{name=F}]
			\&
			\CatFont{D}\mrp{.}
			\arrow[from=F,to=1-2,shorten=1.0em,Rightarrow,xshift=-0.125em,yshift=-0.25em]
		\end{tikzcd}
	\]
	Similarly, given a weight $W\colon\clC^{\op}\to \Sets$, we have
	\[
		\wLan{W}_{!}D\cong\ceil{\wcolim{W}(D)}
		\quad
		\begin{tikzcd}[row sep={3.6em,between origins}, column sep={3.6em,between origins}, ampersand replacement=\&]
			{}
			\&
			\catpt
			\arrow[d, "\ceil{\wcolim{W}(D)}", dashed]
			\\
			\clC
			\arrow[ru, "!"]
			\arrow[l, "W",mid vert,loop left]
			\arrow[r, "D"'{name=F}]
			\&
			\CatFont{D}\mrp{.}
			\arrow[from=F,to=1-2,shorten=1.0em,Rightarrow,xshift=-0.125em,yshift=-0.25em]
		\end{tikzcd}
	\]
\end{example}
Weighted Kan extensions may also be expressed as $(p,q)$-co/ends:
\begin{align}
	\wLan{W}_{K}F & \cong \wCoend[W]{A\in\clC}\eHom_{\clC}\left(K_{A},-\right)     \odot     F_{A} \cong \pqCoend{2}{2}{A\in\clC}\left(W^{A}_{A}\times\eHom_{\clC}(K_{A},-)\right)\odot      F_{A}, \label{eq:weighted-kan-extension-formula-left} \\
	\wRan{W}_{K}F & \cong \wEnd[W]{A\in\clC}  \eHom_{\clC}\left(-,K_{A}\right)\pitchfork F_{A} \cong \pqEnd{2}{2}{A\in\clC}  \left(W^{A}_{A}\times\eHom_{\clC}(-,K_{A})\right)\pitchfork F_{A}.\label{eq:weighted-kan-extension-formula-right}
\end{align}
Equipped with this description, we now proceed to compute a few weighted Kan extensions.
\begin{example}
	Consider the functor $\inj^\op : \pt^\op \to \Delta^\op$. The left and right Kan extensions of a set $X_{\bullet}\colon\catpt\to \Sets$ along $\inj^{\op}$ are given by
	\begin{align*}
		\Lan_{\inj^{\op}}(X) & \cong \underline{X}_{\bullet} \\
		\Ran_{\inj^{\op}}(X) & \cong \Cech(X).
	\end{align*}
    Passing from ordinary Kan extensions to weighted ones and hence picking a weight $W\colon\catpt^{\op}\times\catpt\to\Sets$ (whose image in $\Sets$ we also denote by $W$), we obtain
	\[
        \begin{aligned}
            \wLan{W}_{\inj^{\op}}(X) &\cong \underline{W\times X}_{\bullet}, \\
            \wRan{W}_{\inj^{\op}}(X) &\cong \Cech(W\times X),
        \end{aligned}
        \qquad
		\begin{tikzcd}[row sep={4.5em,between origins}, column sep={4.5em,between origins}, ampersand replacement=\&]
			{}
			\&
			\SimplexCategory^\op
            \arrow[d, "\wLan{W}_{\inj^{\op}}(X)", dashed]
			\\
			\catpt^{\op}
			\arrow[ru, "\inj^{\op}"]
			\arrow[l, "W",mid vert,loop left]
			\arrow[r, "X"'{name=F}]
			\&
			\Sets\mrp{.}
			\arrow[from=F,to=1-2,shorten=1.125em,Rightarrow,xshift=-0.125em,yshift=-0.25em]
		\end{tikzcd}
	\]
\end{example}
\begin{example}
	The above example has a more interesting counterpart, in which we consider the left adjoint
	\[
		\begin{tikzcd}[row sep=0.0em, column sep=2.7em,  ampersand replacement=\&]
			\mathllap{\pr^{\op}\colon}\SimplexCategory^{\op}
			\arrow[r]
			\&
			\catpt^{\op}
			\\
			{[n]}
			\arrow[r, mapsto]
			\&
			{\star}
		\end{tikzcd}
	\]%
    of $\inj^{\op}$. The left and right Kan extensions of a simplicial set $X_{\bullet}\colon\SimplexCategory^\op\to\Sets$ along $\pr^{\op}$ are given by
	\begin{align*}
		\Lan_{\pr^{\op}}(X_{\bullet}) & \cong \pi_{0}(X_{\bullet})               \\
		\Ran_{\pr^{\op}}(X_{\bullet}) & \cong \ev_{0}(X_{\bullet}) \defeq X_{0}.
	\end{align*}
    We now have a much wider range of choices for the weight $W$: we may choose it to be any cosimplicial space $W^{\bullet}_{\bullet}\colon\SimplexCategory^{\op}\times\SimplexCategory\to\Sets$:
	\[
		\begin{tikzcd}[row sep={4.5em,between origins}, column sep={4.5em,between origins}, ampersand replacement=\&]
			{}
			\&
			\catpt^{\op}
			\arrow[d, "\wLan{W}_{\pr^{\op}}X_{\bullet}", dashed]
			\\
			\SimplexCategory^\op
			\arrow[ru, "\pr^{\op}"]
			\arrow[l, "W",mid vert,loop left]
			\arrow[r, "X_{\bullet}"'{name=F}]
			\&
			\Sets\mrp{.}
			\arrow[from=F,to=1-2,shorten=1.125em,Rightarrow,xshift=-0.125em,yshift=-0.25em]
		\end{tikzcd}
	\]
    For instance, taking $W=\Delta^{\bullet}$ almost gives the geometric realisation of $X_{\bullet}$:
    \[
        \wLan{\Delta^{\bullet}}_{\pr^{\op}}(X_{\bullet}) \cong \int^{[n]\in\SimplexCategory}\Delta^{n}\times X_{n},
    \]
    with the caveat that the geometric realisation involves $\lvert\Delta^{n}\rvert$, rather than $\Delta^{n}$ itself. Dually, taking again $W=\Delta^{\bullet}$ but now for a cosimplicial object $X^{\bullet}\colon\SimplexCategory\to \Sets$, we have
    \[
        \wRan{\Delta^{\bullet}}_{\pr}(X^{\bullet}) = \Tot(X_{\bullet}).
    \]
\end{example}
\begin{example}[Stalks of a sheaf {(\cite[Paragraph 6.8 and Section 7.1]{sgaiv})}]
	Let $i_{p}\colon\{p\}\longhookrightarrow X$ be the inclusion of a point into a topological space $X$. We get an induced functor
	\[
		\begin{tikzcd}[row sep=0.0em, column sep=2.7em,  ampersand replacement=\&]
			\mathllap{\Open(i_{p})\colon}\Open(X)
			\arrow[r]
			\&
			\Open(\{p\})
			\\
			U
			\arrow[r, mapsto]
			\&
			i_{p}^{-1}(U)\mrp{.}
		\end{tikzcd}
	\]%
	Considering now left Kan extensions along the opposite of $\Open(i_{p})$,
	\[
		\begin{tikzcd}[ampersand replacement=\&]
			{}
			\&
			\Open(\{p\})^{\op}
			\arrow[d, "\Lan_{\Open(i_{p})^{\op}}\SheafFont{F}", dashed]
			\\
			\Open(X)^{\op}
			\arrow[ru, "\Open(i_{p})^{\op}"]
			\arrow[r, "\SheafFont{F}"'{name=F}]
			\&
			\Sets\mrp{,}
			\arrow[from=F,to=1-2,shorten=1em,Rightarrow,xshift=-0.125em,yshift=-0.25em]
		\end{tikzcd}
	\]
	we obtain a functor $\Lan_{\Open(i_{p})^{\op}}\colon\PSh{X}\to \PSh{\{p\}}$, whose image at $\SheafFont{F}$ we write $\ceil{\SheafFont{F}_{p}}$ for simplicity. The restriction of this functor to $\Shv{X}$ can be identified with the stalk functor $(-)_{p}\colon\Shv{X}\to \Sets$: we have $\Open(\{p\})=\{\varnothing\longhookrightarrow\{p\}\}$ and computing the images of $\varnothing$ and $\{p\}$ under $\ceil{\SheafFont{F}_{p}}$ via the usual colimit formula for left Kan extensions gives
	\begin{align*}
		\ceil{\SheafFont{F}_{p}}(\{p\})       & \cong \colim\left(\left(\Open(\ceil{p})\downarrow\underline{\{p\}}\right)^{\op}\xlongertwoheadrightarrow{\projection^{\op}}\Open(X)^{\op}\xto {\SheafFont{F}}\Set\right),       \\
		                                      & \cong \colim_{U\ni p}(\SheafFont{F}(U)),                                                                                                                                        \\
		                                      & \cong \SheafFont{F}_{p}                                                                                                                                                         \\
		\ceil{\SheafFont{F}_{p}}(\varnothing) & \cong \colim\left(\left(\Open(\ceil{p})\downarrow\underline{\varnothing}\right)^{\op}\xlongertwoheadrightarrow{\projection^{\op}}\Open(X)^{\op}\xto {\SheafFont{F}}\Set\right), \\
		                                      & \cong \colim_{U\hookrightarrow\varnothing}(\SheafFont{F}(U)),                                                                                                                   \\
		                                      & \cong \SheafFont{F}(\varnothing).
	\end{align*}
	(in case $\SheafFont{F}$ is a sheaf, $\SheafFont{F}(\varnothing)$ is the singleton set.) Consider the same situation, but now with a weight $W\colon\Open(X)\times\Open(X)^{\op}\to \Sets$:
	\[
		\begin{tikzcd}[row sep={4.5em,between origins}, column sep={4.5em,between origins}, ampersand replacement=\&]
			{}
			\&
			\Open(\{p\})^{\op}
			\arrow[d, "\wLan{W}_{\Open(i_{p})^{\op}}\SheafFont{F}", dashed]
			\\
			\Open(X)^{\op}
			\arrow[ru, "\Open(i_{p})^{\op}"]
			\arrow[l, "W",mid vert,loop left]
			\arrow[r, "\SheafFont{F}"'{name=F}]
			\&
			\Sets\mrp{.}
			\arrow[from=F,to=1-2,shorten=1.125em,Rightarrow,xshift=-0.125em,yshift=-0.25em]
		\end{tikzcd}
	\]
	Using \cref{eq:weighted-kan-extension-formula-left}, we may compute $\wLan{W}_{\Open(i_{p})^{\op}}\SheafFont{F}\defeq\ceil{\SheafFont{F}_{p}^{[W]}}$ as the weighted coend
	\begin{align*}
		\ceil{\SheafFont{F}^{[W]}_{p}} & \defeq \wCoend[W]{U\in\Open(X)}\Hom_{\Open(X)^{\op}}\big(\Open\big(i_{p}^{\op}\big)(U),-\big)\odot\SheafFont{F}(U) \\
		                               & \cong  \wCoend{U\in\Open(X)}W^{U}_{U}\times\Hom_{\Open(X)}\big(\chi_{p}(U),-)\big)\times\SheafFont{F}(U),
	\end{align*}
	where
	\[
		\chi_{p}(U)
		=
		\begin{cases}
			\emptyset & \text{if $p\nin U$,} \\
			U         & \text{otherwise.}
		\end{cases}
	\]
	For instance, taking $W$ to be a sheaf $\SheafFont{G}$ on $X$ gives
	\[
		\SheafFont{F}^{[\SheafFont{G}]}_{p} \defeq \ceil{\SheafFont{F}^{[\SheafFont{G}]}_{p}}(\{p\}) \cong \big(\SheafFont{F}\times\SheafFont{G}\big)_{p}.
	\]
\end{example}
\subsubsection{A glance at diagonality}\label{glance_at_extradiag}
In a nutshell, ``diagonal'' category theory arises when, instead of considering a natural transformation filling a higher-dimensional cell, we consider a \emph{dinatural} one. Transformations that are more general than natural ones notoriously do not compose (see \cite{kelly1972abstract,kelly1972many} and mostly \cite{AlessioThesis} for a modern account); yet, the category theory arising from this generalisation is interesting.

For the purposes of our exposition here, left/right Kan extensions are the most interesting categorical gadget to ``diagonalise''; when this is done, they provide examples of higher arity co/ends.
\begin{definition}\label{def:diagonal-left-kan-extensions}%
    The \emph{diagonal left Kan extension} of a functor $F\colon\clC^{\op}\times\clC\to \CatFont{D}$ along a functor $K\colon\clC^{\op}\times\clC\to \CatFont{D}$ is, if it exists, the functor $\DiLan_{K}F\colon\CatFont{D}\to \CatFont{E}$ such that we have an isomorphism
	\[
		\Nat(\DiLan_{K}F,G)
		\cong
		\DiNat(F,G\circ K)\mrp{,}
		\qquad
		\begin{tikzcd}[row sep={5.4em,between origins}, column sep={5.4em,between origins}, ampersand replacement=\&]
			{}
			\&
			\CatFont{D}
			\arrow[d, "\DiLan_{K}F", dashed]
			\\
			\clC^{\op}\times\clC
			\arrow[ru, "K"]
			\arrow[r, "F"'{name=F}]
			\&
			\clE
			\arrow[from=F,to=1-2,din=0.70,shorten <=1.2em,shorten >=1.2em+0.3em]
		\end{tikzcd}
	\]
	natural in $G$.
\end{definition}
\begin{example}[Ends as diagonal left Kan extensions]\label{ex:coends-dilan}
    Classically, the left Kan extension of a functor $D\colon\clC\to\CatFont{D}$ along the terminal functor $\pr\colon\clC\longtwoheadsrightarrow\catpt$ may be canonically identified with its colimit. Passing to diagonal category theory, one obtains a similar result: the diagonal left Kan extension of a functor $D\colon\clC^{\op}\times\clC\to\CatFont{D}$ along $\pr\colon\clC^{\op}\times\clC\longtwoheadsrightarrow\catpt$ may similarly be canonically identified with its coend.
	\[
		\begin{tikzcd}[row sep={4.5em,between origins}, column sep={4.5em,between origins}, ampersand replacement=\&,cramped]
			{}
			\&
			\catpt
			\arrow[d, "\ceil{\colim(D)}", dashed]
			\\
			\clC
			\arrow[ru, "\pr"]
			\arrow[r, "D"'{name=F}]
			\&
			\clD\mrp{,}
			\arrow[from=F,to=1-2,Rightarrow,shorten=1.25em,xshift=0.1em]
		\end{tikzcd}
		\begin{tikzcd}[row sep={4.5em,between origins}, column sep={4.5em,between origins}, ampersand replacement=\&,cramped]
			{}
			\&
			\catpt
			\arrow[d, "\ceil{\int^{A}D^{A}_{A}}", dashed]
			\\
			\clC^{\op}\times\clC
			\arrow[ru, "\pr"]
			\arrow[r, "D"'{name=F}]
			\&
			\clD\mrp{.}
			\arrow[from=F,to=1-2,din=0.65,shorten <=1.0em,shorten >=1.0em+0.3em,xshift=-0.1em]
		\end{tikzcd}
	\]
\end{example}
\begin{remark}%
    While ordinary Kan extensions may be computed as co/ends, diagonal Kan extensions admit $(2,2)$-co/end formulas:%
    \footnote{%
        Upon inspection, one observes that there is a striking similarity between \cref{eq:dilankf,eq:dirankf} and \cref{eq:weighted-kan-extension-formula-left,eq:weighted-kan-extension-formula-right}. This is not a coincidence, as it is possible to show that diagonal Kan extensions are precisely $\Hom$-weighted Kan extensions):
        \begin{align*}
            \DiLan_{K}F & \cong \wCoend[\Hom_{\clC}(-,-)]{A,B\in\clC}\clD\left(K^{B}_{A},-\right)\odot      F^{A}_{B}, \\
            \DiRan_{K}F & \cong \wEnd[\Hom_{\clC}(-,-)]{A,B\in\clC}  \clD\left(-,K^{B}_{A}\right)\pitchfork F^{A}_{B}.
        \end{align*}
        This is both an analogy as well as a generalisation (\cref{ex:coends-dilan}) of the fact that co/ends are precisely $\Hom$-weighted co/limits.
    }%
    \begin{align}
        \DiLan_{K}F & \cong \pqCoend{2}{2}{A\in\clC}\clD\left(K^{A}_{A},-\right)\odot F^{A}_{A},   \label{eq:dilankf} \\
        \DiRan_{K}F & \cong \pqEnd{2}{2}{A\in\clC}\clD\left(-,K^{A}_{A}\right)\pitchfork F^{A}_{A},\label{eq:dirankf}
    \end{align}
    where the pairing in \cref{eq:dilankf} is such that $\DiLan_{K}F$ is the coend of
    \[
        (A,B)\mapsto\clD\left(K^{B}_{A},-\right)\odot F^{A}_{B}.
    \]
\end{remark}
\begin{example}[Application to $(p,p)$-dinaturality]%
    As diagonal Kan extensions along identity functors satisfy
    \begin{align*}
        \Nat\Big(\DiLan_{\id_{\CatFont{C}^{(p,p)}}}F,G\Big) &\cong \pqDiNat{p}{p}(F,G),\\
        \Nat\Big(F,\DiRan_{\id_{\CatFont{C}^{(p,p)}}}G\Big) &\cong \pqDiNat{p}{p}(F,G),
    \end{align*}
    they provide us with a tool to study $(p,p)$\hyp{}dinaturality in terms of (ordinary) naturality. A variant of this construction allowing also the case $p\neq q$ will be studied in \cref{sec:higher-arity-yoneda}.
\end{example}
\subsubsection{Weighted diagonal Kan extensions}
In the same spirit, we may combine the two perspectives found in \cref{def:weighted-co-ends,def:diagonal-left-kan-extensions}, thus obtaining notions of \emph{weighted diagonal Kan extensions}. While this topic is outside the scope of the present paper, we note that these constructions turn out to be examples of $(4,4)$-co/ends:
\[
    \begin{aligned}
        \wDiLan{W}_{K}F &\cong \pqCoend{4}{4}{A\in\clC}\left(W^{A,A}_{A,A}\times\eHom_{\clC}\left(K^{A}_{A},-\right)\right)\odot      F^{A}_{A}, \\
        \wDiRan{W}_{K}F &\cong \pqEnd{4}{4}{A\in\clC}\left(W^{A,A}_{A,A}\times  \eHom_{\clC}\left(-,K^{A}_{A}\right)\right)\pitchfork F^{A}_{A}.
    \end{aligned}
    \qquad
	\begin{tikzcd}[row sep={5.4em,between origins}, column sep={5.4em,between origins}, ampersand replacement=\&]
		{}
		\&
		\CatFont{D}
		\arrow[d, "\DiLan_{K}F", dashed]
		\\
		\clC^{\op}\times\clC
		\arrow[l, "W",mid vert,loop left]
		\arrow[ru, "K"]
		\arrow[r, "F"'{name=F}]
		\&
		\clE\mrp{.}
		\arrow[from=F,to=1-2,din=0.65,shorten <=1.3em,shorten >=1.3em+0.3em]
	\end{tikzcd}
\]
At this point, it is evident that the list of examples is virtually endless. We plan to dedicate separate works \cite{extradiag,weighend} to a torough investigation of the topic.
\subsubsection{Daydreaming about operads}
Day convolution was introduced by B.\ Day in \cite{day:thesis,day:report}, in order to classify monoidal structures on the category $\PSh{\clC}$ of presheaves on $\clC$. Day proved that $\PSh{\clC}$ can be turned into a monoidal category in as many ways as $\clC$ can be turned into a pseudomonoid in the bicategory $\Prof$ of profunctors.\footnote{More formally, let $S : \Cat \to \Cat$ be the 2-monad of pseudomonoids; let $\tilde S : \Prof \to \Prof$ be the lifting of $S$ to the bicategory of profunctors (i.e.\ to the Kleisli bicategory of the presheaf construction $\PShf$); then, given an object $\clC$ of $\Cat$, there is a bijection between pseudo-$S$-algebra structures on $\PSh{\clC}$ and pseudo-$\tilde S$-algebras on $\clC$, as an object of $\Prof$.}

We now propose an analogue of this framework based on higher arity coends: let $(\clC,\otimes,\Unit)$ be a monoidal category, and let $\clK\defeq \PSh{\clC}$. Higher arity Day convolution is defined as a certain family of functors $\otimesDayN{n} : \clK^n\to \clK$.
\begin{definition}\label{nn_day_convolution}
	The \textbf{Day $(n,n)$-convolution} of an $n$-tuple of presheaves $\tpl{\SheafFont{F}}$ is the presheaf
	\[\otimesDayN{n}(\tpl{\SheafFont{F}})\colon\clC^{\op}\to \Sets\]
	defined at $A\in\clC_o$ as the $(n,n)$-coend
	\[\otimesDayN{n}(\tpl{\SheafFont{F}}) \defeq A \mapsto \pqCoend{n}{n}{A\in\clC}\SheafFont{F}_{1}(A)\times\cdots\times\SheafFont{F}_{n}(A)\times\clC\left(-,A^{\otimes n}\right),\]
	where $A^{\otimes n}$ is shorthand for the $n$-fold tensor product of $A$ with itself.
\end{definition}
\begin{example}[Day convolution operad]\label{the-day-higher-arity-convolution-operad}%
	The \textbf{Day convolution operad associated to $(\clC,\otimes,\Unit)$} is the symmetric operad $\DayOperad$ whose set of generating operations (see \cite[Section 1.2.5]{fresse-operads}) is given by $\{\id,\otimesDayN{2},\otimesDayN{3},\ldots,\otimesDayN{n},\ldots\}$.
\end{example}
\begin{remark}[Unwinding \cref{the-day-higher-arity-convolution-operad}]%
	We spell out in detail the first four sets of $n$-ary operations of $\DayOperad$:
	\begin{align*}
		\DayOperad_{1} & = \{\id\}                                                                                                                                                            \\
		\DayOperad_{2} & = \{\otimesDayN{2}(-,-)\}                                                                                                                                            \\
		\DayOperad_{3} & = \{\otimesDayN{3}(-,-,-),\otimesDayN{2}(\otimesDayN{2}(-,-),-),\otimesDayN{2}(-,\otimesDayN{2}(-,-))\}                                                              \\
		\DayOperad_{4} & = \{\otimesDayN{4}(-,-,-,-),\otimesDayN{2}(\otimesDayN{3}(-,-,-),-),\otimesDayN{2}(-,\otimesDayN{3}(-,-,-)),\otimesDayN{2}(\otimesDayN{2}(-,-),\otimesDayN{2}(-,-)), \\
                       & \hspace{16.0pt}{\otimesDayN{3}(\otimesDayN{2}(-,-),-,-)},\otimesDayN{3}(-,\otimesDayN{2}(-,-),-),\otimesDayN{3}(-,-,\otimesDayN{2}(-,-))\}
	\end{align*}
	All in all, the set $\mathsf{Day}_n$ can be succinctly described by the formula
    \[\mathsf{Day}_n = \{\otimesDayN{n}\} \cup \sum_{p_1+\dots + p_k=n} \otimesDayN{k} \circ (\mathsf{Day}_{p_1} \times \dots \times \mathsf{Day}_{p_k}),\]
    while the operadic composition of $\DayOperad$ is defined via ``grafting'' in the usual way:
	\begin{diagram*}
		\begin{tikzcd}[row sep=0.0em, column sep=2.7em,  ampersand replacement=\&]
			\DayOperad_{n}\times\DayOperad_{k_{1}}\times\cdots\times\DayOperad_{k_{n}}
			\arrow[r]
			\&
			\DayOperad_{\sum k_i}
			\\
			{(\theta;\theta_{1},\ldots,\theta_{k})}
			\arrow[r, mapsto]
			\&
			{\theta(\theta_{1}(-_{1},\ldots,-_{k_{1}}),\ldots,\theta_{k}(-_{1},\ldots,-_{k_{n}}))\mrp{.}}
		\end{tikzcd}
	\end{diagram*}%
\end{remark}
We refrain from going further in the analysis of the properties of the family of $n$-ary Day convolutions; it seems to the authors this constitutes a mildly interesting object, especially because as already said, convolution monoidal structures on a presheaf category $[\clA^\op,\Set]$ correspond to \emph{promonoidal} structures on $\clA$; similarly here, the full assignment of the $n$-ary Day convolutions gives rise to a family of profunctors 
\[ \fkp_n : \clA^n \pto \clA. \]
What are the properties of this family of profunctors? We leave this as an open question. Conjecturally, this family is akin to an `unbiased' version of promonoidal structure on $\clA$, and should determine some sort of correspondence \emph{à la} Day; yet, the Day $(2,2)$-convolution evidently is not the usual Day convolution on $\PSh{\clA}$, due to the difference between a $(2,2)$-coend and an iterated coend.
\section{Kusarigamas and twisted arrow categories}\label{sec:higher-arity-yoneda}
The aim of this section is to introduce and study a fundamental computational tool that will endow higher arity coends with a fairly rich calculus, serving as the next best substitute to a ``higher arity Yoneda lemma'', whose most naïve formulation turns out to be false. Generalising a construction of Street--Dubuc introduced in \cite[Theorem 2]{dinatural-transformations}, we introduce in \cref{def:co-kusarigama} functors
\begin{align*}
	\ckgfpq{p}{q} & : \Cat\big(\pq{\clC},\clD\big)\to \Cat\big(\pq{\clC}[q][p],\clD\big),   \\
	\kgfpq{q}{p}  & : \Cat\big(\clC^{(q,p)},\clD\big)\to \Cat\big({\clC}^{(p,q)},\clD\big),
\end{align*}
which we dub \emph{co/kusarigama}. These constructions also generalise the product-hom functor of \cref{prod_hom}, in the sense that
\[ \hom_{\Pi,p,q}(\uA,\uB)\cong\ckgpq{p}{q}{\pt}^{\uA}_{\uB}, \]
where $\pt$ is the terminal functor. Co/kusarigama allow us to pass from dinaturality to naturality, underpinning a number of results in the theory of higher arity co/ends, such as the construction of higher arity twisted arrow categories.

Overall, the entire structure of this section concentrates on studying the properties of the functors $\kgfpq{q}{p}$ and $\ckgfpq{p}{q}$, which may be regarded as
\begin{enumerate}
	\item Universal objects among $(p,q)$\hyp{}dinatural transformations, through which all other $(p,q)$\hyp{}dinaturals factor (\cref{def:co-kusarigama,rem:unwinding-co-kusarigama}):
	      \[
            \Nat\left(\ckgpq{p}{q}{F},G\right)
            \cong
            \pqDiNat{p}{q}\left(F,G\right)
            \cong
            \Nat\left(F,\kgpq{p}{q}{G}\right);
          \]
	\item Functors that can be inductively defined through suitable Kan extensions (\cref{higher-arity-co-kusarigama-from-1-1-co-kusarigama}), starting from the case $\typepq{1}{1}$:
	      \[
		      \ckgpq{p}{q}{F} \cong \Lan_{\Delta_{q,p}}\left(\ckgb{\Delta_{p,q}^{*}(F)}\right),
		      \qquad
		      \kgpq{p}{q}{G} \cong \Ran_{\Delta_{p,q}}\left(\kgb{\Delta_{q,p}^{*}(G)}\right).
	      \]
	\item \say{Twisted versions} of $F$ and $G$, which may be computed by use of a similar formula as the one computing co/ends as co/limits via the twisted arrow category (\cref{kusarigama-twisted-categories}).
\end{enumerate}
Finally, the paramount property of the co/kusarigama functors is that given a category $\clC$, the category of elements of $\ckgpq{p}{q}{\pt}$, where $\pt : \pq{\clC} \to \Set$ is the terminal presheaf, is the universal fibration needed to build a higher-arity version of the \emph{twisted arrow category} (i.e., the category of elements of $\hom_\clC$): we study this construction in \cref{higher-arity-twisted-arrow-categories}. This makes it possible to express the $(p,q)$\hyp{}co/end of a diagram $G : \pq{\clC} \to \clD$ as a co/limit over the $(p,q)$\hyp{}twisted arrow category of $\clC$.
\subsection{Co/kusarigama: basic definitions}\label{rewriting}
Let $\clC$ and $\clD$ be categories.
\begin{definition}\label{def:co-kusarigama}
	Let $F$ and $G$ be functor from $\clC$ to $\clD$ of types $\typepq{p}{q}$ and $\typepq{q}{p}$.
	\begin{enumtag}{ck}
		\item\label{def:kusarigama-ck}The \emph{kusarigama}%
		\footnote{
			A \emph{kusarigama} (\jap{鎖鎌}) is a Japanese compound weapon made of a sickle (\emph{kama}) and a blunt weight (\emph{fundo}) attached to the opposite ends of a chain (\emph{kusari}).
			The weight was used to disarm the opponent by entangling their sword in the chain, or as a single weapon; disarmed or damaged the opponent, the sickle was then used to deliver the final, fatal strike. Kusarigamas were probably adapted from an old farming tool, and first adopted by Koga ninjas as a fast, compact weapon; its use then spread to tactic-oriented esoteric weaponry schools like Shinkage-ry\=u and Sui\=o-ry\=u. See \cite{vaga} for more information.

            The reason for this terminological choice is the following: as proved in \cref{con:constructing-cokusarigama}, $\ckgpq{p}{q}{F}$ is computed via the $(p,q)$-coend formula
            \[
                \ckgpq{p}{q}{F} \cong \pqCoend{p}{q}{A\in\clC}\left(\ph^{-}_{A_{1}}\times\cdots\times\ph^{-}_{A_{q}}\times\ph^{A_{1}}_{-}\times\cdots\times\ph^{A_{p}}_{-}\right)\odot F^{\bsA}_{\bsA}.
            \]
            In this equation, we have a sickle $\pqEnd{p}{q}{}$ connected to the weight $F$ by the chain $\ph^{-}_{A_{1}}\times\cdots\times\ph^{-}_{A_{q}}\times\ph^{A_{1}}_{-}\times\cdots\times\ph^{A_{p}}_{-}$ of hom-functors.
		} %
		of $G$ is, if it exists, the object
		\[\kgpq{q}{p}{G}\colon\pq{\clC}\to \clD\]
		of $\Cat(\pq{\clC},\clD)$ representing the functor
		\[\pqDiNat{p}{q}(-,G)\colon\Cat(\pq{\clC},\clD)\to \Sets.\]
		\item\label{def:cokusarigama-ck}The \emph{cokusarigama} of $F$ is, if it exists, the object
		\[\ckgpq{p}{q}{F}\colon\pq{\clC}[q][p]\to \clD\]
		of $\Cat(\pq{\clC},\clD)$ corepresenting the functor
		\[\pqDiNat{p}{q}(F,-)\colon\Cat(\pq{\clC}[q][p],\clD)\to \Sets.\]
	\end{enumtag}
\end{definition}
\begin{remark}\label{co-kusarigama-from-dinat-to-nat}
	Thus, co/kusarigama are defined by the following relations:
	\begin{align*}
		\Nat\left(\ckgpq{p}{q}{F},-\right) & \cong \pqDiNat{p}{q}(F,-), \\
		\Nat\left(-,\kgpq{p}{q}{G}\right)  & \cong \pqDiNat{p}{q}(-,G).
	\end{align*}
\end{remark}
It is crucial to focus on the exact way in which the types of $F,G$, and of $\ckgpq{p}{q}{F},\kgpq{p}{q}{G}$ interchange: asking that $F,G$ be of type of types $\typepq{p}{q}$ and $\typepq{q}{p}$ is the only possible choice for the three objects $\Nat(\ckgpq{p}{q}{F},G)$, $\Nat(F,\kgpq{p}{q}{G})$ and $\pqDiNat{p}{q}(F,G)$ to exist, according to our \cref{def:p-q-dinatural-transformation}.

This means that $\kgfpq{p}{q}, \ckgfpq{p}{q}$ are candidates to be functors
\[\ckgfpq{p}{q} : \Cat(\pq{\clC},\clD) \to \Cat(\pq{\clC}[q][p],\clD) \qquad \kgfpq{p}{q} : \Cat(\pq{\clC}[q][p],\clD) \to \Cat(\pq{\clC},\clD) \]
Among many other properties, we prove in \cref{properties-of-co-kusarigama} that these correspondences are indeed functors.
\begin{remark}[Unwinding \cref{def:co-kusarigama}]\label{rem:unwinding-co-kusarigama}
	The co/representability conditions defining co/kusarigama unwind as the following universal properties:
	\begin{enumtag}{uk}
		\item The \emph{cokusarigama} of a functor $F : \pq{\clC}\to \clD$ is, if it exists, the pair $(\ckgpq{p}{q}{F},\eta)$ with
		\[
			\ckgpq{q}{p}{F} : \pq{\clC}[q][p]\to \clD
		\]
		a functor of type $\typepq{p}{q}$, and
		\[
			\eta : F\din\ckgpq{p}{q}{F}
		\]
		a $(p,q)$\hyp{}dinatural transformation satisfying the following universal property:

		\begin{quote}
			$(\star)$ Given a $(p,q)$\hyp{}dinatural transformation $\theta : F\din G$, there exists a unique natural transformation $\ckgpq{p}{q}{F}\Longrightarrow G$ making the diagram
			\[
				\begin{tikzcd}[row sep={5.4em,between origins}, column sep={5.4em,between origins},  ampersand replacement=\&]
					\ckgpq{p}{q}{F}
					\arrow[rd,"\exists!",Rightarrow]
					\&
					\\
					F
					\arrow[r,"\theta"' ,din=0.975, shorten >=+0.1em]
					\arrow[u,"\eta",    din=0.975, shorten >=+0.1em]
					\&
					G
				\end{tikzcd}
			\]
			commute.
		\end{quote}
		\item The \emph{kusarigama} of a functor $G : \pq{\clC}[q][p]\to \clD$ is, if it exists, the pair $(\kgpq{q}{p}{G},\epsilon)$ with
		\[
			\kgpq{q}{p}{G} : \pq{\clC}\to \clD
		\]
		a functor of type $\typepq{p}{q}$, and
		\[
			\epsilon : \kgpq{q}{p}{G}\din G
		\]
		a $(p,q)$\hyp{}dinatural transformation satisfying the following universal property:

		\begin{quote}
			$(\star)$ Given a $(p,q)$\hyp{}dinatural transformation $\theta :  F\din G$, there exists a unique natural transformation $F\Longrightarrow\kgpq{q}{p}{G}$ making the diagram
			\[
				\begin{tikzcd}[row sep={5.4em,between origins}, column sep={5.4em,between origins},  ampersand replacement=\&]
					\&
					\kgpq{q}{p}{G}
					\arrow[d,"\epsilon",din=0.975, shorten >=+0.1em]
					\\
					F
					\arrow[r,"\theta"'  ,din=0.975, shorten >=+0.1em]
					\arrow[ru,"\exists!",Rightarrow]
					\&
					G
				\end{tikzcd}
			\]
			commute.
		\end{quote}
	\end{enumtag}
\end{remark}
\begin{notation}%
	Given tuples $\uA,\uC\in\clC^{-p}=(\clC^p)^\op, \uB,\uD\in\clC^q$ we make use of the notation
	\begin{align*}
		\hom_{\pq{\clC}}((\uA,\uB),(\uC,\uD)) & \defeq \ph^{(\uA,\uB)}_{(\uC,\uD)},
	\end{align*}
	as well as of the equalities
	\begin{align*}
		\ph^{(\uA,\uB)}_{(\uC,\uD)} & \defeq \ph^{\uC}_{\uA}\times\ph^{\uB}_{\uD} = \ph^{C_{1}}_{A_{1}}\times\cdots\times\ph^{C_{p}}_{A_{p}}\times\ph^{B_{1}}_{D_{1}}\times\cdots\times\ph^{B_{q}}_{D_{q}}.
	\end{align*}
\end{notation}
\begin{construction}[Constructing cokusarigama]\label{con:constructing-cokusarigama}%
	Suppose that $\clD$ is cocomplete. Then
    \begin{equation}\label{eq:constructing-cokusarigama}
		\pqCoend{p}{q}{A\in\clC}\left(\ph^{-}_{\bsA}\times\ph^{\bsA}_{-}\right)\odot F^{\bsA}_{\bsA}
    \end{equation}
	meaning the $(p,q)$\hyp{}coend of
	\[
		\begin{tikzcd}[row sep=0.0em, column sep=2.7em,  ampersand replacement=\&]
			\pq{\clC}
			\arrow[r]
			\&
			\Cat(\pq{\clC}[q][p],\clD)
			\\
			(\uA,\uB)
			\arrow[r, mapsto]
			\&
			\hom_{\pq{\clC}[q][p]}\left((\uB,\uA);(-,-)\right)\odot F^{\uA}_{\uB},
		\end{tikzcd}
	\]
	satisfies the universal property in \cref{def:cokusarigama-ck}.
\end{construction}
\begin{proof}%
	The proof is merely a formal manipulation:
	\begin{align*}
		\pqDiNat{p}{q}(F,G) & \cong  \pqEnd{q}{p}{X\in\clC}\Hom_{\clD}\left(F^{\bsX}_{\bsX},G^{\bsX}_{\bsX}\right)                                                                                  \\
		                    & \cong  \pqEnd{q}{p}{X\in\clC}\Hom_{\clD}\left(F^{\bsX}_{\bsX},\int_{\uA,\uB\in\clC}\left(\ph^{\uA}_{\bsX}\times\ph^{\bsX}_{\uB}\right)\pitchfork G^{\uA}_{\uB}\right) \\
		                    & \cong  \pqEnd{q}{p}{X\in\clC}\int_{\uA,\uB\in\clC}\Hom_{\clD}\left(F^{\bsX}_{\bsX},\left(\ph^{\uA}_{\bsX}\times\ph^{\bsX}_{\uB}\right)\pitchfork G^{\uA}_{\uB}\right) \\
		                    & \cong  \pqEnd{q}{p}{X\in\clC}\int_{\uA,\uB\in\clC}\Hom_{\clD}\left(\left(\ph^{\uA}_{\bsX}\times\ph^{\bsX}_{\uB}\right)\odot F^{\bsX}_{\bsX},G^{\uA}_{\uB}\right)      \\
		                    & \cong  \int_{\uA,\uB\in\clC}\pqEnd{q}{p}{X\in\clC}\Hom_{\clD}\left(\left(\ph^{\uA}_{\bsX}\times\ph^{\bsX}_{\uB}\right)\odot F^{\bsX}_{\bsX},G^{\uA}_{\uB}\right)      \\
		                    & \cong  \int_{\uA,\uB\in\clC}\Hom_{\clD}\left(\pqCoend{p}{q}{X\in\clC}\left(\ph^{\uA}_{\bsX}\times\ph^{\bsX}_{\uB}\right)\odot F^{\bsX}_{\bsX},G^{\uA}_{\uB}\right)    \\
		                    & \defeq \int_{\uA,\uB\in\clC}\Hom_{\clD}\left(\ckgpq{p}{q}{F}^{\uA}_{\uB},G^{\uA}_{\uB}\right)                                                                         \\
		                    & \cong \Nat\left(\ckgpq{p}{q}{F},G\right).\qedhere
	\end{align*}
\end{proof}
\begin{construction}[Constructing Kusarigamas]\label{con:constructing-kusarigama}%
	Suppose that $\clD$ is complete. Then
	\[
		\pqEnd{q}{p}{A\in\clC}\left(\ph^{\bsA}_{-}\times\ph^{-}_{\bsA}\right)\pitchfork G^{\bsA}_{\bsA},
	\]%
	meaning the $(q,p)$-end of
	\begin{diagram}\label{diagram:constructions-of-kusarigama}
		\begin{tikzcd}[row sep=0.0em, column sep=2.7em,  ampersand replacement=\&]
			\pq{\clC}[q][p]
			\arrow[r]
			\&
			\Cat(\pq{\clC},\clD)
			\\
			(\uA,\uB)
			\arrow[r, mapsto]
			\&
			\hom_{\pq{\clC}[q][p]}\left((\uA,\uB);(-,-)\right)\pitchfork G^{\uA}_{\uB},
		\end{tikzcd}
	\end{diagram}
	satisfies the universal property in \cref{def:cokusarigama-ck}.
\end{construction}
\begin{proof}%
	While this is dual to \cref{con:constructing-cokusarigama}, we register its derivation below for the sake of completeness.
	\begin{align*}
		\pqDiNat{p}{q}(F,G) & \cong  \pqEnd{q}{p}{X\in\clC}\Hom_{\clD}\left(F^{\bsY}_{\bsX},G^{\bsX}_{\bsY}\right)                                                                                  \\
		                    & \cong  \pqEnd{q}{p}{X\in\clC}\Hom_{\clD}\left(\int^{\uA,\uB\in\clC}\left(\ph^{\bsY}_{\uA}\times\ph^{\uB}_{\bsX}\right)\odot F^{\uA}_{\uB},G^{\bsX}_{\bsY}\right)      \\
		                    & \cong  \pqEnd{q}{p}{X\in\clC}\int_{\uA,\uB\in\clC}\Hom_{\clD}\left(\left(\ph^{\bsY}_{\uA}\times\ph^{\uB}_{\bsX}\right)\odot F^{\uA}_{\uB},G^{\bsX}_{\bsY}\right)      \\
		                    & \cong  \pqEnd{q}{p}{X\in\clC}\int_{\uA,\uB\in\clC}\Hom_{\clD}\left(F^{\uA}_{\uB},\left(\ph^{\bsY}_{\uA}\times\ph^{\uB}_{\bsX}\right)\pitchfork G^{\bsX}_{\bsY}\right) \\
		                    & \cong  \int_{\uA,\uB\in\clC}\pqEnd{q}{p}{X\in\clC}\Hom_{\clD}\left(F^{\uA}_{\uB},\left(\ph^{\bsY}_{\uA}\times\ph^{\uB}_{\bsX}\right)\pitchfork G^{\bsX}_{\bsY}\right) \\
		                    & \cong  \int_{\uA,\uB\in\clC}\Hom_{\clD}\left(F^{\uA}_{\uB},\pqEnd{q}{p}{X\in\clC}\left(\ph^{\bsY}_{\uA}\times\ph^{\uB}_{\bsX}\right)\pitchfork G^{\bsX}_{\bsY}\right) \\
		                    & \defeq \int_{\uA,\uB\in\clC}\Hom_{\clD}\left(F^{\uA}_{\uB},\kgpq{p}{q}{G}^{\uA}_{\uB}\right)                                                                          \\
		                    & \cong \Nat(F,\kgpq{p}{q}{G}).\qedhere
	\end{align*}
\end{proof}%
\begin{notation}[$(1,1)$-kusarigama]
	For the sake of brevity, we often write $\kg{D}$ and $\ckg{D}$ for $\kgpq{1}{1}{D}$ and $\ckgpq{1}{1}{D}$, respectively.
\end{notation}
\begin{proposition}[Properties of co/kusarigama]\label{properties-of-co-kusarigama}
	Let $D,F,G :\pq{\clC}\rightrightarrows\clD$ be functors, where $\clD$ is a bicomplete category.
	\begin{enumtag}{pk}
		\item\label{functoriality-of-co-kusarigama}\SloganFont{Functoriality}The assignments $D\mapsto\kg{D},\ckg{D}$ define functors
		\begin{align*}
			\ckgfpq{p}{q} & : \Cat\big(\pq{\clC},\clD\big)\to \Cat\big(\pq{\clC}[q][p],\clD\big), \\
			\kgfpq{p}{q}  & : \Cat\big(\pq{\clC},\clD\big)\to \Cat\big(\pq{\clC}[q][p],\clD\big).
		\end{align*}
		\item\label{adjointness-of-co-kusarigama}\SloganFont{Adjointness}We have an adjunction
		\begin{equation*}
			\begin{tikzcd}
				\Cat\big(\pq{\clC},\clD\big)
				\arrow[r, "\ckgfpq{p}{q}"{name=F}, shift left=1.75] &
				\Cat\left(\pq{\clC}[q][p],\clD\right).
				\arrow[l, "\kgfpq{q}{p}"{name=G}, shift left=1.75]
				\arrow[phantom, from=F, to=G, "\dashv" rotate=-90]
			\end{tikzcd}
		\end{equation*}
		\item\label{commutativity-of-co-kusarigama-with-homs}\SloganFont{Commutativity with homs} Let $F : \pq{\clC} \to \clD$ be a functor, and let us consider the functors
		\begin{align*}
			\clD(F,1) : \clD \to \Cat(\pq{\clC}[q][p],\Set), D & \mapsto \big((\uA,\uB)\mapsto\clD\left(F^{\uA}_{\uB}, D\right)\big), \\
			\clD(1,F) : \clD^\op \to \Cat(\pq{\clC},\Set), D   & \mapsto \big((\uA,\uB)\mapsto\clD\left(D,F^{\uA}_{\uB}\right)\big),
		\end{align*}
		then the diagrams
		\begin{center}
			\begin{adjustbox}{max height=0.5\textheight, max width=0.875\textwidth}
				\parbox{\linewidth}{
					\[
						\begin{tikzcd}[row sep={4.5em,between origins}, column sep={5.0em,between origins},  ampersand replacement=\&]
							\&\clD\ar[dr, "{\clD(F,1)}"]\ar[dl, "{\clD(\ckgpq{p}{q}F,1)}"']\& \\
							\Cat(\pq{\clC}[q][p],\Set) \&\&\ar[ll,"\kgfpq{q}{p}"] \Cat(\pq{\clC},\Set)
						\end{tikzcd}\qquad
						\begin{tikzcd}[row sep={4.5em,between origins}, column sep={5.0em,between origins},  ampersand replacement=\&]
							\&\clD \ar[dr, "{\clD(1,F)}"]\ar[dl, "{\clD(1,\kgpq{p}{q}F)}"'] \& \\
							\Cat(\pq{\clC},\Set) \&\&\ar[ll,"\kgfpq{p}{q}"] \Cat(\pq{\clC}[q][p],\Set)
						\end{tikzcd}
					\]
				}
			\end{adjustbox}
		\end{center}
		commute:
		\[ 	\clD(\ckgpq{p}{q}{F},1) \cong \kgpq{q}{p}{\clD(F,1)} \qquad \qquad
			\clD(1,\kgpq{p}{q}{F}) \cong \kgpq{p}{q}{\clD(1,F)}.\]
		\item\label{co-limits-of-kusarigama}\SloganFont{Limits of kusarigama} Let $F : \pq{\clC} \to \clD$ be a functor; we have functorial isomorphisms
		\[
			\pqEnd{p}{q}{A\in\clC}F^{\bsA}_{\bsA}   \cong \lim\left(\kgpq{p}{q}{F}\right),   \qquad
			\pqCoend{p}{q}{A\in\clC}F^{\bsA}_{\bsA} \cong \colim\left(\ckgpq{q}{p}{F}\right).
		\]
		\item\label{higher-arity-co-kusarigama-from-1-1-co-kusarigama}\SloganFont{Higher arity co/kusarigama from $(1,1)$-co/kusarigama}The cokusarigama
		\[\ckgpq{p}{q}{F}\colon\pq{\clC}[q][p]\to \clD\]
		of a functor $F\colon\pq{\clC}\to \clD$ is the left Kan extension of the $(1,1)$-cokusarigama of $\pqdiag^{*}(F)$ along $\Delta_{q,p}$:
		\[
			\ckgpq{p}{q}{F}
			=
			\Lan_{\Delta_{q,p}}\left(\ckgb{\Delta_{p,q}^{*}(F)}\right)
			\quad
			\begin{tikzcd}[row sep={6.3em,between origins}, column sep={8.1em,between origins},  ampersand replacement=\&]
				{}
				\&
				\pq{\clC}[q][p]
				\arrow[d, "\ckgpq{p}{q}{F}", dashed]
				\\
				\clC^{\op}\times\clC
				\arrow[ru, "\Delta_{q,p}"]
				\arrow[r, "\ckgb{\pqdiag^{*}(F)}"'{name=F}]
				\&
				\clD.
				\arrow[from=F,to=1-2,shorten=2.0em,Rightarrow,xshift=-0.0em,yshift=-0.25em]
			\end{tikzcd}
		\]
		Moreover, if $\clC$ has finite products and finite coproducts, then $\ckgpq{p}{q}{-}$ factors as
		\[
			\begin{tikzcd}[row sep={10.4em,between origins}, column sep={10.4em,between origins},  ampersand replacement=\&]
				\Cat\left(\pq{\clC},\clD\right)
				\arrow[r, "\pqdiag^{*}"]
				\&
				\Cat\left(\clC^{\op}\times\clC,\clD\right)
				\arrow[r, "\ckgf"]
				\&
				\Cat\left(\clC^{\op}\times\clC,\clD\right)
				\arrow[r, "\left(\prodbi^{q,p}\right)^{*}"]
				\&
				\Cat\left(\pq{\clC},\clD\right).
			\end{tikzcd}
		\]
		Dually, the kusarigama
		\[\kgpq{q}{p}{G}\colon\pq{\clC}\to \clD\]
		of $G\colon\pq{\clC}[q][p]\to \clD$ is the right Kan extension of the $(1,1)$-kusarigama of $\Delta_{q,p}^{*}(G)$ along $\pqdiag$:
		\[
			\kgpq{q}{p}{G}
			=
			\Ran_{\Delta_{p,q}}\left(\kgb{\Delta_{q,p}^{*}(G)}\right)
			\quad
			\begin{tikzcd}[row sep={6.3em,between origins}, column sep={8.1em,between origins},  ampersand replacement=\&]
				{}
				\&
				\pq{\clC}
				\arrow[d, "\kgpq{q}{p}{G}", dashed]
				\\
				\clC^{\op}\times\clC
				\arrow[ru, "\pqdiag"]
				\arrow[r, "\kgb{\Delta_{q,p}^{*}(G)}"'{name=F}]
				\&
				\clD.
				\arrow[from=F,to=1-2,shorten=2.0em,Leftarrow,xshift=-0.0em,yshift=-0.25em]
			\end{tikzcd}
		\]
		Moreover, if $\clC$ has finite products and finite coproducts, then $\kgpq{q}{p}{-}$ factors as
		\[
			\begin{tikzcd}[row sep={10.4em,between origins}, column sep={10.4em,between origins},  ampersand replacement=\&]
				\Cat\left(\pq{\clC},\clD\right)
				\arrow[r, "\Delta_{q,p}^{*}"]
				\&
				\Cat\left(\clC^{\op}\times\clC,\clD\right)
				\arrow[r, "\kgf"]
				\&
				\Cat\left(\clC^{\op}\times\clC,\clD\right)
				\arrow[r, "\left(\M^{p,q}\right)^{*}"]
				\&
				\Cat\left(\pq{\clC},\clD\right).
			\end{tikzcd}
		\]
		In fact, the adjunction yielding $\textcolor{OIblue}{\ckgfpq{p}{q}}\dashv\textcolor{OIvermillion}{\kgfpq{p}{q}}$ can be extended as in the following diagram of adjunctions:
		\[
			\begin{tikzcd}[row sep={11.7em,between origins}, column sep={11.7em,between origins},  ampersand replacement=\&]
				\left[\pq{\clC},\clD\right]
				\arrow[r,"\Lan_{\W_{p,q}}"{name=1},                        shift left=14]
				\arrow[r,"\W_{p,q}^{*}"{name=2,description},  shift left=7,leftarrow]
				\arrow[r,"\pqdiag^{*}"{name=3,description},OIblue]
				\arrow[r,"\M_{p,q}^{*}"{name=4,description},  shift right=7,leftarrow,OIvermillion]
				\arrow[r,"\Ran_{\M_{p,q}}"'{name=5},                       shift right=14]
				\&
				\left[\clC^{\op}\times\clC,\clD\right]
				\arrow[r, "\ckgf"{name=ckg},shift left=2,OIblue]
				\arrow[r, "\kgf"'{name=kg},shift right=2,leftarrow,OIvermillion]
				\&
				\left[\clC^{\op}\times\clC,\clD\right]
				\arrow[r,"\Lan_{\W_{q,p}}"{name=1'}, shift left=14,leftarrow]
				\arrow[r,"\W_{q,p}^{*}"{name=2',description},  shift left=7,OIblue]
				\arrow[r,"\Delta_{q,p}^{*}"{name=3',description},leftarrow,OIvermillion]
				\arrow[r,"\M_{q,p}^{*}"{name=4',description},  shift right=7]
				\arrow[r,"\Ran_{\M_{q,p}}"'{name=5'}, shift right=14,leftarrow]
				\&
				\left[\pq{\clC}[q][p],\clD\right];
				\arrow[from=1,to=2,"\dashv"{rotate=-90},phantom]
				\arrow[from=2,to=3,"\dashv"{rotate=-90},phantom]
				\arrow[from=3,to=4,"\dashv"{rotate=-90},phantom]
				\arrow[from=4,to=5,"\dashv"{rotate=-90},phantom]
				\arrow[from=kg,to=ckg,"\dashv"{rotate=-90},phantom]
				\arrow[from=1',to=2',"\dashv"{rotate=-90},phantom]
				\arrow[from=2',to=3',"\dashv"{rotate=-90},phantom]
				\arrow[from=3',to=4',"\dashv"{rotate=-90},phantom]
				\arrow[from=4',to=5',"\dashv"{rotate=-90},phantom]
			\end{tikzcd}
		\]
	\end{enumtag}
	see \cref{quintuple-biprod-delta-pq-adjunction}.
\end{proposition}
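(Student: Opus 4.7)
The plan is to establish each claim by repeatedly exploiting the defining universal properties of the co/kusarigama (\cref{def:co-kusarigama}) together with the explicit $(p,q)$-co/end formulas of \cref{con:constructing-cokusarigama,con:constructing-kusarigama}, translating freely between the representing-object and end-theoretic viewpoints via the Yoneda lemma.

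Statements \cref{functoriality-of-co-kusarigama} and \cref{adjointness-of-co-kusarigama} are essentially free: functoriality follows because a choice of (co)representing object is always functorial in the (co)represented functor (it may alternatively be read off from the explicit $(p,q)$-coend formula via \cref{functoriality-of-p-q-ends}), while chaining the two defining bijections yields
\[
    \Nat(\ckgpq{p}{q}{F},G) \cong \pqDiNat{p}{q}(F,G) \cong \Nat(F,\kgpq{q}{p}{G})
\]
on the nose. For \cref{commutativity-of-co-kusarigama-with-homs}, the strategy is to start from the coend formula for $\ckgpq{p}{q}{F}^{\uA}_{\uB}$, push the hom inside the $(p,q)$-coend via \cref{p-q-ends-commute-with-homs}, and recognize the resulting $(q,p)$-end of cotensors as the kusarigama formula for $\kgpq{q}{p}{\clD(F,D)}^{\uA}_{\uB}$ supplied by \cref{con:constructing-kusarigama}; the dual identity follows by reversing arrows. \cref{co-limits-of-kusarigama} is a brief Yoneda argument: the defining bijection gives $\Nat(\Delta_X,\kgpq{p}{q}{F}) \cong \pqDiNat{p}{q}(\Delta_X,F)$, which is by definition the set of $(p,q)$-wedges for $F$ under $X$ and hence $\clD(X,\pqEnd{p}{q}{A}F^{\bsA}_{\bsA})$; Yoneda then concludes, with the colimit statement being dual.

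The substantive work lies in \cref{higher-arity-co-kusarigama-from-1-1-co-kusarigama}. The central isomorphism $\ckgpq{p}{q}{F} \cong \Lan_{\Delta_{q,p}}\ckg{\pqdiag^{*}F}$ will again be extracted by Yoneda from the chain
\begin{align*}
    \Nat(\ckgpq{p}{q}{F},G)
    &\cong \pqDiNat{p}{q}(F,G) \\
    &\cong \pqDiNat{1}{1}(\pqdiag^{*}F,\Delta_{q,p}^{*}G) \\
    &\cong \Nat(\ckg{\pqdiag^{*}F},\Delta_{q,p}^{*}G) \\
    &\cong \Nat(\Lan_{\Delta_{q,p}}\ckg{\pqdiag^{*}F},G),
\end{align*}
whose second step is precisely \cref{higher-arity-dinaturality-via-ordinary-dinaturality}; the dual identification $\kgpq{q}{p}{G} \cong \Ran_{\pqdiag}\kg{\Delta_{q,p}^{*}G}$ is completely parallel. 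The factorization through $\M_{q,p}^{*}$ and $\W_{p,q}^{*}$ then follows by substituting the isomorphisms $\Lan_{\pqdiag} \cong \M_{p,q}^{*}$ and $\Ran_{\pqdiag} \cong \W_{p,q}^{*}$ supplied by \cref{quintuple-biprod-delta-pq-adjunction}. Finally, the large diagram of adjunctions is assembled by pasting the quintuple adjunction of \cref{quintuple-biprod-delta-pq-adjunction} on each flank with the central $\ckg \dashv \kg$ adjunction, and invoking uniqueness of adjoints to match the outer composites with $\ckgfpq{p}{q}$ and $\kgfpq{q}{p}$. The main obstacle I foresee is essentially notational: keeping the two swapped arities $(p,q)$ and $(q,p)$ aligned across both halves of the diagram and verifying that the Kan-extension-vs-pullback identifications cohere at both ends. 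No single step is deep, but careful variance bookkeeping is unavoidable.
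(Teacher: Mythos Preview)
Your proposal is correct and follows essentially the same route as the paper: both arguments hinge on the defining bijections $\Nat(\ckgpq{p}{q}{F},G)\cong\pqDiNat{p}{q}(F,G)\cong\Nat(F,\kgpq{q}{p}{G})$, the explicit co/end formulas of \cref{con:constructing-cokusarigama,con:constructing-kusarigama}, and the chain in \cref{higher-arity-co-kusarigama-from-1-1-co-kusarigama} via \cref{higher-arity-dinaturality-via-ordinary-dinaturality}. The only minor deviation is that for \cref{co-limits-of-kusarigama} you argue directly from $\Nat(\Delta_{X},\kgpq{p}{q}{F})\cong\pqWedges{p}{q}{X}{F}$, whereas the paper routes through \cref{commutativity-of-co-kusarigama-with-homs} applied to $h_{D}$; your version is marginally more direct but otherwise equivalent.
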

\begin{proof}
	We often prove the statements for cokusarigama only, as the ones for kusarigama follow by an easy dualisation.

	\cref{functoriality-of-co-kusarigama}: This follows from \cite[Theorems IX.7.2 and IX.7.3]{working-categories}.

	\cref{adjointness-of-co-kusarigama}: This follows straight from the definition of co/kusarigama.

	\cref{commutativity-of-co-kusarigama-with-homs}: For the first statement, we have
	\begin{align*}
		\clD\left(1,\pqEnd{q}{p}{A\in\clC}\ph^{(A,\ldots,A)}\pitchfork F^{\bsA}_{\bsA}\right) & \cong  \pqEnd{q}{p}{A\in\clC}\clD\left(-,\ph^{(A,\ldots,A)}\pitchfork F^{\bsA}_{\bsA}\right) \\
		                                                                                    & \cong  \pqEnd{q}{p}{A\in\clC}\ph^{(A,\ldots,A)}\pitchfork\clD\left(-,F^{\bsA}_{\bsA}\right)  \\
		                                                                                    & \defeq \kgpq{p}{q}{\clD(1,F)}.
	\end{align*}

	\cref{co-limits-of-kusarigama}: We just prove the first statement, the other being a straightforward dualisation. We have
	\begin{alignat*}{2}
		\clD\left(-,\pqEnd{p}{q}{A}D^{\bsA}_{\bsA}\right) & \defeq \pqDiNat{p}{q}\left(\Delta_{\pt},h_{D}\right)
		                                                & \qquad
		                                                & \text{}                                                     \\
		                                                & \cong  \Nat\left(\Delta_{\pt},\kgpq{p}{q}{h_{D}}\right)
		                                                & \qquad
		                                                & \text{by \cref{co-kusarigama-from-dinat-to-nat},}          \\
		                                                & \cong  \Nat\left(\Delta_{\pt},h_{\kgpq{p}{q}{D}}\right)
		                                                & \qquad
		                                                & \text{by \cref{commutativity-of-co-kusarigama-with-homs},} \\
		                                                & \defeq h_{\lim(\kgpq{p}{q}{D})}.
		                                                & \qquad
		                                                & \text{}
	\end{alignat*}
	The result then follows from the Yoneda lemma.

	\cref{higher-arity-co-kusarigama-from-1-1-co-kusarigama}: We have
	\begin{alignat*}{2}
		\Nat\left(\Lan_{\Delta_{q,p}}\ckgb{\pqdiag^{*}(F)},G\right) & \defeq \Nat\left(\ckgb{\pqdiag^{*}(F)},\Delta_{q,p}^{*}(G)\right)
		                                                            & \qquad
		                                                            & \text{}                                                               \\
		                                                            & \cong  \DiNat\left(\pqdiag^{*}(F),\Delta_{q,p}^{*}(G)\right)
		                                                            & \qquad
		                                                            & \text{by \cref{co-kusarigama-from-dinat-to-nat},}                    \\
		                                                            & \cong  \pqDiNat{p}{q}\left(F,G\right)
		                                                            & \qquad
		                                                            & \text{by \cref{higher-arity-dinaturality-via-ordinary-dinaturality},} \\
		                                                            & \cong  \Nat\left(\ckgpq{p}{q}{F},G\right)
		                                                            & \qquad
		                                                            & \text{by \cref{co-kusarigama-from-dinat-to-nat} again.}
	\end{alignat*}
	The stated factorisation follows from the isomorphism $\Lan_{\Delta_{q,p}}\cong\smash{\left(\prodbi^{q,p}\right)^{*}}$ of \cref{quintuple-biprod-delta-pq-adjunction}.
\end{proof}
\subsection{Examples of co/kusarigama}\label{sec:examples-of-co-kusarigama}
\begin{example}[Cokusarigama of $\hom$ functors]%
    The computation given in the proof of \cref{ponk} generalises to show that, given $(\uA,\uB)\in \clC^{(p,q)}_o$, the cokusarigama of the functor $\ph_{(\uA,\uB)}\colon\clC^{(q,p)}\to \Sets$, which may be written as
    \begin{align*}
        \Hom_{\clC^{(p,q)}}((-,-);(\uA,\uB)) & \defeq \Hom_{\clC^{p}}(\uA,-)\times\Hom_{\clC^{q}}(-,\uB) \\
                                             & \defeq
        \ph^{A_{1}}_{-_{q+1}}
        \times
        \cdots
        \times
        \ph^{A_{p}}_{-_{q+p}}
        \times
        \ph^{-_{1}}_{B_{1}}
        \times
        \cdots
        \times
        \ph^{-_{q}}_{B_{q}},
    \end{align*}
    is given by
    \begin{align*}
        \ckgpq{q}{p}{\ph_{(\uA,\uB)}}
         & \defeq
        \int^{X\in\clC}
        \ph^{X}_{-_{p+1}}\times\cdots\times\ph^{X}_{-_{p+q}}\times\ph^{-_{1}}_{X}\times\cdots\times\ph^{-_{p}}_{X}
        \times
        \ph^{A_{1}}_{X}\times\cdots\times\ph^{A_{p}}_{X}\times\ph^{X}_{B_{1}}\times\cdots\times\ph^{X}_{B_{q}} \\
         & \cong
        \left(
        \ph^{A_{1}}_{B_{1}}\times\cdots\times\ph^{A_{1}}_{B_{q}}
        \times
        \cdots
        \times
        \ph^{A_{p}}_{B_{1}}\times\cdots\times\ph^{A_{p}}_{B_{q}}
        \right)
        \\
         & \times
        \left(
        \ph^{A_{1}}_{-_{p+1}}\times\cdots\times\ph^{A_{1}}_{-_{p+q}}
        \times
        \cdots
        \times
        \ph^{A_{p}}_{-_{p+1}}\times\cdots\times\ph^{A_{p}}_{-_{p+q}}
        \right)
        \\
         & \times
        \left(
        \ph^{-_{1}}_{B_{1}}\times\cdots\times\ph^{-_{1}}_{B_{q}}
        \times
        \cdots
        \times
        \ph^{-_{p}}_{B_{1}}\times\cdots\times\ph^{-_{p}}_{B_{q}}
        \right)
        \\
         & \times
        \left(
        \ph^{-_{1}}_{-_{p+1}}\times\cdots\times\ph^{-_{1}}_{-_{p+q}}
        \times
        \cdots
        \times
        \ph^{-_{p}}_{-_{p+1}}\times\cdots\times\ph^{-_{p}}_{-_{p+q}}
        \right)
    \end{align*}
\end{example}
\begin{example}[Co/kusarigama of constant functors]
	Let $E$ be a set and let's equally denote $E : \clC^{(p,q)} \to \Set$ the constant functor on $E$; assume $\clC$ has finite products and coproducts; then, we can compute the kusarigama of $E$ as the integral
	\begin{align*}
		\kg{\underline E} & \cong \int_A \big(\clC[\bsY|A] \times \clC[A|\bsX] \big) \pitchfork E                                  \\
		                  & \cong \int_A \textstyle\Set\Big(\clC(A,\prod X_i),\Set\big(\clC(\coprod Y_j,A),E\big)\Big)             \\
		                  & \cong \textstyle\Cat(\clC^{(p,q)},\Set)\Big(\clC(-,\prod X_i),\Set\big(\clC(\coprod Y_j,-),E\big)\Big) \\
		                  & \cong \textstyle \Set\big(\clC(Y,X), E\big),
	\end{align*}
	where $X \defeq \prod X_i, Y \defeq \coprod Y_j$.

	In particular, when $\clD=\Sets$:
	\[\kg{\one}=\int_{A\in\clC}\left[\ph_{A}\times\ph^{A},\one\right]\cong{\one}.\]
	This is in accordance with the fact that dinatural transformations to $\Delta_{\one}$ coincide with natural transformations to $\Delta_{\one}$.

    Dually,
	\begin{align}
		\ckg{E}_{\bsX}^{\bsY} & = \int^A(\ph^A)^p \times (\ph_A)^q \times E \notag                \\
		                      & \cong \int^A \clC[\bsY|A] \times \clC[A|\bsX] \times E \notag     \\
		                      & \cong \Big(\int^A \clC(Y,A) \times \clC(A,X)\Big) \times E \notag \\
		                      & \cong \clC\big(Y,X\big) \times E,
	\end{align}
	where $X \defeq \prod X_i$ and $Y \defeq \coprod Y_j$.
\end{example}
\begin{example}[The co/kusarigama of the identity functor]\label{coku_of_id}
	Let $\clC$ be a complete and cocomplete category (so that the co/ends in \cref{con:constructing-cokusarigama} and \cref{con:constructing-kusarigama} exist). 
	
	We want to compute the co/kusarigama of the identity functor $\id_{\pq{\clC}} : \pq{\clC} \to \pq{\clC}$. By virtue of the universal property of the product category $\pq{\clC}$, it is then enough to determine the functor 
	\[
	\begin{tikzcd}
		\pq{\clC} \ar[r, "\ckgpq{p}{q}{\id}"] & \pq{\clC}[q][p] \ar[r, "\pi_j"] & \clC^{\pm}
	\end{tikzcd}
	\]
	where the functor $\pi_j$ projects to the factor $\clC$ for $1\le j\le q$, and to $\clC^\op$ for $p+1\le j\le q+p$.

	In case $(p,q)=(2,1)$ one sees that for objects $(X_1,X_2,Y)$ the diagram 
	\[
	\begin{tikzcd}
        \displaystyle\coprod_{\mathclap{f\colon B\to A}}\left(\ph^{X_{1}}_{B}\times\ph^{X_{2}}_{B}\times\ph^{A}_{Y}\right)\odot B
		\ar[d, shift left=.5em, "\alpha"]
		\ar[d, shift right=.5em, "\beta"'] \\
        \displaystyle\coprod_{\mathclap{A\in\clC}}\left(\ph^{X_{1}}_{A}\times\ph^{X_{2}}_{A}\times\ph^{A}_{Y}\right)\odot A 
		\ar[d, "c"] & \left(\var[u]{X_1}{A},\var[v]{X_2}{A},\var[w]{A}{Y},a\right)\ar[d, mapsto]\\ 
        (\ph^{X_{1}}_{Y}\times\ph^{X_{2}}_{Y})\odot Y & (w\circ u,w\circ v,w\circ a)
	\end{tikzcd}	
	\]
	commutes and in fact that it is a coequaliser: every other $\zeta : \coprod_{A\in\clC} \clC(X_1,A)\times \clC(X_2,A)\times \clC(A,Y) \odot A \to E$ coequalising the pair $(\alpha,\beta)$ must factor through $\clC(X_1,Y)\times \clC(X_2,Y)\odot Y$ with a uniquely determined map.

	A standard argument, carried over the general case, to find the coequaliser defining the end and coend in \cref{con:constructing-cokusarigama} and \cref{con:constructing-kusarigama} now yields
	\[
		\ckgpq{p}{q}{\id}(\uX,\uY) = \hom_{\Pi,p,q}(\uY,\uX)\odot (\uY,\uX),\qquad\quad 
	\kgpq{p}{q}{\id}(\uX,\uY) = \hom_{\Pi,p,q}(\uX,\uY)\pitchfork (\uY,\uX).
	\]
\end{example}
\begin{remark}	
	The previous argument hides a technical point. It holds by virtue of the following fact: if two categories $\clA,\clB$ are co/tensored over $\Set$, then so is their product $\clA\times \clB$, with the component-wise action of a functor $\odot : \Set \times \clA\times\clB \to \clA\times \clB$.

	A similar result does \emph{not} hold for a generic base of enrichment.
\end{remark} 
\subsection{Higher arity twisted arrow categories}\label{higher-arity-twisted-arrow-categories}
Classically, it is possible to compute the co/end of a diagram $D\colon\CatFont{C}^{\op}\times\CatFont{C}\longrightarrow\CatFont{D}$ as the co/limit of $D$ over the \emph{twisted arrow category} $\Tw{\clC}$ of $\clC$, i.e.\ over the category of elements of the $\hom$ functor of $\CatFont{C}$. The purpose of this section is to formulate and prove an analogous description for higher arity co/ends.

In this section, we abbreviate $\ckgpq{p}{q}{\Delta_{\pt}}$ as $\ckgpq{p}{q}{\pt}$.
\begin{definition}\label{p-q-twisted-category}%
    The \emph{$(p,q)$-twisted arrow category of $\CatFont{C}$} is the category $\pqTw{p}{q}{\clC}$ defined as the category of elements of $\ckgpq{p}{q}{\pt}$:
    \begin{center}
        \begin{tikzcd}[row sep={3.6em,between origins}, column sep={7.2em,between origins}, background color=backgroundColor, ampersand replacement=\&]
            \pqTw{p}{q}{\clC}
            \ar[d,two heads]
            \ar[r, "\Forgetful_{(p,q)}",two heads]
            \&
            \pq{\clC}
            \ar[d,"\ckgpq{p}{q}{\pt}"]
            \\
            \catpt
            \ar[ur,Rightarrow, shorten <=2em, shorten >=2em]
            \ar[r,"\ceil{\pt}"',hook]
            \&
            \Set.
        \end{tikzcd}
    \end{center}%
\end{definition}
\begin{remark}[Unwinding \cref{p-q-twisted-category}]
    By the calculation in the proof of \cref{ponk}, we have $\ckgpq{p}{q}{\pt}\cong\Hom_{\Pi,p,q}$. As a result, we see that $\pqTw{p}{q}{\CatFont{C}}$ may be described as the category whose
    \begin{enumtag}{kcc}
        \item Objects are collections $\left\{f_{ij}\colon A_{i}\longrightarrow B_{j}\right\}$ of morphisms of $\clD$ with $0\leq i\leq p$ and $0\leq j\leq q$;
        \item Morphisms are collections of factorisations of the codomain through the domain, of the form
              \[
                  \begin{tikzcd}[row sep={4.5em,between origins}, column sep={4.5em,between origins}, background color=backgroundColor, ampersand replacement=\&]
                      A_{i}
                      \arrow[r,"f"]
                      \&
                      B_{j}
                      \arrow[d,"\psi_{j}"]
                      \\
                      A_{i}'
                      \arrow[u,"\phi_{i}"]
                      \arrow[r,"g"']
                      \&
                      B_{j}',
                  \end{tikzcd}
              \]
              one for each $0\leq i\leq p$ and each $0\leq j\leq q$.
    \end{enumtag}
\end{remark}
\begin{lemma}\label{pq-co-ends-as-weighted-co-limits}
    Let $D\colon\CatFont{C}^{(p,q)}\longrightarrow\CatFont{D}$ be a diagram. We have natural isomorphisms
	\[
        \pqCoend{p}{q}{A\in\CatFont{C}} D^{\bsA}_{\!\bsA} \cong \wcolim{\ckgpq{p}{q}{\pt}}(D)
        \qquad\qquad
        \pqEnd{p}{q}{A\in\CatFont{C}}	D^{\bsA}_{\!\bsA} \cong \wlim{\ckgpq{p}{q}{\pt}}(D),
	\]
	generalising the well-known isomorphisms
	\[
        \int^{A\in\CatFont{C}} D^{A}_{A}    \cong \wcolim{\hom_{\CatFont{C}}}(D)
        \qquad\qquad
        \int_{A\in\CatFont{C}} D^{A}_{A}    \cong \wlim{\hom_{\CatFont{C}}}(D),
	\]
    valid for $(p,q)=(1,1)$.
\end{lemma}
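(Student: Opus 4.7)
The plan is to observe that the statement reduces to a direct combination of two results already at our disposal: Corollary \ref{pq_are_weighted}, which identifies $(p,q)$-co/ends as $\hom_{\Pi,p,q}$-weighted co/limits, and the chain of isomorphisms establishing the identity
\[
\ckgpq{p}{q}{\pt} \;\cong\; \hom_{\Pi,p,q}.
\]
Given this, the proof amounts essentially to a substitution, so the work is in exhibiting the isomorphism displayed above and verifying that the identifications are natural in $D$.

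For the identification $\ckgpq{p}{q}{\pt}\cong\hom_{\Pi,p,q}$, I would proceed as follows. First, I would apply Construction \ref{con:constructing-cokusarigama} to the constant functor $\Delta_{\pt}\colon\pq{\clC}\to\Sets$, which gives
\[
\ckgpq{p}{q}{\pt} \cong \pqCoend{p}{q}{A\in\clC}\bigl(\ph^{-}_{\bsA}\times\ph^{\bsA}_{-}\bigr)\odot\pt \cong \pqCoend{p}{q}{A\in\clC}\ph^{-}_{\bsA}\times\ph^{\bsA}_{-}.
\]
Comparing this with Remark \ref{computing-lan-delta-q-p-ph}, which computes $\Lan_{\Delta_{q,p}}\ph$ as precisely the same coend expression (see \eqref{eq:ponk-2}), and then invoking Lemma \ref{ponk} to identify $\Lan_{\Delta_{q,p}}\ph\cong\hom_{\Pi,p,q}$, yields the desired natural isomorphism of functors $\pq{\clC}\to\Sets$.

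With this identification in hand, the two natural isomorphisms of the statement follow immediately from Corollary \ref{pq_are_weighted}:
\[
\pqEnd{p}{q}{A\in\clC}D^{\bsA}_{\!\bsA} \cong \wlim{\hom_{\Pi,p,q}}D \cong \wlim{\ckgpq{p}{q}{\pt}}D,
\]
and dually for coends, where naturality in $D$ is inherited from the functoriality of weighted limits in the diagram. The classical case $(p,q)=(1,1)$ is recovered by noting that $\ckgpq{1}{1}{\pt}\cong\hom_{\Pi,1,1}=\hom_{\clC}$, which is the usual weighting in the end formula $\int_{A}D^{A}_{A}\cong\wlim{\hom_\clC}(D)$.

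The only subtlety I anticipate is bookkeeping: one must be careful that the variance conventions align between Construction \ref{con:constructing-cokusarigama} (where the cokusarigama of a functor of type $\typepq{p}{q}$ produces a functor of type $\typepq{q}{p}$) and Corollary \ref{pq_are_weighted} (where the weight $\hom_{\Pi,p,q}$ has type $\typepq{p}{q}$). This is where the swap between the exponents in the final display arises naturally. Apart from this indexing verification, there is no genuine obstacle, as all the heavy lifting has been carried out in the coend-computation of Lemma \ref{ponk} and the adjunction-based derivation of Corollary \ref{pq_are_weighted}.
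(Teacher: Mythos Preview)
Your proof is correct, but it takes a more circuitous route than the paper's. The paper argues directly from the universal property of the cokusarigama: by definition $\ph\big(-,\pqEnd{p}{q}{A}D^{\bsA}_{\bsA}\big)\cong\pqDiNat{p}{q}(\Delta_{\pt},\ph_{D})$, and the defining property of $\ckgpq{p}{q}{\pt}$ (\cref{co-kusarigama-from-dinat-to-nat}) converts this to $\Nat(\ckgpq{p}{q}{\pt},\ph_{D})$, which is the representable for $\wlim{\ckgpq{p}{q}{\pt}}(D)$. No explicit computation of $\ckgpq{p}{q}{\pt}$ is needed.

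Your approach instead first identifies $\ckgpq{p}{q}{\pt}\cong\hom_{\Pi,p,q}$ via \cref{con:constructing-cokusarigama} and \cref{ponk}, and then invokes \cref{pq_are_weighted}. This is valid, but notice that the proof of \cref{pq_are_weighted} is essentially the same three-line argument the paper gives here, just with $\hom_{\Pi,p,q}$ written in place of $\ckgpq{p}{q}{\pt}$; so you are effectively running the paper's argument once to get \cref{pq_are_weighted}, then separately computing the cokusarigama of $\pt$, then substituting. The paper's direct route avoids this redundancy. One minor imprecision: when you say the $(p,q)$-coend in \cref{con:constructing-cokusarigama} is ``precisely the same coend expression'' as \eqref{eq:ponk-2}, you should note that one must first invoke \cref{p-q-ends-as-ordinary-ends} to rewrite the $(p,q)$-coend as an ordinary coend before the expressions literally match.
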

\begin{proof}%
    We have
    \begin{align*}
        \ph\left(-,\pqEnd{p}{q}{A\in\CatFont{C}}D^{\bsA}_{\!\bsA}\right) &\cong \DiNat(\Delta_{\pt},\ph_{D})\\
                                                                         &\cong \Nat(\ckgpq{p}{q}{\pt},\ph_{D})\\
                                                                         &\cong h_{\wlim{\ckgpq{p}{q}{\pt}}(D)}.
    \end{align*}
    The proof of the remaining isomorphism is formally dual to the above one.
\end{proof}
\begin{proposition}[$(p,q)$-Ends as limits, yet again]\label{p-q-ends-from-p-q-tw-c}%
	Let $D : \pq{\clC}\longrightarrow\clD$ be a functor.
    We have isomorphisms
		\begin{align*}
			\pqEnd{p}{q}{A\in\clC}D^{\bsA}_{\bsA}   &\cong \lim\Big(\pqTw{p}{q}{\clC}\xloongrightarrow{\Forgetful_{(p,q)}}\pq{\clC}\xloongrightarrow{D}\clD\Big),               \\
			\pqCoend{p}{q}{A\in\clC}D^{\bsA}_{\bsA} &\cong \colim\Big(\pqTw{p}{q}{\clC^{\op}}^{\op}\xloongrightarrow{\Forgetful_{(p,q)}}\pq{\clC}\xloongrightarrow{D}\clD\Big).
		\end{align*}
\end{proposition}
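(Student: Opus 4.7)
The plan is to reduce both identities to the standard Grothendieck-construction description of a $\Sets$-valued weighted co/limit as an ordinary co/limit over the (opposite of the) category of elements of its weight; once this reduction is made, identifying the relevant category of elements with the higher arity twisted arrow category is immediate from \cref{p-q-twisted-category}.

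First I would handle the end case. The idea is to combine the weighted-limit presentation
\[
\pqEnd{p}{q}{A\in\clC}D^{\bsA}_{\bsA}\cong\wlim{\ckgpq{p}{q}{\pt}}(D)
\]
furnished by \cref{pq-co-ends-as-weighted-co-limits} with the classical formula $\wlim{W}(D)\cong\lim\bigl(\mathrm{El}(W)\xrightarrow{\pi}\pq{\clC}\xrightarrow{D}\clD\bigr)$, valid for any $\Sets$-valued weight $W$ on $\pq{\clC}$. The right-hand side is then pinned down by \cref{p-q-twisted-category}: by definition, the category of elements of $\ckgpq{p}{q}{\pt}$ is $\pqTw{p}{q}{\clC}$, and the canonical projection to $\pq{\clC}$ is precisely $\Forgetful_{(p,q)}$. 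Chaining these three isomorphisms gives the end identity.

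For the coend, I would proceed by a formal dualisation. Observe that a $(p,q)$\hyp{}coend in $\clD$ may be computed as a $(p,q)$\hyp{}end in $\clD^{\op}$ of the induced diagram $D^{\op}\colon\pq{\clC^{\op}}\to\clD^{\op}$ (using $\pq{\clC^{\op}}\cong\pq{\clC}^{\op}$). Applying the end case, now already known, to $\clC^{\op}$ and $D^{\op}$ yields a limit over $\pqTw{p}{q}{\clC^{\op}}$ in $\clD^{\op}$; re-reading this back in $\clD$ produces the announced colimit over $\pqTw{p}{q}{\clC^{\op}}^{\op}$ of $D\circ\Forgetful_{(p,q)}$.

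The main obstacle, such as it is, will be purely bookkeeping with variances: one must verify that the identification $\ckgpq{p}{q}{\pt}\cong\hom_{\Pi,p,q}$ is read on the correct side so that the weighted-limit-as-Grothendieck-construction formula applies cleanly, and that the category of elements really does reproduce the object-and-morphism description expected of $\pqTw{p}{q}{\clC}$ (compatible tuples $\{f_{ij}\colon A_i\to B_j\}_{1\le i\le p,\,1\le j\le q}$ with the factorisation-type morphisms, projecting to $\pq{\clC}$ via $\Forgetful_{(p,q)}$). No new ideas are needed beyond assembling already-stated material.
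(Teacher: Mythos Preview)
Your proposal is correct and follows essentially the same route as the paper: invoke \cref{pq-co-ends-as-weighted-co-limits} to present the $(p,q)$\hyp{}co/end as a weighted co/limit by $\ckgpq{p}{q}{\pt}$, then apply the standard reduction of a $\Sets$-weighted co/limit to an ordinary co/limit over the category of elements of the weight, which by \cref{p-q-twisted-category} is exactly $\pqTw{p}{q}{\clC}$. The only cosmetic difference is that the paper treats the coend case by citing the dual classical formula directly, whereas you pass through $\clD^{\op}$; both arguments are equivalent.
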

\begin{proof}%
    This result follows from \cref{pq-co-ends-as-weighted-co-limits} and the classical description of weighted colimits as conical ones \cite[Section 3.4, Equation 3.33]{kelly}.
\end{proof}
\subsection{Twisted arrow categories associated to cokusarigama}\label{kusarigama-twisted-categories}
In this short section, we give a co/comma category formula for computing co/kusarigama. These generalise the construction in \cref{higher-arity-twisted-arrow-categories} and work for arbitrary $(p,q)$. However, these turn out to be too complicated for $p,q\geq2$ as to be practically useful%
\footnote{%
	Similarly to how a morphism of $\pqTw{p}{q}{\clC}$ turned out to involve $pq$ arrows of $\clC$, unravelling the construction given in this section for arbitrary $(p,q)$ gives a category $\mathsf{Tw}^{(p,q)}(\clC)$ whose morphisms now consist of $4pq$ morphisms of $\clC$. Additionally, \emph{each of these} points now in a different directions (i.e.\ they cannot anymore be arranged as morphisms in product categories). Together, these two points make $\mathsf{Tw}^{(p,q)}(\clC)$ unusable in practice when $p$ and $q$ are too large. As a compromise, we work out the case $(p,q)=(1,1)$, which is both the simplest case as well as the most useful one.
}, %
so we restrict our attention to the case $(p,q)=(1,1)$ below. Let $F\colon\clC^{\op}\times\clC\to \clD$ be a functor and fix $A,B\in\clC_o$.
\begin{definition}\label{twckgc}%
	The \emph{twisted arrow category of $\clC$ for $(1,1)$-cokusarigama at $(A,B)$} is the category $\Twckg{A}{B}{\clC}$ defined as the category of elements of $\ph^{A}_{B}\times\ph^{A}_{-_{2}}\times\ph^{-_{1}}_{B}\times\ph^{-_{1}}_{-_{2}}$.
\end{definition}
\begin{remark}%
	Concretely, $\Twckg{A}{B}{\clC}$ may be described as the category whose
	\begin{enumtag}{kcc}
		\item Objects are squares of the form%
		\[
			\begin{tikzcd}[row sep={4.5em,between origins}, column sep={4.5em,between origins},  ampersand replacement=\&]
				X
				\arrow[r,"\phi"]
				\arrow[d,"f"']
				\&
				B
				\\
				Y
				\&
				A
				\arrow[l,"\psi"]
				\arrow[u,"g"']
			\end{tikzcd}
		\]
		with $X,Y\in\clC_o$ and $f,g,\phi,\psi\in\Mor(\clC)$;
		\item Morphisms are twisted commutative cubes
		\[
			\begin{tikzcd}[row sep={3.6em,between origins}, column sep={3.6em,between origins},  ampersand replacement=\&]
				X
				\arrow[rr,"\phi"]
				\arrow[dd,"f"']
				\&
				\&
				B
				\arrow[from=dd,"g"'description,pos=0.2]
				\&
				\\
				\&
				X'
				\arrow[rr,"\phi'"description,crossing over]
				\arrow[lu,"\xi_{1}"description]
				\&
				\&
				B
				\arrow[lu,"\xi_{3}"description]
				\\
				Y
				\arrow[rd,"\xi_{2}"description]
				\&
				\&
				A
				\arrow[ll,"\psi"',pos=0.2]
				\arrow[rd,"\xi_{4}"description]
				\&
				\\
				\&
				Y'
				\arrow[from=uu,"f'"'description,crossing over]
				\&
				\&
				A
				\arrow[ll,"\psi'"]
				\arrow[uu,"g'"']
			\end{tikzcd}
		\]%
	\end{enumtag}
\end{remark}
\begin{remark}[$\Twckg{A}{B}{\clC}$ as a generalisation of the twisted arrow category]
	The twisted arrow category of $\clC$ naturally fits inside $\Twckg{A}{B}{\clC}$:
	\[
		\begin{tikzcd}[row sep={4.5em,between origins}, column sep={4.5em,between origins},  ampersand replacement=\&]
			\textcolor{OIvermillion}{W}
			\arrow[r, "f",OIvermillion]
			\arrow[d, "p"',leftarrow,OIvermillion]
			\&
			\textcolor{OIvermillion}{X}
			\\
			\textcolor{OIvermillion}{Y}
			\arrow[r, "g"',OIvermillion]
			\&
			\textcolor{OIvermillion}{Z}
			\arrow[u, "q"',leftarrow,OIvermillion]
		\end{tikzcd}
		\begin{tikzcd}[row sep={3.6em,between origins}, column sep={3.6em,between origins},  ampersand replacement=\&]
			{}
			\arrow[r,squiggly]
			\&
			{}
		\end{tikzcd}
		\begin{tikzcd}[row sep={2.7em,between origins}, column sep={2.7em,between origins},  ampersand replacement=\&]
			\textcolor{OIvermillion}{W}
			\arrow[rr,gray!40]
            \arrow[dd,"f"',gray!40,OIvermillion]
			\&
			\&
			\textcolor{gray!40}{\bullet}
            \arrow[from=dd,gray!40]
			\&
			\\
			\&
			\textcolor{OIvermillion}{Y}
			\arrow[rr,gray!40,crossing over]
			\arrow[lu,"p"',OIvermillion]
			\&
			\&
			\textcolor{gray!40}{\bullet}
			\arrow[lu,gray!40]
			\\
			\textcolor{OIvermillion}{X}
			\arrow[rd,"q"',OIvermillion]
			\&
			\&
			\textcolor{gray!40}{\bullet}
			\arrow[ll,gray!40]
			\arrow[rd,gray!40]
			\&
			\\
			\&
			\textcolor{OIvermillion}{Z}
			\arrow[from=uu,gray!40,"g"description,OIvermillion]
			\&
			\&
			\textcolor{gray!40}{\bullet}\mrp{.}
			\arrow[ll,gray!40]
            \arrow[uu,gray!40]
		\end{tikzcd}
	\]%
	This comes from the identity $\ckg{\pt}\cong\Hom$.
\end{remark}
\begin{proposition}[Co/kusarigama as limits]%
	Given a functor $D\colon\clC^{\op}\times\clC\to \clD$, we have isomorphisms
	\begin{align*}
		\ckg{D}^{A}_{B} & \cong \colim\left(\Twckg{A}{B}{\clC}\xlongertwoheadsrightarrow{\mathrm{pr}}\clC^{\op}\times\clC\xto {D}\clD\right), \\
		\kg{D}^{A}_{B}  & \cong \lim\left(\Twckg{A}{B}{\clC}\xlongertwoheadsrightarrow{\mathrm{pr}}\clC^{\op}\times\clC\xto {D}\clD\right).
	\end{align*}
\end{proposition}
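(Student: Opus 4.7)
The plan is to reduce both isomorphisms to the coend and end formulas provided by \cref{con:constructing-cokusarigama} and \cref{con:constructing-kusarigama} specialised to $(p,q)=(1,1)$, and to convert these into conical co/limits over $\Twckg{A}{B}{\clC}$ via the category of elements. This mirrors the strategy used in the proof of \cref{p-q-ends-from-p-q-tw-c}: compute the co/end, recognise it as a weighted co/limit, and then realise that weighted co/limit as a conical one.

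First, specialising \cref{con:constructing-cokusarigama} to $(p,q)=(1,1)$ and evaluating at $(A,B)\in\clC^{\op}\times\clC$, I obtain the coend presentation
\[
\ckg{D}^{A}_{B} \cong \int^{X\in\clC}\hom_\clC(A,X)\times\hom_\clC(X,B)\odot D^{X}_{X},
\]
and a dual end-with-cotensor formula for $\kg{D}^{A}_{B}$ from \cref{con:constructing-kusarigama}. This already exhibits $\ckg{D}^A_B$ as a $(1,1)$-coend of the functor $(X,Y)\mapsto \hom(A,Y)\times\hom(X,B)\odot D^{X}_{Y}$ along the diagonal.

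Second, by the $(1,1)$-case of \cref{pq-co-ends-as-weighted-co-limits}, this coend is nothing but a weighted colimit of $D$ whose weight collects the four factors $\ph^{A}_{B}$, $\ph^{A}_{-_{2}}$, $\ph^{-_{1}}_{B}$, $\ph^{-_{1}}_{-_{2}}$ appearing in \cref{twckgc} (the diagonal factor $\ph^{-_{1}}_{-_{2}}$ being the one that collapses the product $(X,Y)$-coend down to the single-variable coend above via co-Yoneda). Applying the standard description of weighted co/limits as conical ones indexed by the category of elements of the weight then identifies the resulting diagram with $\mathrm{pr}\colon\Twckg{A}{B}{\clC}\to\clC^{\op}\times\clC$, proving the first isomorphism. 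The second is entirely dual: one uses the end presentation from \cref{con:constructing-kusarigama}, writes it as a weighted limit, and realises this as a conical limit over the same $\Twckg{A}{B}{\clC}$ (since forming the category of elements of the weight yields the same indexing category in either case).

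The main obstacle I anticipate is Step 3, namely matching the morphism structure of $\Twckg{A}{B}{\clC}$ (the \emph{twisted commutative cubes} of the remark following \cref{twckgc}, with the four arrows $\xi_{1},\xi_{2},\xi_{3},\xi_{4}$) against the morphisms produced by the category-of-elements construction on the displayed weight. This is a bookkeeping exercise that must carefully track how the diagonal factor $\ph^{-_{1}}_{-_{2}}$ and the constant factor $\ph^{A}_{B}$ interact with the wedge relations of the underlying coend, but it is the only delicate step; once it is settled, both claimed isomorphisms are immediate from the chain coend $\Rightarrow$ weighted colimit $\Rightarrow$ conical colimit (and its dualisation for the kusarigama).
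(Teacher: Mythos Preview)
Your overall architecture is the same as the paper's: express $\ckg{D}^{A}_{B}$ as a certain co/end, recognise it as a weighted co/limit of $D$, and then pass to a conical co/limit over the category of elements of that weight. The last step is identical to the paper's, and your discussion of the ``bookkeeping'' in matching morphisms of $\Twckg{A}{B}{\clC}$ is fine.

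The gap is in your middle step. You invoke \cref{pq-co-ends-as-weighted-co-limits} to say that the $(1,1)$-coend of $T(X,Y)=\ph^{A}_{Y}\times\ph^{X}_{B}\odot D^{X}_{Y}$ is a weighted colimit \emph{of $D$} by the four-factor weight. But that lemma only tells you that the $(1,1)$-coend of $T$ is $\wcolim{\Hom_{\clC}}(T)$, a weighted colimit of $T$, not of $D$; and your parenthetical about co-Yoneda does not explain how the constant factor $\ph^{A}_{B}$ appears in the weight. The paper closes this gap differently: it first observes that $\ckg{D}^{A}_{B}$ is the \emph{weighted coend} $\wCoend[W]{X}D^{X}_{X}$ with $W=\ph^{A}_{-_{2}}\times\ph^{-_{1}}_{B}$, then uses the defining isomorphism $\DiNat(W,-)\cong\Nat(\ckg{W},-)$ of the cokusarigama to identify this as the weighted colimit $\wcolim{\ckg{W}}(D)$, and finally computes $\ckg{W}\cong\ph^{A}_{B}\times\ph^{A}_{-_{2}}\times\ph^{-_{1}}_{B}\times\ph^{-_{1}}_{-_{2}}$ via the coequaliser argument of \cref{ponk}. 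That second move---passing from a weighted \emph{coend} by $W$ to a weighted \emph{colimit} by $\ckg{W}$---is the clean conceptual step you are missing; it is exactly what the co/kusarigama adjunction is for, and it is what produces all four hom-factors (including $\ph^{A}_{B}$) at once.
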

\begin{proof}%
	Firstly, observe that we may compute $\ckg{D}$ as the following weighted coend:
	\begin{align*}
		\wCoend[\ph^{A}_{-_{2}}\times\ph^{-_{1}}_{B}]{X\in\CatFont{C}}D^{X}_{X} & \cong \int^{X\in\CatFont{C}}\left(\ph^{A}_{X}\times\ph^{X}_{B}\right)\odot D^{X}_{X} \\
		                                                                        & \cong \ckg{D}^{A}_{B}.
	\end{align*}
	Now, weighted coends corepresent functors of the form $\DiNat(W,\ph^{D})$, but since
	\[ \DiNat(W,\ph^{D})\cong\Nat(\ckg{W},\ph^{D}), \]
	we see that the above weighted coend is the weighted colimit of $D$ by $\ckg{\ph^{A}_{-_{2}}\times\ph^{-_{1}}_{B}}$. From the computation in the proof of \cref{ponk}, we have $\ckg{\ph^{A}_{-_{2}}\times\ph^{-_{1}}_{B}}\cong\ph^{A}_{B}\times\ph^{A}_{-_{2}}\times\ph^{-_{1}}_{B}\times\ph^{-_{1}}_{-_{2}}$. The result then follows from the classical description of weighted colimits as conical ones \cite[Section 3.4, Equation 3.33]{kelly}.

	The second formula is proved in a dual fashion.
\end{proof}

\printbibliography
\end{document}